\newtheorem{theorem}{Theorem}[section]
\newtheorem*{theorem*}{Theorem B}
\newtheorem{lemma}[theorem]{Lemma}
\newtheorem{proposition}[theorem]{Proposition}
\newtheorem{corollary}[theorem]{Corollary}
\newtheorem*{definition*}{Definition}
\newtheorem*{observation*}{Observation}
\newtheorem*{assumption*}{Assumption}
\newtheorem*{question*}{Question}
\newtheorem*{problem*}{Problem}
\theoremstyle{remark}
\newtheorem*{remark*}{Remark}
\newcommand{\R}{\mathbb{R}}
\newcommand{\N}{\mathbb{N}}
\newcommand{\E}{\mathbb{E}}
\newcommand{\V}{\mathcal{V}}
\newcommand{\spann}{\mathrm{span}}
\newcommand{\supp}{\mathrm{supp}}
\newcommand{\sgn}{\mathrm{sgn}}
\newcommand{\an}{\text{\, and \,}}
\begin{document}

\title[Metric projections onto convex cones]{The metric projections onto closed convex cones in a Hilbert space}

\author
{Yanqi Qiu}
\address
{Yanqi QIU: Institute of Mathematics and Hua Loo-Keng Key Laboratory of Mathematics, AMSS, Chinese Academy of Sciences, Beijing 100190, China.}
\email{yanqi.qiu@amss.ac.cn, yanqi.qiu@hotmail.com}

\author{Zipeng Wang}
\address{Zipeng WANG: College of Mathematics and Statistics, Chongqing University, Chongqing,
401331, P.R.China}
\email{zipengwang2012@gmail.com, zipengwang@cqu.edu.cn}

\thanks{Y. Qiu is supported by grants NSFC Y7116335K1,  NSFC 11801547 and NSFC 11688101 of National Natural Science Foundation of China. Z. Wang is supported by NSFC 11601296 of National Natural Science Foundation of China.}

\begin{abstract}
We study the metric projection onto  the closed convex cone in a real Hilbert space $\mathscr{H}$ generated by a sequence $\mathcal{V} = \{v_n\}_{n=0}^\infty$. The first main result of this paper provides a sufficient condition under which we can identify the closed convex cone generated by  $\mathcal{V}$  with the following set: 
\[
\mathcal{C}[[\mathcal{V}]]:  = \bigg\{\sum_{n=0}^\infty a_n v_n\Big|a_n\geq 0,\text{ the series }\sum_{n=0}^\infty a_n v_n\text{ converges in $\mathscr{H}$}\bigg\}.
\]
Then, by adapting classical results on general convex cones,  we give a useful description of the metric projection of a vector onto $\mathcal{C}[[\V]]$. As applications, we obtain the best approximations of many concrete functions in $L^2([-1,1])$ by polynomials with non-negative coefficients.
\end{abstract}

\subjclass[2010]{Primary 52A27; Secondary 41A10, 46C05}
\keywords{Closed convex cones; metric projections; best approximation; polynomials with non-negative coefficients.}

\maketitle

\setcounter{equation}{0}

\section{Introduction}

\subsection{Closed convex cone generated by a sequence of vectors}

Recall that a non-empty subset of a vector space over the field $\R$ of real numbers is a {\it convex cone} if it is closed under addition and closed under multiplication by non-negative scalars.  It is a classical problem to find the best approximation of a given vector by elements in a given closed convex subset, cf. \cite{Deutsch-2001,Domokos-2017,Edwards-1965,Jurkat-Lorentz-1961,
Roger-Bertram-1998,Toland-1996,Zarantonello-1971}.  The best approximation of functions by polynomials with non-negative coefficients on certain interval is particularly interesting, for instance,  it plays a crucial role in the spectral analysis of self-adjoint operators on real Hilbert spaces \cite{Roger-Bertram-1998,Toland-1996}.

In this paper, we are interested in the convex cone generated by a given sequence in a real Hilbert space.  More precisely,  let $\mathscr{H}$ be a Hilbert space over $\R$ and let $\V = \{v_n\}_{n=0}^{\infty}$ be a sequence in $\mathscr{H}$. The convex cone $\mathcal{C}[\V]$ generated by the sequence $\V$ is the set of all non-negative linear combinations of the vectors in $\V$ (here and after, we denote by $\N$ the set of all non-negative integers: $\N= \{0, 1, 2, \cdots\}$):
\[
\mathcal{C}[\V]:=\bigg\{\sum_{n=0}^N a_n v_n\Big|N \in \N, a_n\geq 0 \bigg\}.
\]
Denote by $\overline{\mathcal{C}}[\V]$ the closure of $\mathcal{C}[\V]$ inside $\mathscr{H}$. Then  $\overline{\mathcal{C}}[\V]$ is again a convex cone and moreover is closed.  The convex cone $\mathcal{C}[\V]$ and its closure $\overline{\mathcal{C}}[\V]$ are useful objects in functional analysis, mathematical optimization and many other fields, cf. \cite{Asimow-1980,Beck-2020,Edwards-1965,Stoer-1970}. One can consult \cite{Davis-1954,McKinney-1962} for the basic algebraic theory of $\mathcal{C}[\V]$ when $\V$ is a finite set of a finite dimensional Euclidean space and of an infinite dimensional Hilbert space respectively.  Usually, it is more convenient to work with the closure  $\overline{\mathcal{C}}[\V]$. In fact,  the closedness of a convex subset is important in the best approximation theory \cite[Chapter 5]{Deutsch-2001}.

 For any element $w\in \mathscr{H}$, let $d(w, \overline{\mathcal{C}}[\V])$ denote the distance of $w$ to  $\overline{\mathcal{C}}[\V]$:
\[
d(w,\overline{\mathcal{C}}[\V]):= \inf\bigg\{\|w-u \|\Big| u \in \overline{\mathcal{C}}[\V] \bigg\} = \inf\bigg\{\Big\|w-\sum_{n=0}^N a_nv_n\Big\|\Big|N\in \N,  a_n\geq 0\bigg\}.
\]
For a non-zero vector $w\in \mathscr{H}$, we also introduce  the relative distance $\lambda(w, \overline{\mathcal{C}}[\V])$:
\[
\lambda(w, \overline{\mathcal{C}}[\V]): = \frac{d(w, \overline{\mathcal{C}}[\V])}{\|w\|} \in [0, 1].
\]
Clearly, $\lambda(w, \overline{\mathcal{C}}[\V])= 1$ if and only if $P_{\overline{\mathcal{C}}[\V]}(w)=0$, while $\lambda(w, \overline{\mathcal{C}}[\V])= 0$ if and only if $w \in \overline{\mathcal{C}}[\V]$.  Denote by $\angle(w, v)$ the angle between two non-zero vectors $w,v$. If $\lambda(w, \overline{\mathcal{C}}[\V])< 1$, then  it is easy to see that it satisfies the equality
\[
\lambda(w, \overline{\mathcal{C}}[\V]) = \sin\left(\inf \bigg\{ \angle (w, v): v\in \overline{\mathcal{C}}[\V] \setminus \{0\} \bigg\}\right).
\]
In other words, the quantity $\lambda(w, \overline{\mathcal{C}}[\V])$, when belongs to the open interval $(0,1)$, mesures how far the direction of the vector $w$ is away from the directions of the vectors in $\overline{\mathcal{C}}[\V]$.

By a classical result on closed convex subsets of Hilbert spaces (see e.g. \cite[p. 239]{Zarantonello-1971} and \cite[Theorem 3.5]{Deutsch-2001}), for any $w\in \mathscr{H}$, there exists a {\it unique} $w^*\in \overline{\mathcal{C}}[\V]$ closest to $w$:
\[
\| w - w^*\| =  d(w, \overline{\mathcal{C}}[\V]).
\]
 This unique element $w^*$ is   called the {\it metric projection} of $w$ onto the closed convex cone $\overline{\mathcal{C}}[\V]$ and will be denoted by $P_{\overline{\mathcal{C}}[\V]}(w)$. In most situations, the computation of $d(w, \overline{\mathcal{C}}[\V])$ or $\lambda (w, \overline{\mathcal{C}}[\V])$ is then reduced to the characterization of   $P_{\overline{\mathcal{C}}[\V]}(w)$.

For studying the best approximation of a given vector by elements in $\overline{\mathcal{C}}[\V]$, one may first try to  understand better the closed convex cone $\overline{\mathcal{C}}[\V]$. However, in general, $\overline{\mathcal{C}}[\V]$ can be quite complicated. Therefore, in some particular cases,  it seems to be of independent interests to find an explicit description of $\overline{\mathcal{C}}[\V]$.  The first aim of this paper is to present an explicit description of $\overline{\mathcal{C}}[\V]$ under some additional assumptions on  $\V$.

Let us introduce a subset $\mathcal{C}[[\V]]$ as follows:
\[
\mathcal{C}[[\V]]:=\bigg\{\sum_{n=0}^\infty a_n v_n\Big|\text{$a_n\geq 0$ and the series \,}\sum_{n=0}^\infty a_n v_n\text{ converges in $\mathscr{H}$}\bigg\}.
\]
 We shall also denote $\mathcal{C}[[\V]]$ by $\mathcal{C}_{\mathscr{H}}[[\V]]$ when it is necessary. Clearly, we have 
\begin{align}\label{p-s-c}
\mathcal{C}[\V] \subset  \mathcal{C}[[\V]] \subset\overline{\mathcal{C}}[\V].
\end{align}
\begin{remark*}
In general,  the definition of $\mathcal{C}[[\V]]$ depends on the order of the vectors in the sequence $\V$. Indeed, the requirement that  the series $\sum_{n=0}^\infty a_n v_n$ converges in $\mathscr{H}$ in general does not imply that it converges unconditionally and hence the limit may depend on the order.
\end{remark*}

The  subset $\mathcal{C}[[\V]]$ defined as above is again a convex cone and by \eqref{p-s-c}, it is closed if and only if $\mathcal{C}[[\V]] =  \overline{\mathcal{C}}[\V]$.
 Therefore, if $\mathcal{C}[[\V]]$ is closed, then we obtain a  relatively simple description of $\overline{\mathcal{C}}[\V]$: all elements of $\overline{\mathcal{C}}[\V]$ are given as a convergent series whose terms are multiples of elements in $\V$.

However, in general $\mathcal{C}[[\V]]$ is not closed. For instance,  consider the two-dimensional Euclidean space $\R^2$ and let $v_n = (1, n)\in \R^2$ for $n \in\N$.  Then $\overline{\mathcal{C}}[\V] = \{(x,y)| x, y \ge 0\}$. But $\mathcal{C}[[\V]] \neq \overline{\mathcal{C}}[\V]$ since $(0, 1) \notin \mathcal{C}[[\V]]$.

So we  are going to investigate the following
\begin{problem*}
When is $\mathcal{C}[[\V]]$ closed ?
\end{problem*}

It is easy to see that $\mathcal{C}[[\V]]$ is closed if the sequence $\V$ satisfies the  condition: there exist two constants $c, C>0$ and a sequence of positive numbers $\lambda_n> 0$ such that the inequalities
\[
c \Big(\sum_{n=0}^\infty   \lambda_n |c_n|^2\Big)^{1/2} \le   \Big\|  \sum_{n= 0}^\infty  c_n v_n \Big\| \le C \Big(\sum_{n=0}^\infty   \lambda_n |c_n|^2\Big)^{1/2}
\]  hold for all finitely supported sequences $\{c_n\}_{n=0}^\infty$ in $\R$. A less obvious sufficient condition for  $\mathcal{C}[[\V]]$ to be closed is given in the following Theorem  \ref{prop-closed}.

Given a positive Radon measure $\mu$ on $\R_{+} = [0, \infty)$, we denote by $\supp(\mu)$ the topological support of $\mu$ and by $s_\mu$  the supremum of $\supp(\mu)$:
\[
s_\mu := \sup\{ x| x\in \supp (\mu)\} \in [0, \infty].
\]
We say that $\mu$ has finite moments of all orders if $\int_{\R_{+}} t^n d\mu(t)<\infty$ for all $n \in \N$.

\begin{theorem}\label{prop-closed}
Let $\mu$  be a positive Radon measure on $\R_{+}$ having finite moments of all orders and satisfying the condition:
\[
\mu\big(\R_{+} \setminus [0, s_\mu)\big) = 0.
\]
 Let  $\mathcal{W}   = \{w_n\}_{n =0}^\infty$ be a sequence  in another real Hilbert space $\mathscr{K}$ and assume that there exists a constant $C>0$ such that  the inequality
\begin{align}\label{ab-cond}
\Big\| \sum_{n = 0}^\infty  a_n w_n \Big\| \le  C \Big\|\sum_{n = 0}^\infty a_n t^n\Big\|_{L^2(\mu)}
\end{align}
holds for all finite supported sequences $\{a_n\}_{n=0}^\infty$ of  non-negative real numbers.  Assume that there exists a sequence  $\{\lambda_n\}_{n=0}^\infty$ of positive numbers such that the sequence $\V = \{v_n\}_{n= 0}^\infty \subset \mathscr{H}$ satisfies
\begin{align}\label{v-n-inner}
\langle \lambda_n v_n, \lambda_m v_m\rangle_{\mathscr{H}} = \int_\R t^{m+n} d\mu(t) + \langle w_n, w_m\rangle_{\mathscr{K}}   \quad \text{for all $n, m \in \N$}.
\end{align}
Then we have $\overline{\mathcal{C}}[\V] =\mathcal{C}[[\V]]$ and thus $\mathcal{C}[[\V]]$ is a closed convex cone.
\end{theorem}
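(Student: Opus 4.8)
The plan is to reduce the statement to a concrete model and then prove that $\mathcal{C}[[\V]]$ is closed; by \eqref{p-s-c} this amounts to showing $\overline{\mathcal{C}}[\V]\subseteq\mathcal{C}[[\V]]$. The starting observation is that \eqref{v-n-inner} says exactly that the Gram matrix of $\{\lambda_n v_n\}_{n\geq 0}$ coincides with the Gram matrix of $\{(t^n,w_n)\}_{n\geq 0}$ inside the Hilbert space $L^2(\mu)\oplus\mathscr{K}$. Hence there is a linear isometry of $\overline{\spann}\{v_n\}$ onto $\overline{\spann}\{(t^n,w_n)\}$ carrying $\lambda_n v_n$ to $(t^n,w_n)$; since isometries commute with closures and since rescaling each $v_n$ by the positive number $\lambda_n^{-1}$ changes neither $\mathcal{C}[[\V]]$ nor $\overline{\mathcal{C}}[\V]$, I may assume $\mathscr{H}=L^2(\mu)\oplus\mathscr{K}$ and $v_n=(t^n,w_n)$. (The degenerate case $\mu=c\,\delta_0$ forces, via \eqref{ab-cond}, $w_n=0$ and $v_n=0$ for $n\geq 1$, so the claim is trivial there; thus I assume $m_n:=\int_{\R_+}t^n\,d\mu>0$ for all $n$.) In this model an element of $\mathcal{C}[[\V]]$ is a pair $\big(\sum_n c_n t^n,\sum_n c_n w_n\big)$ with $c_n\geq 0$ and $\sum_n c_n t^n$ convergent in $L^2(\mu)$; convergence of the first series already forces that of the second by \eqref{ab-cond} applied to tails.

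Now take $u=\lim_k u_k$ with $u_k=\big(\sum_n c_n^{(k)}t^n,\sum_n c_n^{(k)}w_n\big)\in\mathcal{C}[[\V]]$, $c_n^{(k)}\geq 0$, and write $u=(f,g)$, so $f_k:=\sum_n c_n^{(k)}t^n\to f$ in $L^2(\mu)$ and $g_k:=\sum_n c_n^{(k)}w_n\to g$ in $\mathscr{K}$. Since all terms of $f_k$ are non-negative, $c_n^{(k)}t^n\leq f_k$ pointwise on $\R_+$, so $c_n^{(k)}m_n\leq\int f_k\,d\mu\leq\sup_k\|f_k\|_{L^1(\mu)}<\infty$ and hence $c_n^{(k)}\leq M/m_n$ with $M$ independent of $n,k$. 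By a diagonal argument pass to a subsequence $(k_j)$ with $c_n^{(k_j)}\to c_n\in[0,M/m_n]$ for every $n$, and (passing further) with $f_{k_j}\to f$ $\mu$-a.e. Two elementary facts about $\mu$ enter: as $\mu$ is finite, $m_n^{1/n}=\|t\|_{L^n(\mu)}\to\|t\|_{L^\infty(\mu)}=s_\mu$ (here the hypothesis $\mu(\R_+\setminus[0,s_\mu))=0$ is used, to get $\|t\|_{L^\infty(\mu)}=s_\mu$), and consequently for every $a\in[0,s_\mu)$ the series $\sum_n a^n/m_n$ converges by the root test.

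The heart of the matter, and the step I expect to be the main obstacle, is the uniform tail estimate
\[
\delta_N:=\sup_j\Big\|\sum_{n>N}c_n^{(k_j)}t^n\Big\|_{L^2(\mu)}\longrightarrow 0\qquad\text{as }N\to\infty .
\]
To prove it, fix $\varepsilon>0$ and split $\supp\mu$ at a point $a<s_\mu$. On $\{t>a\}$ use $0\leq\sum_{n>N}c_n^{(k_j)}t^n\leq f_{k_j}$ together with $\int_{\{t>a\}}f^2\,d\mu\to 0$ as $a\uparrow s_\mu$ (dominated convergence, again using $\mu(\R_+\setminus[0,s_\mu))=0$) and $f_{k_j}\to f$ in $L^2(\mu)$, to make $\int_{\{t>a\}}\big(\sum_{n>N}c_n^{(k_j)}t^n\big)^2 d\mu$ uniformly small for all large $j$ and all $N$. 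On $\{t\leq a\}$ use $\sum_{n>N}c_n^{(k_j)}t^n\leq\sum_{n>N}c_n^{(k_j)}a^n\leq M\sum_{n>N}a^n/m_n$, whose right-hand side tends to $0$ as $N\to\infty$ uniformly in $j$ because $\sum_n a^n/m_n<\infty$; then multiply by $\mu(\R_+)$. The finitely many remaining indices $j$ are handled by monotone/dominated convergence in $N$. Combining the two regions yields $\delta_N\to 0$.

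Finally, assemble the conclusion. For each fixed $N$ the finite sum $\sum_{n\leq N}c_n^{(k_j)}t^n\to\sum_{n\leq N}c_n t^n$ in $L^2(\mu)$ as $j\to\infty$, so
\[
\Big\|f-\sum_{n\leq N}c_n t^n\Big\|_{L^2(\mu)}=\lim_j\Big\|f_{k_j}-\sum_{n\leq N}c_n t^n\Big\|_{L^2(\mu)}\leq\limsup_j\Big\|\sum_{n>N}c_n^{(k_j)}t^n\Big\|_{L^2(\mu)}\leq\delta_N ,
\]
which shows $c_n\geq 0$ and $\sum_n c_n t^n=f$ in $L^2(\mu)$. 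Then \eqref{ab-cond} gives that $\sum_n c_n w_n$ converges in $\mathscr{K}$, say to $h$, with $\big\|\sum_{n>N}c_n w_n\big\|\leq C\big\|\sum_{n>N}c_n t^n\big\|_{L^2(\mu)}\to 0$. The same splitting $g_{k_j}-\sum_{n\leq N}c_n w_n=\sum_{n\leq N}(c_n^{(k_j)}-c_n)w_n+\sum_{n>N}c_n^{(k_j)}w_n$, together with $\big\|\sum_{n>N}c_n^{(k_j)}w_n\big\|\leq C\delta_N$ (from \eqref{ab-cond}, since the coefficients are non-negative), yields $\|g-\sum_{n\leq N}c_n w_n\|\leq C\delta_N$, whence $g=h=\sum_n c_n w_n$. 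Therefore $u=(f,g)=\sum_n c_n v_n\in\mathcal{C}[[\V]]$, so $\mathcal{C}[[\V]]$ is closed and equals $\overline{\mathcal{C}}[\V]$.
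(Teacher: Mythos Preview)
Your proof is correct and follows the same overall strategy as the paper: transfer to the concrete model $(t^n,w_n)$ in $L^2(\mu)\oplus\mathscr{K}$ via the Gram identity \eqref{v-n-inner}, extract a subsequence along which the coefficients converge via a diagonal argument, and then identify the limit $(f,g)$ as the series $\sum_n c_n(t^n,w_n)$. The handling of the $\mathscr{K}$-component through \eqref{ab-cond} is essentially identical in both.

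The technical execution of the key step differs. The paper bounds the coefficients \emph{pointwise} by choosing a sequence $\tau_m\uparrow s_\mu$ in the support at which $f_k(\tau_m)$ converges, obtaining $a_n^{(k)}\tau_m^n\le M_m$; it then uses dominated convergence on each $[0,\tau_{m-1}]$ to get the pointwise identity $f_\infty(t)=\sum_n a_n^{(\infty)}t^n$ $\mu$-a.e., and finally upgrades to $L^2$ via $\big|\sum_{n>N}a_n^{(\infty)}t^n\big|\le f_\infty$. You instead obtain an \emph{integrated} coefficient bound $c_n^{(k)}m_n\le\|f_k\|_{L^1(\mu)}$, invoke $m_n^{1/n}\to s_\mu$ to make $\sum_n a^n/m_n$ summable for $a<s_\mu$, and go straight for a uniform $L^2$ tail estimate $\delta_N\to 0$ by splitting the domain at a point $a<s_\mu$. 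Your route bypasses the intermediate pointwise identification at the cost of a slightly more delicate $\varepsilon$-argument; both are clean and of comparable length. One small remark: the identity $\|t\|_{L^\infty(\mu)}=s_\mu$ does not require the hypothesis $\mu(\{s_\mu\})=0$; that hypothesis is used exactly where you invoke it the second time, to get $\int_{\{t>a\}}f^2\,d\mu\to 0$ as $a\uparrow s_\mu$.
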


\begin{remark*}
The condition $\mu\big(\R_{+} \setminus [0, s_\mu)\big) = 0$ means that either $s_\mu = \infty$ or  $\mu(\{s_\mu\}) = 0$ if $s_\mu<\infty$. This condition in general can not be removed in Theorem \ref{prop-closed}. For instance, let $\nu$ be the Lebesgue measure on $[0,1]$ and let $\mu= \nu + \delta_1$, where $\delta_1$ is the Dirac mass at the point $1$, then  the set
\[
\bigg\{\sum_{n=0}^\infty a_nt^n\Big|a_n\geq 0,\text{ the series }\sum_{n=0}^\infty a_nt^n\text{ converges in }L^2(\mu)\bigg\}
\]
is not closed. Indeed, the sequence $\{t^n\}_{n=0}^\infty$ converges in $L^2(\mu)$ to the Dirac function $\delta_1 \in L^2(\mu)$. But clearly, this limit function $\delta_1$ is not of the form $\sum_{n=0}^\infty a_n t^n$.
\end{remark*}

\begin{remark*}
By modifying the proof of Theorem \ref{prop-closed}, we can replace the sequence of functions $\{t^n\}_{n =0}^\infty$ by any sequence $\{h_n(t)\}_{n=0}^\infty$ of continuous non-decreasing non-negative functions on $\R_{+}$ satisfying the property:
\[
\sum_{n= 0}^\infty \frac{h_n(t)}{h_n(s)}<\infty  \quad \text{for any pair $(s, t)$ with $0\le t < s$.}
\]
\end{remark*}

Theorem \ref{prop-closed} has the following useful corollary.  Before stating the corollary, let us note that if  there exists a constant $C>0$ such that
\[
\langle w_n, w_m\rangle\le C \int_{\R_{+}} t^{m+n}d\mu(t) \quad \text{for all $m, n \in \N$},
\]
then the condition \eqref{ab-cond} is satisfied with the constant given by $\sqrt{C}$.

\begin{corollary}\label{cor-real-line}
Let $\nu$  be a positive Radon measure on $\R$.  Assume that the restriction  $\mu = \nu|_{\R_{+}}$ of the measure $\nu$ on $\R_{+}$ has  finite moments of all orders and satisfies the condition $\mu\big(\R_{+} \setminus [0, s_\mu)\big) = 0$. Assume moreover that there exists a constant $C>0$ such that
\begin{align}\label{neg-less-pos}
\int_{\R_{-}} t^{2n} d\nu(t) \le C \int_{\R_{+}} t^{2n} d\nu(t) \quad \text{for all $n\in \N$},
\end{align}
where $\R_{-}= \R\setminus \R_{+}$.  Then
\[
\mathcal{C}_{L^2(\nu)}[[ \{t^n\}_{n=0}^\infty]] = \bigg\{\sum_{n=0}^\infty a_nt^n\Big|a_n\geq 0,\text{ the series }\sum_{n=0}^\infty a_nt^n\text{ converges in }L^2(\nu)\bigg\}
\]
is a closed convex cone in $L^2(\nu)$.
\end{corollary}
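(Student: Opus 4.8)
The plan is to derive the corollary directly from Theorem \ref{prop-closed} by a suitable choice of the auxiliary data. I would take $\mathscr{H} = L^2(\nu)$, set $v_n = t^n$, and take all weights $\lambda_n = 1$. First one checks that $v_n \in L^2(\nu)$: we have $\int_{\R_{+}} t^{2n}\,d\nu = \int_{\R_{+}} t^{2n}\,d\mu < \infty$ because $\mu$ has finite moments of all orders, and $\int_{\R_{-}} t^{2n}\,d\nu \le C\int_{\R_{+}} t^{2n}\,d\nu < \infty$ by \eqref{neg-less-pos}; adding these gives $\|t^n\|_{L^2(\nu)}^2 = \int_{\R} t^{2n}\,d\nu < \infty$. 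For the second Hilbert space I would take $\mathscr{K} = L^2(\nu|_{\R_{-}})$ and let $w_n$ be the function $t^n$ regarded as an element of $\mathscr{K}$ (if $\nu(\R_{-}) = 0$ one simply takes $\mathscr{K}$ trivial and all $w_n = 0$); each $w_n$ is well-defined by the estimate just obtained. Alternatively, one may build $\mathscr{K}$ abstractly from the fact that the matrix $\big[\int_{\R_{-}} t^{m+n}\,d\nu\big]_{m,n\ge 0}$ is positive semidefinite, being of Gram type, $\int_{\R_{-}}\big(\sum_n c_n t^n\big)^2\,d\nu \ge 0$.

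Next I would verify the hypotheses of Theorem \ref{prop-closed}. Condition \eqref{v-n-inner} holds verbatim: since $\mu$ is carried by $\R_{+}$ we have $\int_{\R} t^{m+n}\,d\mu = \int_{\R_{+}} t^{m+n}\,d\mu = \int_{\R_{+}} t^{m+n}\,d\nu$, whence $\int_{\R} t^{m+n}\,d\mu + \langle w_n, w_m\rangle_{\mathscr{K}} = \int_{\R_{+}} t^{m+n}\,d\nu + \int_{\R_{-}} t^{m+n}\,d\nu = \int_{\R} t^{m+n}\,d\nu = \langle v_n, v_m\rangle_{L^2(\nu)}$. For condition \eqref{ab-cond} I would invoke the remark stated just before the corollary: it suffices to check that $\langle w_n, w_m\rangle_{\mathscr{K}} \le C\int_{\R_{+}} t^{m+n}\,d\mu$ for all $m, n \in \N$. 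When $m+n$ is odd this is immediate, since $\langle w_n, w_m\rangle_{\mathscr{K}} = \int_{\R_{-}} t^{m+n}\,d\nu \le 0 \le C\int_{\R_{+}} t^{m+n}\,d\mu$; when $m+n = 2k$ is even it reads $\int_{\R_{-}} t^{2k}\,d\nu \le C\int_{\R_{+}} t^{2k}\,d\nu$, which is precisely \eqref{neg-less-pos} with $n$ replaced by $k$. Hence \eqref{ab-cond} holds with constant $\sqrt{C}$.

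Finally, the hypotheses on the measure in Theorem \ref{prop-closed} — finite moments of all orders and $\mu\big(\R_{+}\setminus[0,s_\mu)\big) = 0$ — are exactly what is assumed about $\mu = \nu|_{\R_{+}}$ in the corollary. Theorem \ref{prop-closed} then yields $\overline{\mathcal{C}}[\{t^n\}_{n=0}^\infty] = \mathcal{C}_{L^2(\nu)}[[\{t^n\}_{n=0}^\infty]]$, and since the left-hand side is a closed convex cone, so is $\mathcal{C}_{L^2(\nu)}[[\{t^n\}_{n=0}^\infty]]$, which is the assertion. I do not expect any real obstacle: the whole argument is the bookkeeping verification that splitting $\nu$ into its restrictions to $\R_{+}$ and $\R_{-}$ matches the ``$\mu$ plus $\mathscr{K}$'' structure demanded by Theorem \ref{prop-closed}, the only mildly delicate point being the sign bookkeeping for odd moments, handled above by the trivial bound $\int_{\R_{-}} t^{m+n}\,d\nu \le 0$.
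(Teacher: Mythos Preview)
Your proposal is correct and follows essentially the same approach as the paper: split $\nu$ as $\mu = \nu|_{\R_+}$ plus the $\R_-$ part, take $\mathscr{K} = L^2(\R_-,\nu)$ with $w_n = t^n$, and verify the hypotheses of Theorem~\ref{prop-closed}. The only cosmetic difference is that the paper verifies \eqref{ab-cond} by expanding $\|\sum a_n t^n\|_{L^2(\R_-,\nu)}^2$ directly, whereas you invoke the intermediate remark that $\langle w_n,w_m\rangle \le C\int_{\R_+} t^{m+n}\,d\mu$ suffices; the underlying odd/even sign bookkeeping is identical.
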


\begin{remark*}
In general, the condition  \eqref{neg-less-pos} can not be removed in Corollary \ref{cor-real-line}. For instance, consider the Lebesgue measure on the interval $[-1, 0]$ and the associated Hilbert space $L^2([-1,0])$. Then the set
\[
\mathcal{C}_{L^2([-1,0])}[[ \{t^n\}_{n=0}^\infty]] = \bigg\{\sum_{n=0}^\infty a_nt^n\Big|a_n\geq 0,\text{ the series }\sum_{n=0}^\infty a_nt^n\text{ converges in }L^2([-1,0])\bigg\}
\]
is not closed in $L^2([-1,0])$. See the Appendix of this paper for the details.
\end{remark*}

\subsection{Metric projection onto a closed convex cone} Assume that the convex cone $\mathcal{C}[[\V]]$ is closed, that is $\mathcal{C}[[\V]] = \overline{\mathcal{C}}[\V]$. Proposition \ref{prop-det-proj} below is an application to our situation of the classical results (cf. \cite[Lemma 1.1]{Zarantonello-1971}) on the metric projection onto a closed convex cone.  We shall see that Proposition \ref{prop-det-proj} can be useful in computing explicitly the metric projections of given vectors.
  
\begin{definition*}
We say that a sequence  $\V= \{v_n\}_{n=0}^\infty$ in a Hilbert space $\mathscr{H}$ has no  positive relations, if the coincidence of two convergent series
\[
\sum_{n= 0}^\infty a_n v_n   = \sum_{n=0}^\infty b_n v_n
\]
with all coefficients  non-negative implies $a_n = b_n$ for all $n \in \N$.
\end{definition*}
\begin{remark*}
Note that if  the sequence $\V= \{v_n\}_{n=0}^\infty$ has no positive relations, then the vectors $v_n$'s are linearly independent. 
\end{remark*}

By convention, in what follows, we set
\[
\sum_{n\in \emptyset} a_n v_n := 0.
\]
\begin{proposition}\label{prop-det-proj}
Let $\V = \{v_n\}_{n =0}^\infty \subset \mathscr{H}$ be a sequence without positive relations and assume that $\mathcal{C}[[\V]]$ is closed. Then for any $w\in \mathscr{H}$, there exists a unique subset $S\subset \N$ such that
\begin{align}\label{P-CV-S}
P_{\mathcal{C}[[\V]]} (w) = \sum_{n\in S} a_n v_n \quad \text{with $a_n > 0$ for all $n\in S$,}
\end{align}
where $(S, \{a_n\}_{n\in S})$ is uniquely determined by
\begin{align}\label{unique-S-a}
\left\{
  \begin{array}{l}
    \displaystyle \sum_{n\in S} a_n v_n \text{\,converges in $\mathscr{H}$ and $a_n >0$ for all $n\in S$};  \\
   \displaystyle  \sum_{k\in S} a_k\langle v_k, v_n\rangle \ge \langle w, v_n\rangle \text{\, for all $n\in \N$};  \\
     \displaystyle  \sum_{k\in S} a_k\langle v_k, v_n\rangle  =  \langle w, v_n\rangle \text{\, for all $n\in S$}.
  \end{array}
\right.
\end{align}
\end{proposition}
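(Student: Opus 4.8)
The plan is to derive Proposition~\ref{prop-det-proj} from the classical variational characterization of the metric projection onto a closed convex cone. Recall (cf.\ \cite[Lemma 1.1]{Zarantonello-1971}) that for a closed convex cone $K\subset\mathscr{H}$ and $w\in\mathscr{H}$, a vector $y\in K$ coincides with $P_K(w)$ if and only if $\langle w-y,u\rangle\le 0$ for every $u\in K$ and $\langle w-y,y\rangle=0$. Since $\mathcal{C}[[\V]]$ is assumed closed, by \eqref{p-s-c} it equals $\overline{\mathcal{C}}[\V]$, the closed convex cone generated by $\V$. First I would observe that, because $\V$ generates this cone, the condition ``$\langle w-y,u\rangle\le 0$ for all $u\in\overline{\mathcal{C}}[\V]$'' is equivalent to the countable family of inequalities ``$\langle w-y,v_n\rangle\le 0$ for all $n\in\N$'': the nontrivial direction expands a finite non-negative combination of the $v_n$, and then passes to the closure using continuity of the inner product.

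For the existence part, set $w^*:=P_{\mathcal{C}[[\V]]}(w)$. Closedness gives $w^*\in\mathcal{C}[[\V]]$, hence $w^*=\sum_{n=0}^\infty a_nv_n$ with $a_n\ge 0$ and the series convergent; let $S:=\{n:a_n>0\}$. Discarding the vanishing terms leaves the partial sums unchanged, so $\sum_{n\in S}a_nv_n$ converges to $w^*$, which is the first line of \eqref{unique-S-a}. Applying the characterization together with continuity of the inner product gives $\sum_{k\in S}a_k\langle v_k,v_n\rangle=\langle w^*,v_n\rangle\ge\langle w,v_n\rangle$ for all $n$, the second line. Finally, from $0=\langle w-w^*,w^*\rangle=\sum_{n\in S}a_n\langle w-w^*,v_n\rangle$ combined with $\langle w-w^*,v_n\rangle\le 0$ and $a_n>0$ for $n\in S$, every summand must vanish, so $\langle w-w^*,v_n\rangle=0$ for $n\in S$, i.e.\ the third line. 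This complementary-slackness step is the point where I would be most careful, as it must be deduced from the single scalar identity $\langle w-w^*,w^*\rangle=0$ together with the sign information.

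For uniqueness, the plan is to prove conversely that any pair $(S,\{a_n\}_{n\in S})$ satisfying \eqref{unique-S-a} produces $y:=\sum_{n\in S}a_nv_n=P_{\mathcal{C}[[\V]]}(w)$: indeed $y\in\overline{\mathcal{C}}[\V]$ by the first line, the second line gives $\langle w-y,v_n\rangle\le 0$ for all $n$ and hence $\langle w-y,u\rangle\le 0$ on $\overline{\mathcal{C}}[\V]$, and the third line gives $\langle w-y,y\rangle=\sum_{n\in S}a_n\langle w-y,v_n\rangle=0$, so $y=P_K(w)$ by the classical criterion. Given this, if $(S,\{a_n\})$ and $(S',\{b_n\})$ both satisfy \eqref{unique-S-a}, then extending $a_n:=0$ off $S$ and $b_n:=0$ off $S'$ yields two convergent series $\sum_{n=0}^\infty a_nv_n=\sum_{n=0}^\infty b_nv_n$ with non-negative coefficients; the no-positive-relations hypothesis forces $a_n=b_n$ for all $n$, whence $S=\{n:a_n>0\}=S'$ and the coefficients agree.

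Overall the argument is essentially organizational once \cite[Lemma 1.1]{Zarantonello-1971} is in hand. The two places needing genuine, if small, care are (i) the equivalence between the inequality $\langle w-y,\cdot\rangle\le 0$ tested against all of $\overline{\mathcal{C}}[\V]$ and tested only against the generators $v_n$ (continuity of the inner product plus the generation property), and (ii) the complementary-slackness deduction that $\langle w-w^*,v_n\rangle=0$ on $S$. The no-positive-relations assumption enters only at the very end, to upgrade an equality of vectors to an equality of index sets and coefficients.
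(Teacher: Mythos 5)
Your proposal is correct and follows essentially the same route as the paper: the classical variational characterization of the projection onto a closed convex cone, reduction of the inequality test to the generators $v_n$, the complementary-slackness argument from $\langle w-w^*,w^*\rangle=0$, and the no-positive-relations hypothesis to pin down $S$ and the coefficients. The only cosmetic difference is that you invoke the cone orthogonality as part of one classical lemma while the paper splits it between \cite[Lemma 1.1]{Zarantonello-1971} and \cite[Lemma 3]{IM-1991}, and you spell out the converse (sufficiency) direction for uniqueness slightly more explicitly than the paper does.
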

\begin{remark*}
Note that in Proposition \ref{prop-det-proj}, by saying that $\sum_{n\in S} a_n v_n$ converges in $\mathscr{H}$, we mean that the following limit exists in $\mathscr{H}$:
\[
\lim_{N\to\infty} \sum_{n\in S, \, n\le N} a_n v_n.
\]
\end{remark*}

For any subset $S\subset \N$, define a subset $\mathscr{H}(\V, S)\subset \mathscr{H}$ by
\begin{align}\label{def-H-S}
\mathscr{H}(\V, S): = \bigg\{ w \in \mathscr{H} \Big| P_{\mathcal{C}[[\V]]} (w) = \sum_{n\in S} a_n v_n \quad \text{with $a_n > 0$ for all $n\in S$} \bigg\}.
\end{align}
In particular, we have  $\mathscr{H}(\V, \emptyset)= (P_{\mathcal{C}[[\V]]})^{-1}(0)$.

By noting that the conditions in \eqref{unique-S-a} are stable under addition and multiplication by a positive constant, we obtain the following corollary of Proposition~\ref{prop-det-proj}.

\begin{corollary}
Let $\V = \{v_n\}_{n =0}^\infty \subset \mathscr{H}$ be a sequence without positive relations and assume that $\mathcal{C}[[\V]]$ is closed. Then we have a partition of the whole Hilbert space $\mathscr{H}$:
\[
\mathscr{H}= \bigsqcup_{S\subset \N}  \mathscr{H}(\V, S).
\]
Moreover,  for any subset $S\subset \N$, the subset $
\{0\} \cup \mathscr{H}(\V, S)$
is a convex cone and the restriction of the metric projection
\[
P_{\mathcal{C}[[\V]]}\Big|_{\mathscr{H}(\V, S)}: \mathscr{H}(\V,S)\rightarrow \mathcal{C}[[\V]]
\]
is affine. That is, for any $\lambda_1, \lambda_2 >0$ and any $w_1, w_2 \in \mathscr{H}(\V, S)$, we have
\[
P_{\mathcal{C}[[\V]]} (\lambda_1 w_1 + \lambda_2 w_2) =  \lambda_1 P_{\mathcal{C}[[\V]]} (w_1) + \lambda_2 P_{\mathcal{C}[[\V]]} (w_2).
\]
\end{corollary}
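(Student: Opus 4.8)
The plan is to derive the whole statement from the uniqueness clause of Proposition~\ref{prop-det-proj}, exploiting the fact that the defining system \eqref{unique-S-a} behaves additively in the data $(w,\{a_n\})$ once the index set $S$ is held fixed. First I would establish the partition. Given $w\in\mathscr{H}$, Proposition~\ref{prop-det-proj} yields a subset $S\subset\N$ and coefficients $a_n>0$ with $P_{\mathcal{C}[[\V]]}(w)=\sum_{n\in S}a_nv_n$, so $w\in\mathscr{H}(\V,S)$ and every vector lies in at least one piece. For disjointness, suppose $w\in\mathscr{H}(\V,S_1)\cap\mathscr{H}(\V,S_2)$; then $\sum_{n\in S_1}a_nv_n=P_{\mathcal{C}[[\V]]}(w)=\sum_{n\in S_2}b_nv_n$ with $a_n,b_n>0$, and extending both families by zero outside $S_1$, resp. $S_2$, the no-positive-relations hypothesis forces $a_n=b_n$ for all $n$, whence $S_1=\{n:a_n>0\}=\{n:b_n>0\}=S_2$. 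Thus $\mathscr{H}=\bigsqcup_{S\subset\N}\mathscr{H}(\V,S)$ (empty pieces being harmless).

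Next I would prove the cone and affineness statements simultaneously by a single computation. Fix $S\subset\N$, take $w_1,w_2\in\mathscr{H}(\V,S)$ and $\lambda_1,\lambda_2>0$, and write $P_{\mathcal{C}[[\V]]}(w_i)=\sum_{n\in S}a_n^{(i)}v_n$ with $a_n^{(i)}>0$; by Proposition~\ref{prop-det-proj} the pair $(S,\{a_n^{(i)}\}_{n\in S})$ is \emph{the} solution of \eqref{unique-S-a} attached to $w_i$. I claim $(S,\{\lambda_1a_n^{(1)}+\lambda_2a_n^{(2)}\}_{n\in S})$ solves \eqref{unique-S-a} for $w:=\lambda_1w_1+\lambda_2w_2$: the coefficients are clearly positive; the series $\sum_{n\in S}(\lambda_1a_n^{(1)}+\lambda_2a_n^{(2)})v_n$ converges, being $\lambda_1\sum_{n\in S}a_n^{(1)}v_n+\lambda_2\sum_{n\in S}a_n^{(2)}v_n$, with value $\lambda_1P_{\mathcal{C}[[\V]]}(w_1)+\lambda_2P_{\mathcal{C}[[\V]]}(w_2)$; and, pairing each convergent series $\sum_{k\in S}a_k^{(i)}v_k$ with a fixed $v_n$ (legitimate by continuity of $\langle\,\cdot\,,v_n\rangle$) and then taking the $\lambda_1$-, $\lambda_2$-combination of the inequalities and equalities in \eqref{unique-S-a} for $w_1$ and $w_2$, one obtains exactly the inequalities (for all $n\in\N$) and equalities (for all $n\in S$) required for $w$. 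By the uniqueness in Proposition~\ref{prop-det-proj} this forces $P_{\mathcal{C}[[\V]]}(\lambda_1w_1+\lambda_2w_2)=\sum_{n\in S}(\lambda_1a_n^{(1)}+\lambda_2a_n^{(2)})v_n=\lambda_1P_{\mathcal{C}[[\V]]}(w_1)+\lambda_2P_{\mathcal{C}[[\V]]}(w_2)$, which is both the affineness identity and the assertion that $\lambda_1w_1+\lambda_2w_2\in\mathscr{H}(\V,S)$. Since multiplication by the scalar $0$ sends any vector to $0$, the set $\{0\}\cup\mathscr{H}(\V,S)$ is then closed under addition and under non-negative scalar multiplication, i.e. a convex cone; for $S=\emptyset$ no adjunction is even needed, as $\mathscr{H}(\V,\emptyset)=(P_{\mathcal{C}[[\V]]})^{-1}(0)$ already contains $0$.

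The argument is essentially formal; the only point demanding a little care is the manipulation of the $S$-indexed series, namely that $\sum_{k\in S}(\lambda_1a_k^{(1)}+\lambda_2a_k^{(2)})\langle v_k,v_n\rangle$ converges and splits additively, and likewise for the vector series defining the projections. This is settled by the single observation that each $\sum_{k\in S}a_k^{(i)}v_k$ converges in $\mathscr{H}$ and that bounded linear functionals, in particular $\langle\,\cdot\,,v_n\rangle$, commute with such limits; with this in place no genuine obstacle remains, the substance of the corollary being carried entirely by Proposition~\ref{prop-det-proj}.
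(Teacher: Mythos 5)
Your proof is correct and follows exactly the route the paper intends: the paper gives no separate proof of this corollary, merely remarking that the conditions in \eqref{unique-S-a} are stable under addition and multiplication by a positive constant and invoking the uniqueness in Proposition~\ref{prop-det-proj}, which is precisely the argument you have written out in full (including the continuity of $\langle\,\cdot\,,v_n\rangle$ needed to pass the inner products through the series).
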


By considering the analogue of Proposition \ref{prop-det-proj} for convex cone generated by finitely many vectors, we obtain in Corollary  \ref{prop-matrix} a result for positive definite matrices. This result  seems to be known in the litterature. We include it since we believe that our proof may be of its own interests.

\begin{corollary}\label{prop-matrix}
Assume that $A$ is a non-singular positive definite real-coefficient $n\times n$ matrix. Then for any $(c_1,\cdots,c_n) \in\R^n$, there exists a unique subset $S\subset\{1,2,\cdots, n\}$ and a unique  $x\in\R^S$ such that
\[
\left\{
  \begin{array}{ll}
x_i>0, & \hbox{for all $i\in S$;} 
\vspace{2mm}
\\
   \displaystyle \sum_{j\in S}a_{ij}x_j=c_i, & \hbox{for all $i\in S$;} \vspace{2mm}\\
    \displaystyle  \sum_{j\in S}a_{ij}x_j\geq c_i, & \hbox{for all $i\in\{1,2,\cdots, n\}$.}
  \end{array}
\right.
\]
\end{corollary}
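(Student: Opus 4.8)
The plan is to realize $A$ as a Gram matrix and then invoke the finite-dimensional analogue of Proposition~\ref{prop-det-proj}. Since $A$ is a symmetric positive definite matrix, it has a (unique) positive definite square root $A^{1/2}$, which is invertible because $A$ is non-singular. Let $v_1,\dots,v_n\in\R^n$ be the columns of $A^{1/2}$; then $\langle v_i,v_j\rangle = (A^{1/2}A^{1/2})_{ij} = a_{ij}$ for all $i,j$, so the Gram matrix of the finite sequence $\V:=\{v_1,\dots,v_n\}$ is precisely $A$. As $A^{1/2}$ is invertible, $v_1,\dots,v_n$ is a basis of $\R^n$; in particular the $v_i$ are linearly independent, hence $\V$ has no positive relations. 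Moreover $\mathcal{C}[\V]$, being a finitely generated convex cone in a finite-dimensional space, is closed (a classical fact; see e.g. \cite{Davis-1954}), and since $\V$ is finite we have $\mathcal{C}[[\V]] = \mathcal{C}[\V] = \overline{\mathcal{C}}[\V]$. Thus the hypotheses of the finite analogue of Proposition~\ref{prop-det-proj} are met.

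Next I would transfer the data $(c_1,\dots,c_n)$ to a vector: since $v_1,\dots,v_n$ is a basis, there is a unique $w\in\R^n$ with $\langle w,v_i\rangle = c_i$ for every $i$, namely $w = A^{-1/2}c$. Applying the finite analogue of Proposition~\ref{prop-det-proj} to this $w$ yields a unique $S\subset\{1,\dots,n\}$ and unique positive reals $\{x_j\}_{j\in S}$ with $P_{\mathcal{C}[[\V]]}(w) = \sum_{j\in S}x_j v_j$, the pair $(S,\{x_j\})$ being characterized by the three conditions of \eqref{unique-S-a}. Reading these through the dictionary $\langle v_i,v_j\rangle = a_{ij}$, $\langle w,v_i\rangle = c_i$, and using $a_{ij}=a_{ji}$, they become exactly: $x_i>0$ for all $i\in S$; $\sum_{j\in S}a_{ij}x_j = c_i$ for all $i\in S$; and $\sum_{j\in S}a_{ij}x_j \ge c_i$ for all $i\in\{1,\dots,n\}$. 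This is precisely the asserted system, and its uniqueness is inherited directly from the uniqueness part of Proposition~\ref{prop-det-proj}.

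The only ingredient that is not a mere translation is the finite-dimensional analogue of Proposition~\ref{prop-det-proj}. Its proof is the routine adaptation of the countable case (ultimately \cite[Lemma~1.1]{Zarantonello-1971}) and is in fact simpler here, since closedness of $\mathcal{C}[\V]$ is automatic and no convergence questions arise: for $S\subset\{1,\dots,n\}$ let $P_S$ denote the orthogonal projection of $\R^n$ onto $\spann\{v_j:j\in S\}$, note that $\langle w-P_Sw, v_i\rangle=0$ automatically for $i\in S$, and seek $S$ for which $P_S w=\sum_{j\in S}x_j v_j$ has all $x_j>0$ while $\langle w-P_Sw, v_i\rangle\le 0$ for every $i$; the standard obtuse-angle characterization of metric projections onto closed convex cones then forces $P_Sw=P_{\mathcal{C}[\V]}(w)$, and positive definiteness of the principal submatrices $(a_{ij})_{i,j\in S}$ makes the defining linear systems uniquely solvable. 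I expect the bookkeeping of the ``active set'' $S$ — showing that shrinking $S$ whenever a coefficient would vanish terminates at one and only one admissible set — to be the only point requiring genuine care; everything else is linear algebra.
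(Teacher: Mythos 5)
Your proposal is correct and follows the same overall strategy as the paper: realize $A$ as a Gram matrix of linearly independent vectors in a Hilbert space, produce a vector $w$ whose inner products with those vectors are the $c_i$, and then read the three conditions off the (finite analogue of the) characterization in Proposition~\ref{prop-det-proj}, using closedness of finitely generated cones and $a_{ij}=a_{ji}$ exactly as the paper does. The only real difference is in how the realization is built. The paper takes a probabilistic route: it represents $A$ as the covariance matrix $(\E(X_iX_j))$ of linearly independent square-integrable random variables and then proves a separate lemma (via Schmidt orthogonalization) producing a random variable $Y$ with $c_i=\E(X_iY)$. You instead take the columns $v_i$ of $A^{1/2}$ and set $w=A^{-1/2}c$, which collapses that lemma into a one-line computation; this is more economical and stays entirely inside $\R^n$, at the cost of the probabilistic interpretation the authors seem to value for its own sake. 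Your closing sketch of the finite-dimensional analogue of Proposition~\ref{prop-det-proj} is also fine, though you make it sound harder than it is: the paper simply reuses the proof of Proposition~\ref{prop-det-proj} verbatim (the obtuse-angle characterization plus $\langle w-P(w),P(w)\rangle=0$), with no active-set iteration needed, since existence and uniqueness of the projection already give the pair $(S,x)$ directly.
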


\begin{remark*}
If $c_i\leq 0$ for all $1\leq i\leq n$, then we take $S=\emptyset$ in Corollary \ref{prop-matrix}.
\end{remark*}

\subsection{Computation of the metric projections}\label{sec-scheme}
We shall see in \S \ref{sec-func-th} that  Proposition~\ref{prop-det-proj} may be used to compute explicitly the metric projection of a vector onto the closed convex cone generated by a sequence.  Note that our method is different from the one presented in \cite{algo-app}. 

The general scheme is given as follows (note that although we focus on the case of an infinite sequence $\V= \{v_n\}_{n = 0}^\infty$, the same scheme is clearly still valid when the sequence is finite, that is $\V = \{v_n\}_{n = 0}^N$ with $N\in\N$). The main assumptions  for Proposition \ref{prop-det-proj} are
\begin{itemize}
\item[(i)] The sequence $\V= \{v_n\}_{n = 0}^\infty$ has no positive relations.
\item[(ii)] The closed convex cone $\overline{\mathcal{C}}[\V]$ generated by the sequence $\V$ is given by $\overline{\mathcal{C}}[\V] = \mathcal{C}[[\V]]$.
\end{itemize}
Under the above assumptions, assume that $w\in \mathscr{H}$ is a given vector and we want to compute the metric projection $P_{\mathcal{C}[[\V]]} (w)$. By Proposition \ref{prop-det-proj}, we shall and only need to determine a unique subset $S\subset \N$ (we will denote this subset by $S(w; \V, \mathscr{H})$ if it is necessary) and a unique sequence $(a_n)_{n\in S}$ with $a_n>0$ for all $n\in S$ such that \eqref{unique-S-a} is satisfied.   For further reference, let us denote
\begin{align}\label{Gamma-w-V}
\Gamma(w; \V, \mathscr{H}): = \Big\{v_n\Big| n \in S(w; \V, \mathscr{H}) \Big\}.
\end{align}
The main difficulty in computing the metric projection $P_{\mathcal{C}[[\V]]}(w)$ is to determine the unique subset $S$, or equivalently, to determine the set $\Gamma(w; \V, \mathscr{H})$. In general, it is not known to the authors whether there is an efficient way for determining such subset $S
\subset \N$ for an arbitrarily given vector $w\in \mathscr{H}$. However, for a given vector $w\in \mathscr{H}$ and a given subset $S\subset \N$, by  Proposition \ref{prop-det-proj}, it is relatively easier to determine whether the equality 
\[
\Gamma(w; \V, \mathscr{H})= \Big\{v_n\Big| n \in S \Big\}
\] holds or not (this is equivalent to determine whether $w\in \mathscr{H}(\V, S)$ or not,  where $\mathscr{H}(\V, S)$ is defined as in \eqref{def-H-S}). Let us explain how to do so when $S\subset \N$ is a given {\it finite} subset. Set
\[
M : = \Big(\langle v_m, v_n\rangle \Big)_{m, n \in \N}
\]
and let $M_S$ be the sub-matrix indexed by $S\times S$:
\[
M_S: = \Big(\langle v_m, v_n\rangle \Big)_{m, n \in S}.
\]
Then we need to solve the linear equation
\begin{align}\label{M-S-xy}
M_S x = y,
\end{align}
where  $x = (x_n)_{n\in S}\in \R^{S}$ is a column vector to be determined  and $y\in \R^S$ is the column vector defined by $y = (\langle w, v_n\rangle)_{n\in S}$. The assumption that the sequence $\V$ has no positive relations implies that the matrix  $M_S$ is non-singular (here we use the assumption that $S$ is finite) and thus the linear equation \eqref{M-S-xy} has a unique solution, denoted by $\widehat{x}\in \R^S$.  Now it remains to check whether the following conditions are satisfied:
\begin{align}\label{x-hat}
\left\{
\begin{array}{cl}
\widehat{x}_k>0 &\text{for all $k\in S$}
\vspace{2mm}
\\
\displaystyle \sum_{k\in S} \widehat{x}_k\langle v_k, v_n\rangle \ge \langle w, v_n\rangle & \text{for all $n\in \N \setminus S$}
\end{array}
\right..
\end{align}
If \eqref{x-hat} is satisfied, then $w\in \mathscr{H}(\V, S)$ and moreover, we obtain the desired metric projection $P_{\mathcal{C}[[\V]]}(w)$:
\[
P_{\mathcal{C}[[\V]]}(w)= \sum_{k\in S} \widehat{x}_k v_k.
\]
Otherwise, \eqref{x-hat} is not satisfied, then $w\notin \mathscr{H}(\V, S)$ and we shall try other subsets $S\subset\N$ for computing $P_{\mathcal{C}[[\V]]}(w)$.

\subsection{Applications in function theory}\label{sec-func-th}  Consider the Lebesgue measure on $[-1,1]$ and the associated Hilbert space $L^2([-1, 1])$. For easing the notation, set
\[
\mathcal{A}_{+} : = \mathcal{C}_{L^2([-1,1])}[[\{t^n\}_{n = 0}^\infty]],
\]
that is,
\[
\mathcal{A}_{+}:  = \bigg\{\sum_{n=0}^\infty a_nt^n\Big|a_n\geq 0,\text{ the series }\sum_{n=0}^\infty a_nt^n\text{ converges in }L^2([-1,1])\bigg\}.
\]
By Corollary \ref{cor-real-line}, the set $\mathcal{A}_{+}$
is a closed convex cone in $L^2([-1,1])$.  As before, the associated metric projection is denoted by  $P_{\mathcal{A}_{+}}: L^2([-1,1]) \rightarrow \mathcal{A}_{+}$.

\subsubsection{Power functions}

For any $\beta \in (0, \infty)$, let $h_\beta \in L^2([-1,1])$ be the power function defined by
\[
h_\beta(t) = |t|^{\beta}, \quad t\in [-1,1].
\]
 It turns out that the best approximation of $h_\beta$ by elements in $\mathcal{A}_{+}$ is given by a linear combination  of two elements $t^{2m}, t^{2m+2}$ with $2m$ and $2m+2$ the closest two even numbers  to $\beta$. For this reason, in what follows,  it is convenient for us to write
\[
\beta = 2m + \alpha \quad \text{with $m \in \N$ and $\alpha \in [0, 2)$.}
\]

\begin{proposition}\label{prop-h-alpha}
Let $\alpha\in [0,2)$ and $m\in \mathbb{N}$. Then
\begin{align}\label{eq-met-proj}
P_{\mathcal{A}_{+}}(h_{2m+\alpha})=a_m t^{2m}+b_m t^{2m+2},
\end{align}
where $a_m$ and $b_m$ are given by
\begin{align}\label{a-b-m}
\left\{
  \begin{array}{ll}
    \displaystyle a_m= a_m(\alpha): = \frac{(4m+1)(4m+3)}{(4m+1+\alpha)(4m+3+\alpha)}\frac{2-\alpha}{2},  \vspace{2mm} \\
   \displaystyle  b_m=b_m(\alpha):  = \frac{(4m+3)(4m+5)}{(4m+1+\alpha)(4m+3+\alpha)}\frac{\alpha}{2}.
  \end{array}
\right.
\end{align}
Moreover, the distance $d(h_{2m+\alpha}, \mathcal{A}_{+})$ is given by
\begin{align}\label{best-dist}
d(h_{2m+\alpha}, \mathcal{A}_{+}) =   \frac{ \sqrt{2} \alpha (2 - \alpha)}{(4m +\alpha +1)(4m +\alpha+3) \sqrt{4m+2\alpha +1}}
\end{align}
and the relative distance $\lambda(h_{2m+\alpha}, \mathcal{A}_{+})$ is given by
\[
\lambda(h_{2m+\alpha}, \mathcal{A}_{+}) =   \frac{\alpha (2 - \alpha)}{(4m +\alpha +1)(4m +\alpha+3)}.
\]
\end{proposition}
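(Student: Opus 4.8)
The plan is to apply Proposition~\ref{prop-det-proj} with $\mathscr{H} = L^2([-1,1])$, $\V = \{t^n\}_{n=0}^\infty$ and $w = h_{2m+\alpha}$, guessing the index set $S = \{2m,\,2m+2\}$ (and $S=\{2m\}$ in the degenerate case $\alpha = 0$, where $h_{2m} = t^{2m} \in \mathcal{A}_{+}$ already, so the statement is trivial). Two structural facts must first be put in place. First, $\mathcal{A}_{+} = \mathcal{C}_{L^2([-1,1])}[[\{t^n\}]]$ is closed: this is Corollary~\ref{cor-real-line} applied to Lebesgue measure $\nu$ on $[-1,1]$, for which $\mu = \nu|_{\R_{+}}$ is Lebesgue measure on $[0,1]$ (finite moments, $s_\mu = 1$, $\mu(\{1\}) = 0$) and \eqref{neg-less-pos} holds with $C = 1$ by symmetry. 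Second, $\{t^n\}_{n=0}^\infty$ has no positive relations in $L^2([-1,1])$: if $\sum a_n t^n = \sum b_n t^n$ in $L^2$ with all coefficients nonnegative, then on $[0,1]$ the partial sums are pointwise nondecreasing in $N$, so each series converges pointwise a.e.\ on $[0,1]$ to a function that extends analytically to $(-1,1)$; the two analytic extensions agree a.e.\ on $[0,1]$, hence everywhere, so $a_n = b_n$ for all $n$.

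Next I would compute the inner products appearing in \eqref{unique-S-a}. In $L^2([-1,1])$ one has $\langle t^j, t^k\rangle = 2/(j+k+1)$ when $j+k$ is even and $\langle t^j,t^k\rangle = 0$ otherwise, while $\langle h_{2m+\alpha}, t^k\rangle = 2/(2m+\alpha+k+1)$ when $k$ is even and $0$ otherwise. With $S = \{2m, 2m+2\}$ and unknowns $a_m = a_{2m}$, $b_m = a_{2m+2}$, the two equality constraints in \eqref{unique-S-a} become the $2\times2$ linear system
\[
\frac{a_m}{4m+1} + \frac{b_m}{4m+3} = \frac{1}{4m+1+\alpha}, \qquad \frac{a_m}{4m+3} + \frac{b_m}{4m+5} = \frac{1}{4m+3+\alpha},
\]
whose coefficient matrix is nonsingular because $t^{2m}, t^{2m+2}$ are linearly independent; a direct substitution shows that \eqref{a-b-m} gives its unique solution, and the formulas make $a_m, b_m > 0$ as soon as $\alpha \in (0,2)$.

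The main step is to verify the remaining inequalities $\sum_{k\in S} a_k\langle v_k, v_n\rangle \ge \langle w, v_n\rangle$ for all $n \in \N\setminus S$. For odd $n$ both sides vanish. For even $n = 2\ell$, put $q = 2m+2\ell+1$, which ranges over the odd integers $\ge 2m+1$; the inequality reads $\frac{a_m}{q} + \frac{b_m}{q+2} \ge \frac{1}{q+\alpha}$. Clearing denominators, the function $g(x) := \frac{a_m}{x} + \frac{b_m}{x+2} - \frac{1}{x+\alpha}$ equals $\varphi(x)\big/\big(x(x+2)(x+\alpha)\big)$, where $\varphi$ is a quadratic with leading coefficient $a_m + b_m - 1 = \frac{\alpha(2-\alpha)}{(4m+1+\alpha)(4m+3+\alpha)} > 0$; since the equality constraints already verified say precisely that $g(4m+1) = g(4m+3) = 0$, we get $\varphi(x) = (a_m+b_m-1)\,(x-(4m+1))(x-(4m+3))$. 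The open interval $(4m+1, 4m+3)$ contains no odd integer, so $\varphi(q) \ge 0$ — hence $g(q) \ge 0$ — for every odd integer $q \ge 2m+1$, with equality exactly at $q = 4m+1$ and $q = 4m+3$, i.e.\ at $n = 2m$ and $n = 2m+2$. Thus $(S, \{a_m, b_m\})$ satisfies all of \eqref{unique-S-a}, and Proposition~\ref{prop-det-proj} yields \eqref{eq-met-proj}. I expect this to be the only non-routine point: the crux is to see that, after clearing denominators, the gap between the two sides is a single quadratic whose roots are forced to be the two ``active'' indices, and that no further integer index lands strictly between $4m+1$ and $4m+3$.

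It remains to read off the distance. The equality constraints give $\langle P_{\mathcal{A}_{+}}(h_{2m+\alpha}), h_{2m+\alpha}\rangle = a_m\langle t^{2m}, h_{2m+\alpha}\rangle + b_m\langle t^{2m+2}, h_{2m+\alpha}\rangle = \|P_{\mathcal{A}_{+}}(h_{2m+\alpha})\|^2$, whence
\[
d(h_{2m+\alpha}, \mathcal{A}_{+})^2 = \|h_{2m+\alpha}\|^2 - a_m\langle t^{2m}, h_{2m+\alpha}\rangle - b_m\langle t^{2m+2}, h_{2m+\alpha}\rangle .
\]
Substituting $\|h_{2m+\alpha}\|^2 = 2/(4m+2\alpha+1)$ together with the inner products and \eqref{a-b-m}, and simplifying by means of the identity $(4m+3)(4m+1+2\alpha) = (4m+1+\alpha)(4m+3+\alpha) + \alpha(2-\alpha)$, the right-hand side collapses to
\[
\frac{2\,\alpha^2(2-\alpha)^2}{(4m+1+\alpha)^2(4m+3+\alpha)^2(4m+2\alpha+1)},
\]
which is the square of the expression in \eqref{best-dist}; dividing by $\|h_{2m+\alpha}\| = \sqrt{2/(4m+2\alpha+1)}$ then gives the stated value of $\lambda(h_{2m+\alpha}, \mathcal{A}_{+})$. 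This last part is pure bookkeeping once the displayed algebraic identity is noticed.
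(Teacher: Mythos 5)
Your proposal is correct, and it follows the same overall strategy as the paper: apply Proposition~\ref{prop-det-proj} with $S=\{2m,2m+2\}$, solve the $2\times 2$ system for $(a_m,b_m)$, and then check the remaining inequalities in \eqref{unique-S-a} for all even indices. The one place where you genuinely diverge is the verification of those inequalities. The paper brute-forces it: it defines the normalized difference $D_k$, expands it in the variables $\tau=4m+3$, $x=4k$, and discovers after the dust settles that $D_k=2\alpha(2-\alpha)\bigl[(2k-2m-1)^2-1\bigr]\ge 0$. You instead observe that after clearing denominators the gap is a quadratic $\varphi$ whose leading coefficient is $a_m+b_m-1=\alpha(2-\alpha)/\bigl((4m+1+\alpha)(4m+3+\alpha)\bigr)>0$ and whose roots are forced by the equality constraints to be $4m+1$ and $4m+3$, so $\varphi(q)\ge 0$ for every odd integer $q$ because no odd integer lies strictly between the roots. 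This is the same factorization the paper arrives at (its expression $(2k-2m-1)^2-1=(2k-2m)(2k-2m-2)$ is exactly your $(q-(4m+1))(q-(4m+3))$ up to the change of variable), but your route explains \emph{why} the factorization must occur and requires essentially no expansion; it also makes transparent that equality holds exactly at the two active indices. Likewise, your distance computation uses the Pythagorean identity $d^2=\|w\|^2-\langle w,P(w)\rangle$ coming from the equality constraints, whereas the paper expands $\|h_{2m+\alpha}-a_mt^{2m}-b_mt^{2m+2}\|^2$ term by term; both reduce to the same identity $(4m+3)(4m+1+2\alpha)-(4m+1+\alpha)(4m+3+\alpha)=\alpha(2-\alpha)$, but yours with less bookkeeping. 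Your explicit verification that $\{t^n\}_{n=0}^\infty$ has no positive relations in $L^2([-1,1])$ is a hypothesis of Proposition~\ref{prop-det-proj} that the paper uses tacitly, so including it is a small improvement rather than a redundancy.
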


\begin{remark*}
Let $\alpha \in (0, 2)$. Then the coefficients $a_m(\alpha)$ and $b_m(\alpha)$ in Proposition \ref{prop-det-proj} satisfy
\[
a_m(\alpha) + b_m(\alpha) = \frac{(4m +3)(4m + 2\alpha +1)}{(4m+1+\alpha)(4m+3+\alpha)}>1.
\]
Hence the best approximation in $\mathcal{A}_{+}$ of the function $h_{2m+\alpha}$  is not a convex combination of $t^{2m}$ and $t^{2m+2}$.  In Figure \ref{3-best-approx}, we draw the graphs in the first quadrant the best approximation in $\mathcal{A}_{+}$ of $h_3$:
\[
h_3(t)= |t|^3, \quad P_{\mathcal{A}_{+}}(h_3)= \frac{35}{96} t^2 + \frac{21}{32} t^4.
\]
\end{remark*}

\begin{figure}[htbp!]
  \centering
  \includegraphics[width=5in]{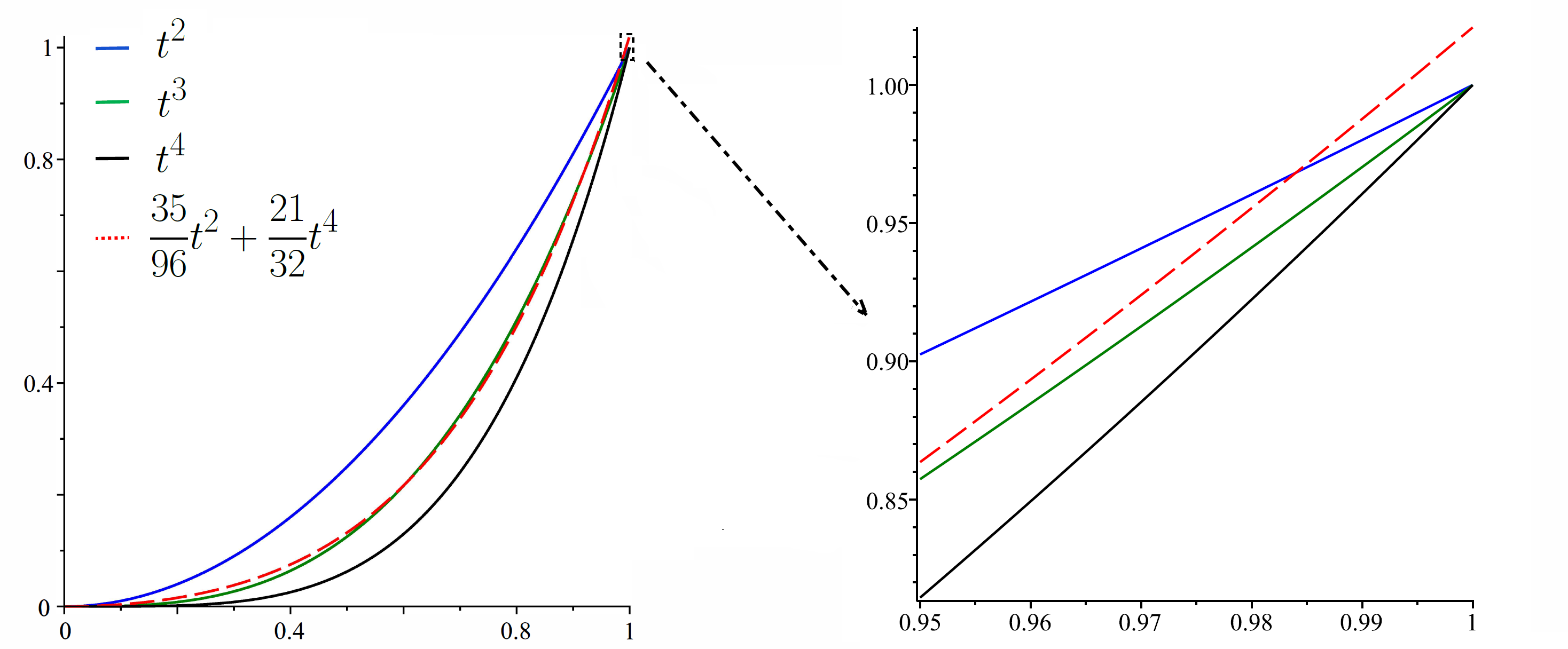}\\
  \caption{The best approximation of $|t|^3$ by polynomials with non-negative coefficients: the graphs in the first quadrant.} \label{3-best-approx}
\end{figure}

\begin{remark*}
Let $\alpha \in (0, 2)$. Recall the definition \eqref{Gamma-w-V}.  Proposition \ref{prop-det-proj} implies the equality:
\begin{align}\label{eq-gamma-t}
\Gamma\Big(|t|^{2m + \alpha}; \{t^n\}_{n = 0}^\infty, L^2([-1, 1])\Big) = \{t^{2m}, t^{4m}\}.
\end{align}
As we shall see in the proof of Proposition \ref{prop-det-proj}, it requires substantial efforts to prove the equality \eqref{eq-gamma-t}.

\begin{figure}[htbp!]
  \centering
  \includegraphics[width=2in]{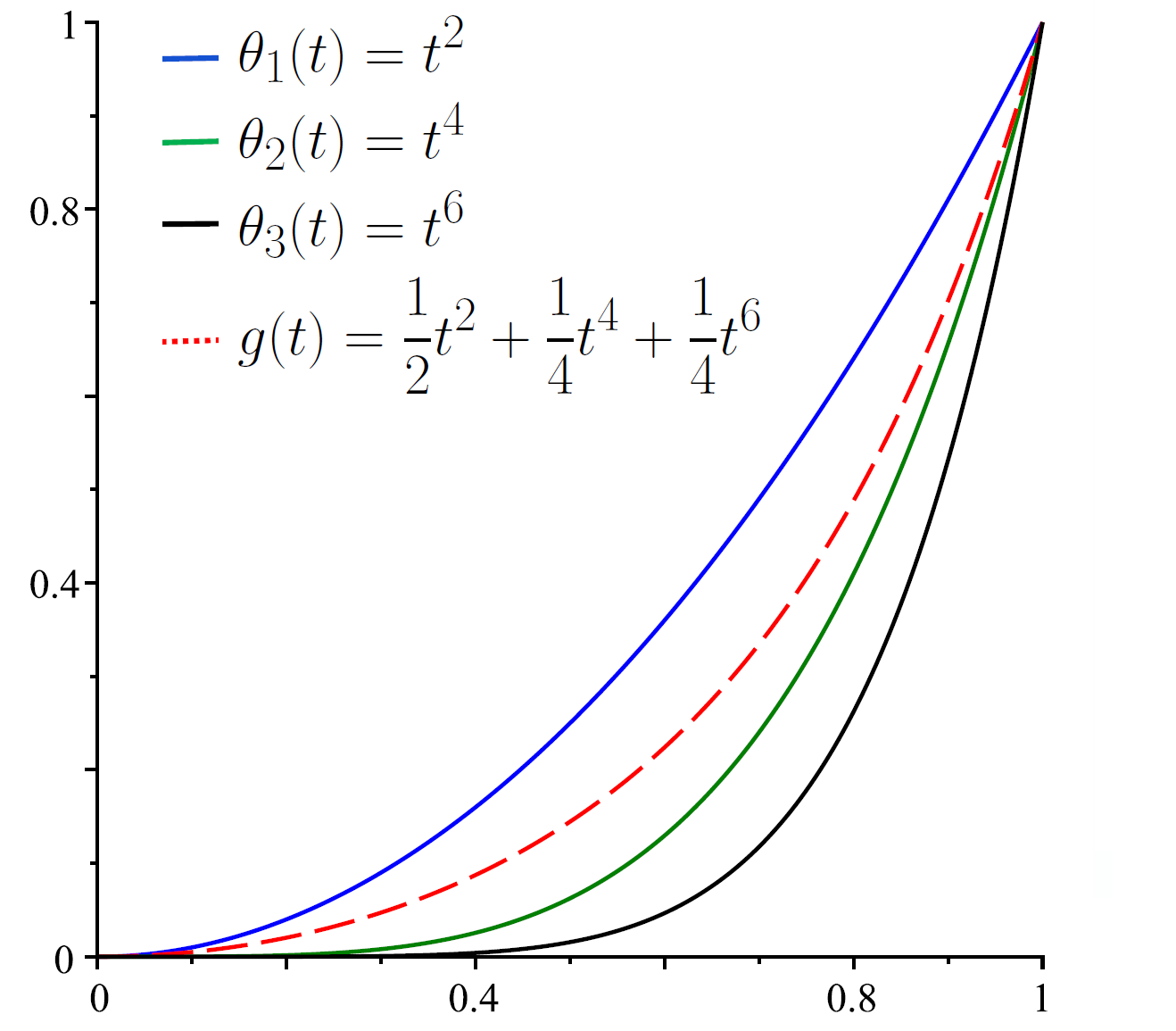}\\
  \caption{The best approximation in $\mathcal{C}(\theta_1, \theta_2, \theta_3)$ of the function $g$ is not given by positive combination of the two functions $\theta_1, \theta_2$, but is given by positive combination of all three functions $\theta_1, \theta_2, \theta_3$.}\label{fig-gh}
\end{figure}

For explaining clearer the subtlety of the equality \eqref{eq-gamma-t}, let us consider the Hilbert space $L^2([0,1])$.  It is easy to see that the equality \eqref{eq-gamma-t} is equivalent to the equality
\begin{align}\label{eq-gamma-unit}
\Gamma\Big(t^{2m + \alpha}; \{t^{2n}\}_{n = 0}^\infty, L^2([0, 1])\Big) = \{t^{2m}, t^{4m}\}.
\end{align}
Note  that the sequence $\{t^n\}_{n = 0}^\infty$ now is repalced by the sequence $\{t^{2n}\}_{n = 0}^\infty$.
That is, in the Hilbert space $L^2([0,1])$, the best approximation of the function $t^{2m+\alpha}$  by elements in the closed convex cone generated by $\{t^{2n}\}_{n=0}^\infty$  is given by a positive combination of the two functions $t^{2m}$ and $t^{2m+2}$.    One may  think that the equality \eqref{eq-gamma-unit} is a consequence of the following observation: among the graphs of all functions $t^0, t^2, t^4, \cdots$ on $[0,1]$,  the closest ones to that   of the function $t^{2m+\alpha}$ are exactly those of $t^{2m}$ and $t^{2m+2}$.  However, let us point out that  such kind of  observation in general is not sufficient for deriving the  equality \eqref{eq-gamma-unit}.  For instance,  as shown in   Figure \ref{fig-gh},  among the three graphs of $\theta_1, \theta_2, \theta_3$, the graphs of $\theta_1$ and $\theta_2$ are the closest ones to the graph of $g$. However, $g$ can be exactly approximated by elements in the convex cone $\mathcal{C}(\theta_1, \theta_2, \theta_3)$ generated by $\theta_1, \theta_2, \theta_3$:
\[
g= \frac{1}{2}\theta_1 + \frac{1}{4} \theta_2 + \frac{1}{4} \theta_3 \neq \lambda_1  \theta_1  + \lambda_2 \theta_2 \quad \text{for any $\lambda_1\ge 0, \lambda_2\ge 0$,}
\]
hence we have
\[
\Gamma \Big(g; \{\theta_1, \theta_2, \theta_3\},  L^2([0,1])\Big) = \{\theta_1, \theta_2, \theta_3\} \neq \{\theta_1, \theta_2\}.
\]
\end{remark*}

\begin{corollary}\label{cor-int}
For any $m\in \N$ and any positive Radon  measure $\nu$ on $[0, 2]$,  we have
\[
P_{\mathcal{A}_{+}} \left(\int_{[0,2]} h_{2m+\alpha} d\nu(\alpha)\right) = t^{2m}\int_{[0,2]} a_m(\alpha) d\nu(\alpha) + t^{2m+2} \int_{[0,2]} b_m(\alpha) d\nu(\alpha),
\]
where $a_m(\alpha), b_m(\alpha)$ are defined in \eqref{a-b-m}.
\end{corollary}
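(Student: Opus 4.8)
The plan is to read off the conclusion from the variational characterization \eqref{unique-S-a} of the metric projection provided by Proposition~\ref{prop-det-proj}, exploiting that the defining equalities and inequalities in \eqref{unique-S-a} are linear in the vector being projected, hence are preserved under integration against the positive measure $\nu$.

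First I would fix $m\in\N$ and a positive Radon measure $\nu$ on $[0,2]$, and set
\[
w := \int_{[0,2]} h_{2m+\alpha}\, d\nu(\alpha), \qquad A := \int_{[0,2]} a_m(\alpha)\, d\nu(\alpha), \qquad B := \int_{[0,2]} b_m(\alpha)\, d\nu(\alpha).
\]
Since $[0,2]$ is compact, $\nu$ is finite; moreover $\|h_{2m+\alpha}\|_{L^2([-1,1])}^2 = 2/(4m+2\alpha+1)\le 2$ and $\alpha\mapsto h_{2m+\alpha}$ is continuous into $L^2([-1,1])$, so $w$ is a well-defined element of $L^2([-1,1])$ (a Bochner integral, or equivalently defined by the pointwise formula $w(t)=\int_{[0,2]}|t|^{2m+\alpha}\,d\nu(\alpha)$ together with Fubini--Tonelli), and for every $v\in L^2([-1,1])$ one has $\langle w, v\rangle = \int_{[0,2]} \langle h_{2m+\alpha}, v\rangle\, d\nu(\alpha)$. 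The goal is to prove $P_{\mathcal{A}_{+}}(w) = A\,t^{2m} + B\,t^{2m+2}$.

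The core step is as follows. By Corollary~\ref{cor-real-line}, $\mathcal{A}_{+}=\mathcal{C}_{L^2([-1,1])}[[\{t^n\}_{n=0}^\infty]]$ is closed, and (as is verified in the course of proving Proposition~\ref{prop-h-alpha}) the sequence $\{t^n\}_{n=0}^\infty$ has no positive relations in $L^2([-1,1])$, so Proposition~\ref{prop-det-proj} applies. Applying it to $h_{2m+\alpha}$ and using the explicit formula \eqref{eq-met-proj}, for each $\alpha\in[0,2]$ one gets
\[
a_m(\alpha)\langle t^{2m}, t^n\rangle + b_m(\alpha)\langle t^{2m+2}, t^n\rangle \ \ge\ \langle h_{2m+\alpha}, t^n\rangle \qquad\text{for all } n\in\N,
\]
with equality whenever $n\in\{2m,\,2m+2\}$. (For $\alpha\in(0,2)$ this is precisely \eqref{unique-S-a} with $S=\{2m,2m+2\}$; at the boundary values $\alpha=0$ and $\alpha=2$ one of $a_m(\alpha),b_m(\alpha)$ vanishes and the displayed relations reduce to trivialities, using $h_{2m}=t^{2m}$ and $h_{2m+2}=t^{2m+2}$, so they hold for every $\alpha\in[0,2]$.) Integrating these relations against $d\nu(\alpha)$ — monotonicity of the integral preserves the inequality, linearity preserves the equality — and using the formula for $\langle w, t^n\rangle$ yields
\[
A\langle t^{2m}, t^n\rangle + B\langle t^{2m+2}, t^n\rangle \ \ge\ \langle w, t^n\rangle \qquad\text{for all } n\in\N,
\]
again with equality for $n\in\{2m,\,2m+2\}$.

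Finally I would conclude via Proposition~\ref{prop-det-proj}. If $A>0$ and $B>0$, the last display exhibits $(S,\{a_n\}_{n\in S})=(\{2m,2m+2\},\{A,B\})$ as a solution of all three conditions of \eqref{unique-S-a} for the vector $w$, whence $P_{\mathcal{A}_{+}}(w)=A\,t^{2m}+B\,t^{2m+2}$ by the uniqueness part. If exactly one of $A,B$ vanishes, say $B=0<A$, the same display (with the vacuous term $B\langle t^{2m+2},t^n\rangle$ dropped) shows $(\{2m\},\{A\})$ solves \eqref{unique-S-a}, so $P_{\mathcal{A}_{+}}(w)=A\,t^{2m}=A\,t^{2m}+B\,t^{2m+2}$, and the case $A=0<B$ is symmetric. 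If $A=B=0$, then, since $a_m(\alpha)+b_m(\alpha)>0$ for every $\alpha\in[0,2]$ by \eqref{a-b-m}, necessarily $\nu=0$ and $w=0$, so the identity is trivial. I do not expect a serious obstacle here: the only mildly delicate points are the justification of interchanging $\int d\nu$ with the inner product (handled by the uniform $L^2$-bound on $h_{2m+\alpha}$ and finiteness of $\nu$) and the bookkeeping at the boundary exponents $\alpha\in\{0,2\}$, both routine.
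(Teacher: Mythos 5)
Your proposal is correct and follows essentially the same route as the paper: the paper's proof likewise integrates the relations \eqref{E:inequlity} and \eqref{E:equality} (established in the proof of Proposition \ref{prop-h-alpha}) against $\nu$ and then invokes Proposition \ref{prop-det-proj}. Your extra care with the boundary exponents $\alpha\in\{0,2\}$ and the degenerate cases $A=0$ or $B=0$ is a welcome refinement of details the paper leaves implicit, but it does not change the argument.
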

\begin{remark*}
The functions in Corollary \ref{cor-int} can be quite complicated: for instance, take $\nu$ the Lebesgue measure on $[0,\alpha]$ with $0<\alpha\le 2$, then we have
\[
\int_{[0,\alpha]} h_{2m+ \alpha'}(t) d\alpha'=  \frac{|t|^{2m+\alpha}- t^{2m}}{\log |t|}.
\]
\end{remark*}

\subsubsection{Signed power functions}

For any $\gamma \in (0, \infty)$, let $f_\gamma \in L^2([-1,1])$ be the signed power function defined by
\[
f_\gamma(t) = \sgn(t) |t|^{\gamma}, \quad t\in [-1,1].
\]

\begin{proposition}\label{prop-f-alpha}
Let $\alpha\in [0,2)$ and $m\in \mathbb{N}$. Then
\begin{align}\label{eq-sgn-proj}
P_{\mathcal{A}_{+}}(f_{2m+1+\alpha})=c_m t^{2m+1}+ d_m t^{2m+3},
\end{align}
where $c_m$ and $d_m$ are given by
\begin{align}\label{c-d-m}
\left\{
  \begin{array}{ll}
    \displaystyle c_m= c_m(\alpha): = \frac{(4m+3)(4m+5)}{(4m+3+\alpha)(4m+5+\alpha)} \frac{2-\alpha}{2},  \vspace{2mm} \\
   \displaystyle  d_m=d_m(\alpha):  = \frac{(4m+5)(4m+7)}{(4m+3+\alpha)(4m+5+\alpha)} \frac{\alpha}{2}.
  \end{array}
\right.
\end{align}
Moreover, the distance $d(f_{2m+1+\alpha}, \mathcal{A}_{+})$ is given by 
\begin{align}\label{best-dist-sgn}
d(f_{2m+1+\alpha}, \mathcal{A}_{+})=     \frac{ \sqrt{2} \alpha (2 - \alpha)}{(4m +\alpha +3)(4m +\alpha+5) \sqrt{4m+2\alpha +3}}
\end{align}
and the relative distance $\lambda(f_{2m+1+\alpha}, \mathcal{A}_{+})$ is given by
\[
\lambda(f_{2m+1+\alpha}, \mathcal{A}_{+}) =   \frac{\alpha (2 - \alpha)}{(4m +\alpha +3)(4m +\alpha+5)}.
\]
\end{proposition}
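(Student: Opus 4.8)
The plan is to apply Proposition~\ref{prop-det-proj} with $\mathscr{H}=L^2([-1,1])$ and $\V=\{t^n\}_{n=0}^\infty$. Assumption~(ii) of \S\ref{sec-scheme} holds since $\mathcal{A}_{+}=\mathcal{C}[[\V]]$ is closed by Corollary~\ref{cor-real-line}, and assumption~(i) holds because the monomials $\{t^n\}$ have no positive relations. Hence it suffices to produce \emph{one} subset $S\subset\N$ together with coefficients $a_n>0$, $n\in S$, verifying the three requirements of \eqref{unique-S-a} for $w=f_{2m+1+\alpha}$; by Proposition~\ref{prop-det-proj} this $(S,\{a_n\})$ then describes $P_{\mathcal{A}_{+}}(f_{2m+1+\alpha})$ and is unique. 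Guided by \eqref{eq-sgn-proj} I would take $S=\{2m+1,\,2m+3\}$ (with the convention that when $\alpha=0$ one has $d_m=0$, $f_{2m+1}=t^{2m+1}\in\mathcal{A}_{+}$, and $S=\{2m+1\}$, which makes the statement trivial).

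The first step is to record the inner products in $L^2([-1,1])$. Since $f_{2m+1+\alpha}$ is odd, $\langle f_{2m+1+\alpha},t^n\rangle=0$ for $n$ even and $\langle f_{2m+1+\alpha},t^{2j+1}\rangle=2\int_0^1 t^{2m+2j+\alpha+2}\,dt=\frac{2}{2m+2j+\alpha+3}$, while $\langle t^k,t^n\rangle=\frac{2}{k+n+1}$ if $k+n$ is even and $0$ otherwise. Because $2m+1$ and $2m+3$ are odd, all relations in \eqref{unique-S-a} indexed by an even $n$ are automatic ($0=0$ or $0\ge 0$). The system thus reduces to the two equalities at $n=2m+1$ and $n=2m+3$,
\[
\frac{c_m}{4m+3}+\frac{d_m}{4m+5}=\frac{1}{4m+\alpha+3},\qquad \frac{c_m}{4m+5}+\frac{d_m}{4m+7}=\frac{1}{4m+\alpha+5},
\]
together with the inequalities at the odd indices $n=2j+1$ with $j\neq m,m+1$. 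Solving the $2\times 2$ system produces exactly the values $c_m(\alpha),d_m(\alpha)$ of \eqref{c-d-m}, and those closed forms show $c_m>0$ for $\alpha\in[0,2)$ and $d_m>0$ for $\alpha\in(0,2)$; convergence of the (finite) series is trivial.

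The core of the proof is the remaining infinite family of inequalities $\frac{2c_m}{2m+2j+3}+\frac{2d_m}{2m+2j+5}\ge\frac{2}{2m+2j+\alpha+3}$, $j\in\N$. Setting $x:=2m+2j$ (so $x$ runs through the even integers $\ge 2m$) and clearing the positive denominator $(x+3)(x+5)(x+\alpha+3)$, the inequality becomes $N(x)\ge 0$ where
\[
N(x)=2c_m(x+5)(x+\alpha+3)+2d_m(x+3)(x+\alpha+3)-2(x+3)(x+5)
\]
is a quadratic in $x$ with leading coefficient $2(c_m+d_m-1)=\frac{2\alpha(2-\alpha)}{(4m+3+\alpha)(4m+5+\alpha)}>0$ for $\alpha\in(0,2)$. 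The two equalities isolated above say precisely that $N(4m)=N(4m+2)=0$, so $N(x)=2(c_m+d_m-1)(x-4m)(x-4m-2)$; since no even integer lies strictly between $4m$ and $4m+2$, we get $N(x)\ge 0$ at every admissible $x$, i.e.\ all the inequalities hold. I expect this to be the main obstacle --- it is the same ``two consecutive admissible exponents'' subtlety flagged after \eqref{eq-gamma-t} --- but the forced factorization of $N$ (whose roots must be the two even integers nearest $2m+1$) makes it transparent.

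Finally, the equality conditions in \eqref{unique-S-a} give $\|P_{\mathcal{A}_{+}}(f_{2m+1+\alpha})\|^2=\langle f_{2m+1+\alpha},P_{\mathcal{A}_{+}}(f_{2m+1+\alpha})\rangle$, whence
\[
d(f_{2m+1+\alpha},\mathcal{A}_{+})^2=\|f_{2m+1+\alpha}\|^2-\frac{2c_m}{4m+\alpha+3}-\frac{2d_m}{4m+\alpha+5},
\]
where $\|f_{2m+1+\alpha}\|^2=\frac{2}{4m+2\alpha+3}$. Substituting \eqref{c-d-m} and simplifying yields \eqref{best-dist-sgn}, and dividing by $\|f_{2m+1+\alpha}\|$ yields the stated relative distance. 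As a sanity check on the coefficients, note $f_{2m+1+\alpha}(t)=t\,h_{2m+\alpha}(t)$ with $t\cdot t^{2m}=t^{2m+1}$ and $t\cdot t^{2m+2}=t^{2m+3}$, which explains why \eqref{c-d-m} is the formal image of \eqref{a-b-m} under $m\mapsto m+\tfrac12$; however, multiplication by $t$ being neither isometric nor onto $\mathcal{A}_{+}$, this does not shorten the argument, so the direct verification above is the route I would follow.
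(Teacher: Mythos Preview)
Your proposal is correct and follows essentially the same route as the paper: verify the conditions \eqref{unique-S-a} of Proposition~\ref{prop-det-proj} with $S=\{2m+1,2m+3\}$, dispose of even $n$ by parity, and for odd $n$ clear denominators to reduce to a quadratic inequality. Your observation that this quadratic must factor as $2(c_m+d_m-1)(x-4m)(x-4m-2)$ (since the two equality conditions pin down its roots) is a slightly cleaner way to conclude than the paper's direct expansion to $T_k=2\alpha(2-\alpha)[(2k-2m-1)^2-1]$, but the content is identical, and both note the formal shift $m\mapsto m+\tfrac12$ linking this to Proposition~\ref{prop-h-alpha}.
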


\subsubsection{Indicator functions and non-decreasing functions}
For any $a\in [-1, 1)$, let $\psi_a$ be the function defined by
\[
\psi_a(t) = \mathds{1}(t\ge a) = \left\{
\begin{array}{cl}
0 & \text{if $t\in [-1, a)$}
\vspace{2mm}
\\
1&  \text{if $t \in [a, 1]$}
\end{array}
\right..
\]

Recall the definition \eqref{Gamma-w-V}.
\begin{proposition}\label{prop-indicator}
Assume that $a\in [-1,1)$. Then the equality
\begin{align}\label{gamma-set-3}
\Gamma\Big(\psi_a; \{t^n\}_{n = 0}^\infty, L^2([-1,1])\Big) = \{t^n| n=0,  1, 2\}
\end{align}
holds if and only if 
\[
0< a \le \frac{1}{\sqrt{5}}.
\]  Moreover, for any $a\in (0, 1/\sqrt{5}]$, the metric projection $P_{\mathcal{A}_{+}} (\psi_a)$ is given by
\begin{align}\label{P-C-psi-a}
P_{\mathcal{A}_{+}} (\psi_a)= \frac{1}{8} (4-9a+5a^3) + \frac{3}{4}(1 - a^2) t + \frac{15}{8} (a-a^3) t^2.
\end{align}
\end{proposition}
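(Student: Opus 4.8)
The plan is to run the computational scheme of \S\ref{sec-scheme} with the candidate index set $S=\{0,1,2\}$. Here $\V=\{t^n\}_{n=0}^\infty$ inside $\mathscr{H}=L^2([-1,1])$: the cone $\mathcal{A}_{+}=\mathcal{C}_{L^2([-1,1])}[[\V]]$ is closed by Corollary~\ref{cor-real-line}, and $\{t^n\}_{n=0}^\infty$ has no positive relations in $L^2([-1,1])$, so Proposition~\ref{prop-det-proj} applies with $w=\psi_a$. The moments of Lebesgue measure on $[-1,1]$ give $\langle t^i,t^j\rangle=\frac{2}{i+j+1}$ when $i+j$ is even and $0$ otherwise, while $\langle\psi_a,t^n\rangle=\int_a^1 t^n\,dt=\frac{1-a^{n+1}}{n+1}$. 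First I would solve the linear system $M_S\widehat x=y$ of \eqref{M-S-xy}, with $M_S=(\langle t^i,t^j\rangle)_{i,j\in S}$ and $y=(\langle\psi_a,t^n\rangle)_{n\in S}$: it decouples into $\frac23\widehat x_1=\frac{1-a^2}{2}$ and a $2\times2$ system in $(\widehat x_0,\widehat x_2)$, whose unique solution is
\[
\widehat x_0=\tfrac18(4-9a+5a^3),\qquad \widehat x_1=\tfrac34(1-a^2),\qquad \widehat x_2=\tfrac{15}{8}(a-a^3),
\]
exactly the coefficients in \eqref{P-C-psi-a}. By Proposition~\ref{prop-det-proj} (equivalently, the criterion \eqref{x-hat}), the task is then to decide for which $a$ the two requirements hold: (a) $\widehat x_0,\widehat x_1,\widehat x_2>0$; (b) $\sum_{k\in S}\widehat x_k\langle t^k,t^n\rangle\ge\langle\psi_a,t^n\rangle$ for every $n\ge3$. (The equalities at $n=0,1,2$ are automatic, being precisely the system just solved.)

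For (a): $\widehat x_1=\tfrac34(1-a^2)>0$ since $|a|<1$; $\widehat x_2=\tfrac{15}{8}a(1-a^2)>0$ iff $a>0$; and $\widehat x_0=\tfrac18(1-a)(4-5a-5a^2)>0$ on $\big(0,\tfrac{\sqrt{105}-5}{10}\big)\supset\big(0,\tfrac1{\sqrt5}\big]$. Hence (a) holds exactly for $0<a<\tfrac{\sqrt{105}-5}{10}$, and already forces $a>0$. For (b) I would split $n$ by parity; since $\langle t^k,t^n\rangle=0$ unless $k\equiv n\pmod2$, only $\widehat x_1$ enters for odd $n$ and only $\widehat x_0,\widehat x_2$ for even $n$. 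Clearing denominators (and factoring out the positive quantity $a/4$ in the even case), (b) reduces to the families: for odd $n=2j+1$, $j\ge1$,
\[
(2j+3)\,(1-a^{2j+2})\le 3(j+1)\,(1-a^2);
\]
for even $n=2j$, $j\ge2$,
\[
(2j+3)\,\bigl(9-5a^2-4a^{2j}\bigr)\le 15(2j+1)\,(1-a^2).
\]

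Dividing the first family by $1-a^2>0$, the case $j=1$ reads $5(1+a^2)\le6$, i.e.\ $a^2\le\tfrac15$; for $j\ge2$ the inequality is strict whenever $a^2\le\tfrac15$ (the right side is at least $\tfrac{12}{5}(j+1)$, the left at most $2j+3$, and $5(2j+3)\le12(j+1)$ for $j\ge2$). For the even family, writing $b=a^2\le\tfrac15$, the difference of the two sides is at least $12(j-1)-20jb\ge 8j-12>0$ for $j\ge2$, so all of these hold with room to spare. Therefore (b) holds exactly when $a^2\le\tfrac15$, and combined with (a) the criterion \eqref{x-hat} is satisfied precisely for $0<a\le\tfrac1{\sqrt5}$; for such $a$, Proposition~\ref{prop-det-proj} yields $\Gamma(\psi_a;\{t^n\},L^2([-1,1]))=\{1,t,t^2\}$ and $P_{\mathcal{A}_{+}}(\psi_a)=\widehat x_0+\widehat x_1t+\widehat x_2t^2$, which is \eqref{P-C-psi-a}. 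The converse is immediate from the uniqueness in Proposition~\ref{prop-det-proj}: if $\Gamma(\psi_a;\{t^n\},L^2([-1,1]))=\{1,t,t^2\}$ then $S=\{0,1,2\}$, the coefficients in \eqref{P-CV-S} must be the unique solution $\widehat x$ of $M_S\widehat x=y$ and must satisfy \eqref{unique-S-a}, so $\widehat x_2>0$ forces $a>0$ and the inequality at $n=3$ forces $a^2\le\tfrac15$.

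The main obstacle is the infinite family (b), and in particular confirming that the cutoff is dictated by $n=3$ alone — that no larger $n$ imposes a stricter bound than $a^2\le\tfrac15$. Conceptually this is governed by the sign pattern of the residual: pushing $\psi_a-(\widehat x_0+\widehat x_1t+\widehat x_2t^2)$ forward under $s=t^2$, its even and odd parts are a step function minus an affine function of $s$, respectively minus $c\sqrt s$, each with at most two sign changes $(-,+,-)$ on $[0,1]$ and each orthogonal to two of the test functions, which pins the sign of all remaining weighted moments and leaves only the location of the first failure to check, namely $n=3$. In practice the explicit rational inequalities displayed above are shorter, and that is the route I would take; the rest of the argument is routine bookkeeping.
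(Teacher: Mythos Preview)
Your proposal is correct and follows essentially the same route as the paper: solve $M_S\widehat x=y$ for $S=\{0,1,2\}$, determine when $\widehat x\in(\R_{+}^*)^3$ (Lemma~\ref{lem-33} in the paper), and then verify the infinite family of inequalities by splitting on the parity of $n$. The only cosmetic differences are that the paper packages the odd-index analysis as a separate Lemma~\ref{lem-inf-ineq} and shows the even-index inequalities are dominated by the odd ones via an algebraic identity, whereas you bound each family directly; both arrive at the same cutoff $a^2\le\tfrac15$ coming from $n=3$.
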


\begin{proposition}\label{prop-neg-a}
Assume that $a\in [-1,1)$.  Then the equality
\begin{align}\label{gamma-set-2}
\Gamma\Big(\psi_a; \{t^n\}_{n = 0}^\infty, L^2([-1,1])\Big) = \{t^n| n= 0, 1\}
\end{align}
holds if and only if 
\[
-\frac{1}{\sqrt{5}}\le a \le 0.
\]  Moreover, for any $a\in [-1/\sqrt{5}, 0]$, the metric projection $P_{\mathcal{A}_{+}} (\psi_a)$ is given by
\begin{align}\label{P-neg-a}
P_{\mathcal{A}_{+}} (\psi_a)= \frac{1}{2} (1-a) + \frac{3}{4}(1 - a^2) t.
\end{align}
\end{proposition}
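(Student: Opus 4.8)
The plan is to apply, almost verbatim, the computational scheme of \S\ref{sec-scheme} to the sequence $\V=\{t^n\}_{n=0}^\infty$ in $\mathscr{H}=L^2([-1,1])$. Assumption (ii) of that scheme, namely $\overline{\mathcal{C}}[\V]=\mathcal{C}[[\V]]=\mathcal{A}_{+}$, is exactly Corollary~\ref{cor-real-line}, and assumption (i), that $\{t^n\}_{n=0}^\infty$ has no positive relations in $L^2([-1,1])$, is elementary (a power series with non-negative coefficients whose partial sums converge in $L^2$ has radius of convergence $\ge 1$, hence its sum is real-analytic on $(-1,1)$ and determines the coefficients). Thus Proposition~\ref{prop-det-proj} applies, and it suffices to show that the candidate index set $S=\{0,1\}$ satisfies the characterization \eqref{unique-S-a} — equivalently the finite-$S$ test \eqref{x-hat} — precisely when $a\in[-1/\sqrt5,0]$.

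First I would record the relevant inner products: since $\int_{-1}^1 t^k\,dt=0$ for $k$ odd and $=2/(k+1)$ for $k$ even, the $2\times 2$ matrix $M_{\{0,1\}}=(\langle t^i,t^j\rangle)_{i,j\in\{0,1\}}$ is the diagonal matrix with diagonal entries $2$ and $2/3$, in particular non-singular, while $\langle\psi_a,t^n\rangle=\int_a^1 t^n\,dt=(1-a^{n+1})/(n+1)$. Solving $M_{\{0,1\}}\widehat{x}=(\langle\psi_a,1\rangle,\langle\psi_a,t\rangle)^{T}$ gives $\widehat{x}_0=(1-a)/2$ and $\widehat{x}_1=3(1-a^2)/4$, so that $\widehat{x}_0+\widehat{x}_1 t$ is exactly the right-hand side of \eqref{P-neg-a}; the entire content of the proposition therefore lies in checking \eqref{x-hat} for this $\widehat x$.

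Next I would verify \eqref{x-hat}. Positivity: $\widehat{x}_0>0\Leftrightarrow a<1$ and $\widehat{x}_1>0\Leftrightarrow a\in(-1,1)$, both automatic in the relevant range. For the inequalities $\widehat{x}_0\langle t^0,t^n\rangle+\widehat{x}_1\langle t^1,t^n\rangle\ge\langle\psi_a,t^n\rangle$ with $n\ge2$ I would split on the parity of $n$. If $n$ is even then $\langle t^1,t^n\rangle=0$ and the inequality collapses to $a^{n+1}\ge a$, which for $a\in(-1,1)$ holds exactly when $a\le0$. If $n$ is odd then $\langle t^0,t^n\rangle=0$ and, writing $s=a^2$ and dividing through by $1-s>0$, the inequality becomes $1+s+\cdots+s^{(n-1)/2}\le\frac{3(n+1)}{2(n+2)}$; for $n=3$ this reads $1+s\le 6/5$, i.e. $s\le1/5$, while every odd $n\ge5$ is automatic once $s\le1/5$, because then $1+s+\cdots+s^{(n-1)/2}<\frac{1}{1-s}\le\frac54<\frac97\le\frac{3(n+1)}{2(n+2)}$. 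Combining the three conditions, \eqref{x-hat} holds for $S=\{0,1\}$ if and only if $a\in[-1/\sqrt5,0]$.

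Finally I would assemble both directions. If $a\in[-1/\sqrt5,0]$ then $\widehat x$ satisfies \eqref{x-hat}, so by the scheme of \S\ref{sec-scheme} we obtain $\psi_a\in\mathscr{H}(\V,\{0,1\})$, i.e. \eqref{gamma-set-2} holds and $P_{\mathcal{A}_{+}}(\psi_a)=\widehat{x}_0+\widehat{x}_1 t$, which is \eqref{P-neg-a}. Conversely, if \eqref{gamma-set-2} holds then by the uniqueness in Proposition~\ref{prop-det-proj} the pair $(S,\{a_n\})$ of \eqref{unique-S-a} must be $S=\{0,1\}$ with $(a_0,a_1)$ the unique solution of $M_{\{0,1\}}(a_0,a_1)^T=(\langle\psi_a,1\rangle,\langle\psi_a,t\rangle)^T$, i.e. $(a_0,a_1)=\widehat x$; hence \eqref{x-hat} is forced and therefore $a\in[-1/\sqrt5,0]$. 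I expect the main obstacle to be precisely the infinite family of inequalities in \eqref{x-hat}: one has to see that, among all $n\ge2$, only the single constraint coming from $n=3$ is binding and that it produces exactly the threshold $a^2\le1/5$, the remaining cases (odd $n\ge5$ and all even $n$) being subsumed by elementary but not entirely mechanical estimates.
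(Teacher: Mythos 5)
Your proposal is correct and follows essentially the same route as the paper: apply Proposition \ref{prop-det-proj} with $S=\{0,1\}$, solve the $2\times 2$ system to get $\widehat{x}_0=(1-a)/2$, $\widehat{x}_1=\tfrac34(1-a^2)$, and split the remaining inequalities by parity, with even $j$ forcing $a\le 0$ and odd $j$ (binding at $j=3$) forcing $a^2\le 1/5$. The only difference is cosmetic: where the paper verifies the odd-index family via Lemma \ref{lem-inf-ineq}, you factor $1-a^{j+1}=(1-a^2)(1+a^2+\cdots)$ and bound the geometric sum, which is an equivalent elementary computation.
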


\begin{proposition}\label{prop-big-a}
Assume that $a\in [-1, 1)$. Then the equality
\begin{align}\label{gamma-set-4}
\Gamma\Big(\psi_a; \{t^n\}_{n = 0}^\infty, L^2([-1,1])\Big) = \{t^n| n= 0, 1,2,3\}
\end{align}
holds if and only if
\[
\frac{1}{\sqrt{5}} < a < \frac{\sqrt{105} - 5}{10}.
\]
Moreover, for any $a\in (\frac{1}{\sqrt{5}}, \frac{\sqrt{105} - 5}{10})$, the metric projection $P_{\mathcal{A}_{+}} (\psi_a)$ is given by
\begin{align}\label{P-big-a}
\begin{split}
P_{\mathcal{A}_{+}} (\psi_a)= &  \frac{1}{8} (1 -a)(4-5a-5a^2)  +  \frac{15}{32} (1 - a^2) (3-7a^2) t
\\
 & \quad + \frac{15}{8}a (1-a^2)  t^2 + \frac{35}{32}(1 -a^2)(5a^2 -1) t^3.
\end{split}
\end{align}
\end{proposition}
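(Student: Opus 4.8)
The plan is to apply the verification scheme of \S\ref{sec-scheme} to the vector $w=\psi_a$, the sequence $\V=\{t^n\}_{n\ge 0}$ (which has no positive relations in $L^2([-1,1])$, so that Proposition~\ref{prop-det-proj} and the scheme are available), and the candidate finite subset $S=\{0,1,2,3\}$. First I would record the Gram data: since $\langle t^k,t^n\rangle_{L^2([-1,1])}=\int_{-1}^1 t^{k+n}\,\dd t$ equals $\tfrac{2}{k+n+1}$ for $k+n$ even and $0$ for $k+n$ odd, the matrix $M_S$ is block-diagonal, with even block $\left(\begin{smallmatrix}2&2/3\\2/3&2/5\end{smallmatrix}\right)$ on the indices $\{0,2\}$ and odd block $\left(\begin{smallmatrix}2/3&2/5\\2/5&2/7\end{smallmatrix}\right)$ on the indices $\{1,3\}$, both nonsingular. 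With right-hand side $y=(\langle\psi_a,t^n\rangle)_{n\in S}=\bigl(1-a,\ \tfrac{1-a^2}{2},\ \tfrac{1-a^3}{3},\ \tfrac{1-a^4}{4}\bigr)$, solving $M_S\widehat x=y$ block by block gives, after routine simplification, exactly the four coefficients $\widehat x_0,\widehat x_1,\widehat x_2,\widehat x_3$ displayed in \eqref{P-big-a}.

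Next I would settle the positivity conditions in \eqref{x-hat}, which simultaneously give the ``only if'' direction of \eqref{gamma-set-4}. From the explicit formulas one reads off $\widehat x_2>0\iff a>0$, $\widehat x_1>0\iff a^2<3/7$, $\widehat x_3>0\iff a^2>1/5$, and $\widehat x_0>0\iff 4-5a-5a^2>0\iff a<\tfrac{\sqrt{105}-5}{10}$ (the positive root of $5a^2+5a-4$). Since $0<\tfrac{1}{\sqrt5}<\tfrac{\sqrt{105}-5}{10}<\sqrt{3/7}$, all four hold simultaneously precisely when $\tfrac{1}{\sqrt5}<a<\tfrac{\sqrt{105}-5}{10}$. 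Hence if $\Gamma\bigl(\psi_a;\{t^n\},L^2([-1,1])\bigr)=\{t^0,t^1,t^2,t^3\}$, then by Proposition~\ref{prop-det-proj} the coefficients in \eqref{unique-S-a} solve $M_S x=y$, so they equal $\widehat x$ and are positive, forcing $a$ into that interval.

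The heart of the argument --- and the step I expect to be the main obstacle --- is verifying the remaining inequalities $\sum_{k\in S}\widehat x_k\langle t^k,t^n\rangle\ge\langle\psi_a,t^n\rangle$ for all $n\ge 4$, assuming $a$ lies in the open interval. The crucial simplification is that $\langle t^k,t^n\rangle=0$ whenever $k+n$ is odd, so the inequality for even $n$ involves only $\widehat x_0,\widehat x_2$ and the one for odd $n$ only $\widehat x_1,\widehat x_3$; passing from the integer $n$ to a real variable $x$, the whole family collapses into two scalar inequalities. For even $n$, clearing the positive factor $(n+1)(n+3)$ turns the inequality into $G_e(n)\le 0$, where
\[
G_e(x)=\tfrac{a(5a^2-3)}{2}\,x+3a-(x+3)a^{x+1};
\]
its affine part has negative slope (because $a^2<3/5$) and at $x=4$ already equals $a(10a^2-3)<0$ (because $a^2<3/10$; both bounds follow from $a<\tfrac{\sqrt{105}-5}{10}$), so it stays negative on $[4,\infty)$, and subtracting $(x+3)a^{x+1}>0$ keeps $G_e<0$ there. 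For odd $n$, clearing $(n+1)(n+2)(n+4)$ turns the inequality into $G_o(n)\le 0$, where $G_o(x)=Q(x)-(x^2+6x+8)a^{x+1}$ and
\[
Q(x)=\tfrac18(35a^4-30a^2+3)\,x^2+\tfrac32(5a^2-1)\,x+\tfrac18(9+90a^2-35a^4).
\]
Here I would check, using only $\tfrac15<a^2<\bigl(\tfrac{\sqrt{105}-5}{10}\bigr)^2$, that $Q$ is a downward parabola, that $Q'(5)=\tfrac14(175a^4-120a^2+9)<0$ (so its vertex lies to the left of $5$), and that $Q(5)=3(35a^4-15a^2+1)<0$; then $Q$ decreases on $[5,\infty)$ from a negative value, so $Q<0$ there and hence $G_o<0$ on $[5,\infty)$. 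Each of these three sign assertions reduces to the observation that $a^2$ lies strictly between the two roots of an explicit quadratic in $a^2$, and those roots bracket the interval $\bigl(\tfrac15,(\tfrac{\sqrt{105}-5}{10})^2\bigr)$; this is elementary, but the simultaneous bookkeeping --- together with the derivation of $Q$ --- is the laborious core of the argument.

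Finally, for $a\in\bigl(\tfrac{1}{\sqrt5},\tfrac{\sqrt{105}-5}{10}\bigr)$ the two preceding steps verify all the conditions in \eqref{x-hat} (equivalently, in \eqref{unique-S-a}) for $S=\{0,1,2,3\}$, so Proposition~\ref{prop-det-proj} singles out $S=\{0,1,2,3\}$ as the unique such subset. This yields \eqref{gamma-set-4} together with $P_{\mathcal{A}_{+}}(\psi_a)=\sum_{k=0}^{3}\widehat x_k t^k=$ \eqref{P-big-a}; combined with the converse obtained in the positivity step, the stated equivalence follows.
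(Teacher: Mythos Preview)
Your proof is correct and follows essentially the same approach as the paper: solve the $4\times4$ system $M_S\widehat{x}=y$ to obtain the coefficients in \eqref{P-big-a} (this is the paper's Lemma~\ref{lem-44}), check that $\widehat{x}\in(\R_+^*)^4$ is equivalent to $\tfrac{1}{\sqrt5}<a<\tfrac{\sqrt{105}-5}{10}$, and then verify the residual inequalities \eqref{x-hat} by splitting into even and odd exponents and reducing each family to a single polynomial sign check in the index (this is the paper's Lemma~\ref{lem-two-ineq}). The only difference is cosmetic: the paper indexes by $j=2n$ and $j=2n+1$ (with $n\ge2$) and bounds the resulting expressions from below by dropping the positive $a^{2n+\cdot}$ terms and evaluating at $n=2$, whereas you index by the exponent $n$ directly (with $n\ge4$ or $n\ge5$), flip the sign to obtain $G_e,G_o\le0$, and for the odd case argue via the vertex location of the downward parabola $Q$; the underlying quadratic-in-$a^2$ sign checks (in particular the appearance of $-1+15a^2-35a^4\ge0$, or equivalently your $Q(5)<0$) are the same.
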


\begin{remark*}
Propositions \ref{prop-indicator} and \ref{prop-big-a} may lead  one to guess that there exists a sequence of critical points $\{b_k\}_{k=0}^\infty$ with $0< b_0<b_1<\cdots < b_k < \cdots <1$ such that 
\[
\Gamma\Big(\psi_a; \{t^n\}_{n = 0}^\infty, L^2([-1,1])\Big) = \{ t^n| n = 0, 1, 2, \cdots, k+1, k+2\} \quad \text{for all $a\in (b_{k-1}, b_k)$.}
\]
This is however not clear to the authors at the time of writing. Indeed, the situation becomes more involved when $a$ is close to $1$.  On the other hand, for negative $a$, the situation seems to be different. By \cite{Wulbert-1968}, since $\mathcal{A}_{+}$ is a closed convex cone in a Hilbert space, the metric projection $P_{A_{+}}: L^2([-1,1])\rightarrow \mathcal{A}_{+}$ is a continuous map.   Therefore, by the formula \eqref{gamma-set-2}, there exists $\varepsilon>0$ such that
\[
\{0, 1\} \subset  \Gamma\Big(\psi_a; \{t^n\}_{n = 0}^\infty, L^2([-1,1])\Big) \quad \text{for all $a\in (-\frac{1}{\sqrt{5}}-\varepsilon, -\frac{1}{\sqrt{5}})$}.
\]
But by Propositions \ref{prop-indicator}, \ref{prop-neg-a} and \ref{prop-big-a}, for any $a\in (-\frac{1}{\sqrt{5}}-\varepsilon, -\frac{1}{\sqrt{5}})$, we know that the set $\Gamma\Big(\psi_a; \{t^n\}_{n = 0}^\infty, L^2([-1,1])\Big)$ can not be any one of the three sets: $\{0, 1\}$,  $\{0, 1, 2\}$ or $\{0, 1, 2, 3\}$. 
\end{remark*}

Propositions \ref{prop-indicator}, \ref{prop-neg-a}  and \ref{prop-big-a} allow us to compute the metric projections onto the closed convex cone $\mathcal{A}_{+}$ for functions in three large classes respectively. Let us state  the consequence of Proposition \ref{prop-indicator} in Corollary \ref{cor-monotone} below, the consequences of Propositions~\ref{prop-neg-a} and \ref{prop-big-a} are similar and will be omitted.

Let $\mathcal{M}_{[0, 1/\sqrt{5}]}$ denote the class of functions on $[-1,1]$ consisting of all non-decreasing {\it right continuous} non-negative functions $\varphi: [-1, 1]\rightarrow \R_{+}$ such that
\[
\varphi|_{[-1, 0)} \equiv 0 \an  \varphi|_{[1/\sqrt{5}, 1]}\equiv \text{constant}.
\]
Note that  any $\varphi\in \mathcal{M}_{[0, 1/\sqrt{5}]}$ uniquely determines a non-negative Radon measure,  denoted by $d\varphi$, on the interval $[-1,1]$,  by the formula
\[
d\varphi( [-1, t]) : = \varphi(t) \quad \text{for any $t \in [-1,1]$}.
\]
Moreover,  the support $\supp(d\varphi)$  of the Radon measure $d\varphi$ satisfies $\supp(d\varphi) \subset [0, 1/\sqrt{5}]$ and we have
\begin{align*}
\varphi (t)= \int_{[0,1/\sqrt{5}]} \psi_a (t) d\varphi(a) = \int_{[0,1/\sqrt{5}]}  \mathds{1}(t\ge a) d\varphi(a), \quad t\in [-1,1].
\end{align*}
 On the other hand, for any Radon measure on $[-1,1]$ with support $\supp(\nu)\subset [0, 1/\sqrt{5}]$, the function $\varphi_\nu$, defined by  the formula
\[
\varphi_\nu(t): = \nu([-1, t]) \quad \text{for all $t \in [-1,1]$},
\]
belongs to the class $\mathcal{M}_{[0, 1/\sqrt{5}]}$.

\begin{corollary}\label{cor-monotone}
For any function $\varphi \in \mathcal{M}_{[0, 1/\sqrt{5}]}$, we have
\begin{align*}
P_{\mathcal{A}_{+}} (\varphi) = A_\varphi  + B_\varphi t + C_\varphi t^2,
\end{align*}
with
\begin{align*}
\left\{
\begin{array}{l}
\displaystyle A_\varphi =     \int_{[0, 1/\sqrt{5}]}  \frac{1}{8} (4-9a+5a^3)  d\varphi(a)
\vspace{2mm}
\\
\displaystyle B_\varphi=  \int_{[0, 1/\sqrt{5}]} \frac{3}{4} (1 - a^2)d\varphi(a)
\vspace{2mm}
\\
\displaystyle C_\varphi =    \int_{[0, 1/\sqrt{5}]}  \frac{15}{8} (a-a^3)  d\varphi(a)
\end{array}
\right..
\end{align*}
\end{corollary}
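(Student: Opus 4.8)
The plan is to reduce the corollary to Proposition~\ref{prop-indicator} via an ``integrated'' form of the linearity of the metric projection, verified directly against the characterisation in Proposition~\ref{prop-det-proj}. One may assume $\varphi\not\equiv 0$, for otherwise $A_\varphi=B_\varphi=C_\varphi=0$ and $P_{\mathcal{A}_{+}}(0)=0$. First I would fix the set-up recalled in the excerpt: $d\varphi$ is a positive Radon measure supported in $[0,1/\sqrt5]$, finite with total mass $\varphi(1)$, and $\varphi(t)=\int_{[0,1/\sqrt5]}\psi_a(t)\,d\varphi(a)$ for every $t\in[-1,1]$. Since $\|\psi_a\|_{L^2([-1,1])}\le\sqrt2$ uniformly in $a$ and $d\varphi$ is finite, Fubini's theorem yields, for each $n\in\N$, the identity
\[
\langle\varphi,t^n\rangle=\int_{[0,1/\sqrt5]}\langle\psi_a,t^n\rangle\,d\varphi(a).
\]

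Next I would write $c_0(a)=\tfrac18(4-9a+5a^3)$, $c_1(a)=\tfrac34(1-a^2)$ and $c_2(a)=\tfrac{15}{8}(a-a^3)$ for the three coefficients in \eqref{P-C-psi-a}. For each $a\in(0,1/\sqrt5]$, Proposition~\ref{prop-indicator} tells us that the index set $S$ produced by Proposition~\ref{prop-det-proj} for $w=\psi_a$ is $\{0,1,2\}$, so \eqref{unique-S-a} gives
\[
\sum_{k=0}^{2}c_k(a)\,\langle t^k,t^n\rangle\ \ge\ \langle\psi_a,t^n\rangle\qquad\text{for all }n\in\N,
\]
with equality whenever $n\in\{0,1,2\}$; these relations remain valid at $a=0$, where \eqref{P-C-psi-a} reduces to \eqref{P-neg-a} and $c_2(0)=0$. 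Integrating both the inequality and the equalities against $d\varphi$ over $[0,1/\sqrt5]$ — legitimate since all integrands are bounded and $d\varphi$ is finite — and inserting the displayed identity for $\langle\varphi,t^n\rangle$ together with $A_\varphi=\int c_0\,d\varphi$, $B_\varphi=\int c_1\,d\varphi$, $C_\varphi=\int c_2\,d\varphi$, one arrives at
\[
A_\varphi\langle 1,t^n\rangle+B_\varphi\langle t,t^n\rangle+C_\varphi\langle t^2,t^n\rangle\ \ge\ \langle\varphi,t^n\rangle\qquad\text{for all }n\in\N,
\]
with equality for $n\in\{0,1,2\}$.

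It then remains to verify the positivity part of \eqref{unique-S-a}. On $[0,1/\sqrt5]$ one has $c_1\ge\tfrac35>0$; the polynomial $4-9a+5a^3$ has derivative $15a^2-9<0$ there and value $4-8/\sqrt5>0$ at $a=1/\sqrt5$, so $c_0>0$; and $c_2\ge0$, with $c_2>0$ on $(0,1/\sqrt5]$. Hence $A_\varphi>0$, $B_\varphi>0$ and $C_\varphi\ge0$. Taking $S=\{0,1,2\}$ if $C_\varphi>0$ and $S=\{0,1\}$ if $C_\varphi=0$ (in the latter case the equalities for $n\in S$ and the inequality for all $n$ persist after dropping the $C_\varphi$-term), the finite sum $P:=A_\varphi+B_\varphi t+C_\varphi t^2=\sum_{n\in S}a_n t^n$ has strictly positive coefficients, converges in $L^2([-1,1])$, and meets all three conditions of \eqref{unique-S-a} with $w=\varphi$; the uniqueness in Proposition~\ref{prop-det-proj} then gives $P_{\mathcal{A}_{+}}(\varphi)=P$, which is the asserted formula. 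I expect no genuine obstacle here: the argument is soft, exploiting the affine behaviour of the projection on each region $\mathscr{H}(\V,S)$, and the only points requiring a little care are the interchange of the $L^2$ inner product with the integral $\int\psi_a\,d\varphi(a)$ (immediate from the uniform bound on $\|\psi_a\|$ and finiteness of $d\varphi$) and the bookkeeping of the degenerate case $C_\varphi=0$, i.e.\ when $d\varphi$ is a point mass at $0$, in which the active index set contracts to $\{0,1\}$.
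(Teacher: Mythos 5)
Your proposal is correct and follows essentially the same route as the paper, which proves this corollary by mimicking the proof of Corollary \ref{cor-int}: integrate the inequalities/equalities from the proof of Proposition \ref{prop-indicator} against $d\varphi$ and invoke the characterisation in Proposition \ref{prop-det-proj}. Your additional care with the Fubini interchange, the strict positivity of the coefficients, and the degenerate case $C_\varphi=0$ (where $d\varphi$ is a point mass at $0$ and the active set contracts to $\{0,1\}$) only makes explicit details the paper leaves implicit.
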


\section{Closedness of convex cones}
In this section, we prove Theorem \ref{prop-closed} and Corollary \ref{cor-real-line}.

\begin{proof}[Proof of Theorem \ref{prop-closed}]
Let $\mathscr{H}_\V \subset \mathscr{H}$ denote the closed linear span of vectors in $\V$:
\[
\mathscr{H}_\V : = \overline{\spann} (\V) \subset \mathscr{H}.
\] 
Let $\mathscr{H}(\mu, \mathcal{W})$ denote the closed linear span of the sequence $\{(t^n, w_n)\}_{n=0}^\infty$ in  the real Hilbert space $L^2(\mu) \oplus \mathscr{K}$:
\[
\mathscr{H}(\mu, \mathcal{W}):=  \overline{\spann}\Big\{(t^n, w_n)\Big| n\in \N\Big\} \subset L^2(\mu) \oplus \mathscr{K}.
\]
The equalities \eqref{v-n-inner} imply that the map $(t^n, w_n) \mapsto \lambda_n v_n$ can be extended to a linear isometric isomorphism between $\mathscr{H}(\mu, \mathcal{W})$ and $\mathscr{H}_\V$. To complete the proof of Theorem~\ref{prop-closed}, we shall prove that the following set
\[
\mathcal{C}(\mu, \mathcal{W}): =\bigg\{\sum_{n=0}^\infty a_n(t^n, w_n) \Big|a_n\geq 0,\text{\,the series $\sum_{n=0}^\infty a_n (t^n, w_n)$ converges in $L^2(\mu) \oplus \mathscr{K}$}\bigg\}
\]
is closed in $L^2(\mu) \oplus \mathscr{K}$.

Note first that for any element $(f, u) \in \mathcal{C}(\mu, \mathcal{W}) \subset L^2(\mu) \oplus \mathscr{K}$ with
\begin{align}\label{L-2-eq}
f = \sum_{n = 0}^\infty a_n t^n,  \quad a_n \ge 0,
\end{align}
where the series converges in $L^2(\mu)$ and the equality is understood as elements in $L^2(\mu)$,  there exists a subsequence $\{N_k\}_{k=0}^\infty$ of positive integers with $0< N_0< N_1 < \cdots$ such that 
\begin{align}\label{pt-limit}
 \lim_{k\to\infty} \sum_{n=0}^{N_k} a_n t^n  = f(t)<\infty \quad \text{for $\mu$-a.e. $t\in [0, s_\mu)$}.
\end{align}
By the assumption $a_n\ge 0$ for all $n\in \N$,  for any $t_0\in [0, s_\mu)$ such that the limit equality \eqref{pt-limit} holds, we have 
\[
\lim_{k\to\infty} \sum_{n=0}^{N_k} a_n t_0^n = \sum_{n=0}^\infty a_n t_0^n: = \lim_{N\to\infty} \sum_{n=0}^N a_n t_0^n. 
\]
It follows that we have the following $\mu$-almost everywhere equality:
\begin{align}\label{pt-eq}
f(t)  = \sum_{n = 0}^\infty a_n t^n <\infty \quad  \text{for $\mu$-a.e. $t\in [0, s_\mu)$}.
\end{align}
By the definition of $s_\mu$ and the assumption $a_n\ge 0$ for all $n\in\N$, we even have
\begin{align*}
\sum_{n = 0}^\infty a_n t^n<\infty \quad \text{ for all $t\in [0, s_\mu)$}.
\end{align*}

Now let $\{(f_k, u_k)\}_{k = 0}^\infty$ be a sequence in $\mathcal{C}(\mu, \mathcal{W})$:
\[
(f_k, u_k)=\Big(\sum_{n=0}^\infty a_n^{(k)}t^n, \sum_{n=0}^\infty a_n^{(k)}w_n\Big)\in \mathcal{C}(\mu, \mathcal{W}),
\]
and assume that $(f_\infty, u_\infty) \in L^2(\mu) \oplus \mathscr{K}$ is the limit of the sequence $\{(f_k, u_k)\}_{k = 0}^\infty$:
\begin{equation}\label{E:L2conv-pos}
(f_k, u_k)\xrightarrow[k\to\infty]{\text{in $L^2(\mu) \oplus \mathscr{K}$}} (f_\infty, u_\infty).
\end{equation}
We want to show that $(f_\infty, u_\infty) \in \mathcal{C}(\mu, \mathcal{W})$. Indeed, \eqref{E:L2conv-pos} implies, up to passing to a subsequence if necessary, that
\begin{align}\label{pt-cv-seq}
f_k(t)  = \sum_{n=0}^\infty a_n^{(k)}t^n\xrightarrow{k\to\infty} f_\infty(t)\quad \text{for $\mu$-a.e. $t\in \R_{+}$}.
\end{align}
Note that the condition  $\mu\big(\R_{+} \setminus [0, s_\mu)\big) = 0$ implies in particular that the support $\supp(\mu)$ is an infinite subset of $\R_{+}$. By \eqref{pt-cv-seq}, there exists a sequence $\tau_0< \tau_1< \cdots$ in $[0, s_\mu)$ such that
\[
\lim_{m\to\infty} \tau_m = s_\mu
\]
and 
\[
f_k(\tau_m) = \sum_{n=0}^\infty a_n^{(k)}\tau_m^n \xrightarrow{k\to\infty} f_\infty(\tau_m) \in [0, \infty) \quad \text{for all $m\in \N$}.
\]
It follows that
\[
M_m: = \sup_{k} \sum_{n=0}^\infty a_n^{(k)}\tau_m^n \in [0, \infty) \quad \text{for all $m\in \N$}.
\]
Since all coefficients $a_n^{(k)} \ge 0$, we have
\begin{align}\label{M-bdd}
0\le a_n^{(k)} \tau_m^n \le M_m \quad \text{for all $m, k \in\N$}.
\end{align}
Using the compactness of $[0, M_m]$ and the canonical Cantor's diagonal method,  we may extract a subsequence of positive integers, denoted by  $0< k_1<k_2<k_3<\cdots$, such that  for all $m, n\in \N$, the following limits exist:
\[
\lim_{i\to\infty} a_n^{(k_i)} \tau_m^n \in [0, M_m].
\]
But this means that the limits $\lim_{i\to\infty} a_n^{(k_i)}$ exist for all $n\in \N$ and moreover,
\begin{align}\label{def-a-inf}
a_n^{(\infty)}: =\lim_{i\to\infty} a_n^{(k_i)} \in [0, \frac{M_m}{\tau_m^n}]  \quad \text{for all $m, n \in \N$}.
\end{align}
Now for any $t\in [0, \tau_{m-1}]$, we have
\[
0\le \frac{t}{\tau_{m}}\le \frac{\tau_{m-1}}{\tau_{m}}<1
\]
and hence by \eqref{M-bdd},
\[
\sum_{n=0}^\infty \sup_i |a_n^{(k_i)} t^n | \le \sum_{n = 0}^\infty  \frac{M_m}{\tau_m^n}  t^n \le \sum_{n=0}^\infty M_m \left(\frac{\tau_{m-1}}{\tau_m}\right)^n<\infty \quad \text{for all $t\in [0, \tau_{m-1}]$}.
\]
Therefore, by the Dominated Convergence Theorem,
\begin{align}\label{DCT}
\lim_{i\to\infty}\sum_{n=0}^\infty a_n^{(k_i)} t^n  = \sum_{n=0}^\infty \lim_{i\to\infty} a_n^{(k_i)} t^n   =  \sum_{n=0}^\infty a_n^{(\infty)} t^n  \quad \text{for all $t\in [0, \tau_{m-1}]$.}
\end{align}
Combining \eqref{pt-eq}, \eqref{pt-cv-seq} and \eqref{DCT}, we obtain
\[
f_\infty(t) = \sum_{n=0}^\infty a_n^{(\infty)} t^n  \quad \text{for $\mu_{m-1}$-a.e. $t\in [0, \tau_{m-1}]$},
\]
where $\mu_{m-1} = \mu|_{[0, \tau_{m-1}]}$ is the restiction of the measure $\mu$ on $[0, \tau_{m-1}]$. Since $m$ is arbitrary, we have
\begin{align}\label{lim-pt-eq}
f_\infty(t)  = \sum_{n= 0}^\infty a_n^{(\infty)} t^n  \quad \text{for $\mu$-a.e. $t\in [0, s_\mu)$}.
\end{align}
 Combining \eqref{lim-pt-eq} with the condition  $\mu\big(\R_{+} \setminus [0, s_\mu)\big) = 0$, we obtain 
\begin{align}\label{lim-ae-eq}
f_\infty(t)  = \sum_{n= 0}^\infty a_n^{(\infty)} t^n  \quad \text{for $\mu$-a.e. $t\in \R_{+}$}.
\end{align}

We then need to show that the $\mu$-almost everywhere equality \eqref{lim-ae-eq} implies the following $L^2$-norm convergence:
\begin{align}\label{f-inf-L2}
\lim_{N\to\infty}\Big\|\sum_{n=0}^N a_n^{(\infty)} t^n - f_\infty\Big\|_{L^2(\mu)} = 0.
\end{align}
But this again follows from the Dominated Convergence Theorem. Indeed, since   $a_n^{(\infty)} \ge 0$  we have
\[
\sup_{N}\Big(\sum_{n=N+1}^\infty a_n^{(\infty)} t^n\Big)^2  \le \Big(\sum_{n=0}^\infty a_n^{(\infty)} t^n\Big)^2  = f_\infty(t)^2.
\]
The above inequality combined with  the assumption $f_\infty\in L^2(\R_{+}, \mu)$ implies
\[
\lim_{N\to\infty}\Big\|\sum_{n=0}^N a_n^{(\infty)} t^n - f_\infty\Big\|_{L^2(\mu)}^2  = \lim_{N\to\infty}  \int_{\R_{+}} \Big(\sum_{n=N+1}^\infty a_n^{(\infty)} t^n\Big)^2d\mu(t) = 0.
\]

Finally, it remains to show that
\begin{align}\label{u-inf-lim}
 \lim_{N\to\infty}\Big\|\sum_{n=0}^N a_n^{(\infty)} w_n - u_\infty \Big\| = 0.
\end{align}
By the assumption \eqref{ab-cond} and the fact that $a_n^{(\infty)} \ge 0$ for all $n\in \N$, we have 
\[
\Big\|\sum_{n= N}^M a_n^{(\infty)} w_n\Big\| \le C \Big\| \sum_{n= N}^M a_n^{(\infty)} t^n \Big\|_{L^2(\mu)} \text{for all $N, M\in\N$ with $N\le M$}.
\]
Thus  the convergence of the series $\sum_{n=0}^\infty a_n^{(\infty)} t^n$ in the space $L^2(\mu)$ implies the convergence of the series $\sum_{n=0}^\infty a_n^{(\infty)} w_n$ in $\mathscr{K}$. 
  By \eqref{E:L2conv-pos}, we have
\begin{align}\label{u-inf}
u_\infty = \lim_{k\to\infty} \sum_{n=0}^\infty a_n^{(k)} w_n.
\end{align}
Let $\{k_i\}_{i}$ be the subsequence of positive integers chosen as above.
For any $N\in \N$ and any $i\in \N$, by the assumption \eqref{ab-cond} and the fact that $a_n^{(k_i)} \ge 0$, we have
\begin{align}\label{tr-ineq}
\begin{split}
 \Big\|\sum_{n=0}^N a_n^{(\infty)} w_n - u_\infty \Big\| \le &  \Big\| \sum_{n=0}^N a_n^{(\infty)} w_n -  \sum_{n=0}^N a_n^{(k_i)} w_n\Big\| + \Big\| \sum_{n=N+1}^\infty a_n^{(k_i)} w_n\Big\|
\\
& + \Big\|   \sum_{n=0}^\infty a_n^{(k_i)} w_n -u_\infty\Big\|
\\
\le&    \sum_{n=0}^N | a_n^{(\infty)} - a_n^{(k_i)}| \cdot \| w_n\| + C \Big\| \sum_{n=N+1}^\infty a_n^{(k_i)} t^n \Big\|_{L^2(\mu)}
\\
& + \Big\|   \sum_{n=0}^\infty a_n^{(k_i)} w_n -u_\infty\Big\|.
\end{split}
\end{align}
Combining \eqref{def-a-inf}, \eqref{u-inf} and \eqref{tr-ineq}, for any $N\in \N$, we have
\begin{align*}
\Big\|\sum_{n=0}^N a_n^{(\infty)} w_n - u_\infty \Big\|  \le C   \liminf_{i\to\infty}\Big\| \sum_{n=N+1}^\infty a_n^{(k_i)} t^n \Big\|_{L^2(\mu)}.
\end{align*}
Note also that \eqref{E:L2conv-pos}  and \eqref{f-inf-L2} together imply 
\[
\lim_{i\to\infty} \Big\| \sum_{n = 0}^\infty a_n^{(k_i)} t^n - \sum_{n = 0}^\infty a_n^{(\infty)} t^n \Big\|_{L^2(\mu)}= 0.
\]
Therefore, for any fixed $N\in \N$, we have
\[
\liminf_{i\to\infty}\Big\| \sum_{n=N+1}^\infty a_n^{(k_i)} t^n \Big\|_{L^2(\mu)} = \Big\| \sum_{n=N+1}^\infty a_n^{(\infty)} t^n \Big\|_{L^2(\mu)}
\]
and hence
\[
\Big\|\sum_{n=0}^N a_n^{(\infty)} w_n - u_\infty \Big\| \le C   \Big\| \sum_{n=N+1}^\infty a_n^{(\infty)} t^n \Big\|_{L^2(\mu)}.
\]
Thus we have
\[
\limsup_{N\to\infty} \Big\|\sum_{n=0}^N a_n^{(\infty)} w_n - u_\infty \Big\|  \le C  \limsup_{N\to\infty}  \Big\| \sum_{n=N+1}^\infty a_n^{(\infty)} t^n \Big\|_{L^2(\mu)} = 0.
\]
This completes the proof of the limit relation \eqref{u-inf-lim}. 
\end{proof}

\begin{remark*}
The implication \eqref{L-2-eq} $\Longrightarrow$ \eqref{pt-eq} relies heavily on the non-negativity of the functions $a_n t^n$ on $[0, s_\mu)$. In general, the equality  $f = \sum_{n = 0}^\infty f_n$ in the Hilbert space $L^2(\mu)$
does not imply the $\mu$-almost everywhere equality $f(t) \stackrel{\text{$\mu$-a.e.}}{=\joinrel=} \sum_{n= 0}^\infty f_n(t)$. 
\end{remark*}

\begin{proof}[Proof of Corollary \ref{cor-real-line}]
By writing $\mu= \nu|_{\R_{+}}$, we have 
\[
L^2(\R, \nu) = L^2(\R_{+}, \mu) \oplus L^2(\R_{-}, \nu).
\]
The assumption \eqref{neg-less-pos} implies that for any finitely supported sequence $\{a_n\}_{n=0}^\infty$ of non-negative numbers, we have
\begin{align*}
\Big\|\sum_{n=0}^\infty a_n t^n \Big\|_{L^2(\R_{-}, \nu)}^2 &  = \sum_{n, m \ge 0}  a_n a_m \int_{\R_{-}} t^{m + n} d\nu(t)
\\
&  \le \sum_{m,n \ge 0 \atop \text{$m+n$ is even}}  a_n a_m \int_{\R_{+}} t^{m+n} d\mu(t)
\\
& \le  C \sum_{m,n \ge 0}  a_n a_m \int_{\R_{+}} t^{m+n} d\mu(t) =  C\Big\|\sum_{n=0}^\infty a_n t^n \Big\|_{L^2(\R_{+}, \mu)}^2,
\end{align*}
where in the first inequality, we have used the inequalities
\[
\int_{\R_{-}} t^{2k+1} d\nu(t) \le 0 \quad \text{for all $k\in \N$}.
\]
Now Corollary \ref{cor-real-line} follows immediately from Theorem \ref{prop-closed}.
\end{proof}

\section{Characterization of the metric projection}

\subsection{Proof of Proposition \ref{prop-det-proj}}
Recall that $\mathcal{C}[[\V]]$ is assumed to be closed.  For any $w\in\mathscr{H}$, by a classical result on the metric projections onto a closed convex set (cf. \cite[Lemma 1.1]{Zarantonello-1971}), $P_{\mathcal{C}[[\V]]} (w)$ is uniquely determined by
\begin{equation}\label{E:projection-convex-set}
\left\{
  \begin{array}{ll}
    P_{\mathcal{C}[[\V]]} (w) \in\mathcal{C}[[\V]], &\vspace{2mm} \\
    \langle w-P_{\mathcal{C}[[\V]]} (w), u-P_{\mathcal{C}[[\V]]}(w)\rangle \leq 0, & \hbox{ for all $u\in\mathcal{C}[[\V]]$.}
  \end{array}
\right.
\end{equation}
By the assumption that $\mathcal{C}[[\V]]$ is a closed convex cone, we may use \cite[Lemma 3]{IM-1991} to obtain
\[
\langle w-P_{\mathcal{C}[[\V]]} (w),P_{\mathcal{C}[[\V]]} (w)\rangle=0.
\]
This combined with \eqref{E:projection-convex-set} implies that $P_{\mathcal{C}[[\V]]} (w)$ is uniquely determined by
\begin{align}\label{final-cond}
\left\{
  \begin{array}{ll}
    P_{\mathcal{C}[[\V]]} (w) \in\mathcal{C}[[\V]], &  \vspace{2mm}\\
    \langle w-P_{\mathcal{C}[[\V]]} (w) , P_{\mathcal{C}[[\V]]} (w)\rangle =0, & \vspace{2mm} \\
    \langle w-P_{\mathcal{C}[[\V]]} (w),u\rangle\leq 0, & \hbox{ for all $u\in\mathcal{C}[[\V]]$.}
  \end{array}
\right.
\end{align}
By the definition of $\mathcal{C}[[\V]]$, the condition
\[
\langle w-P_{\mathcal{C}[[\V]]}(w), u\rangle \leq 0 \quad \text{for any $u\in \mathcal{C}[[\V]]$}
\]
is satisfied if and only if
\begin{align}\label{inner-v-n}
\langle w-P_{\mathcal{C}[[\V]]}(w), v_n\rangle\leq 0 \quad  \text{for all $n\in \N$}.
\end{align}
By writing
\[
P_{\mathcal{C}[[\V]]}(w)=\sum_{n=0}^\infty a_nv_n \quad \text{with $a_n\ge 0$ for all $n\in \N$,}
\]
we  have
\begin{align}\label{0-sum}
0 =  \langle w-P_{\mathcal{C}[[\V]]} (w) , P_{\mathcal{C}[[\V]]} (w)\rangle =   \sum_{n =0}^\infty  a_n \langle w-P_{\mathcal{C}[[\V]]} (w) , v_n \rangle.
\end{align}
Combining \eqref{inner-v-n} and \eqref{0-sum}, we obtain
\begin{align}\label{pos-a-n}
\langle w-P_{\mathcal{C}[[\V]]} (w) , v_n \rangle  =0 \quad \text{for all those $n\in\N$ such that $a_n>0$.}
\end{align}
On the other hand, \eqref{pos-a-n} clearly implies  the equality \eqref{0-sum}.
Therefore, the condition \eqref{final-cond} is equivalent to
\begin{align*}
\left\{
  \begin{array}{ll}
\displaystyle    P_{\mathcal{C}[[\V]]}(w)=\sum_{n=0}^\infty a_nv_n\in \mathcal{C}[[\V]], &  \vspace{2mm}\\
    \displaystyle \sum_{k\in \N} a_k \langle v_k, v_n\rangle  =  \langle w, v_n\rangle, & \hbox{for all $n\in\mathbb{N}$ such that $a_n >0$,} \vspace{2mm}\\
\displaystyle   \sum_{k\in \N} a_k \langle v_k, v_n\rangle \ge \langle w, v_n\rangle, & \hbox{for all $n\in\mathbb{N}$.}
  \end{array}
\right.
\end{align*}
This completes the proof of Proposition \ref{prop-det-proj}.

\subsection{Proof of Corollary \ref{prop-matrix}}

If $\V \subset \mathscr{H}$ is a finite set, then the convex cone $\mathcal{C}[\V]$ generated by $\V$ is always closed, cf. e.g. \cite[p. 236]{Zarantonello-1971} and \cite[p. 25]{Borwein-2006}.

\begin{lemma}\label{L:existence-rv}
Let $X_1,\cdots, X_n$ be linear independent real-valued random variables, all of which are of finite second moment. Then for any $c=(c_1,\cdots,c_n)\in\mathbb{R}^n$, there exists a real-valued random variable $Y$ of finite second moment such that
\begin{equation}\label{E:covariance}
c_i=\mathbb{E}[YX_i],\quad 1\leq i\leq n.
\end{equation}
\end{lemma}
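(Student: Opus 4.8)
The plan is to produce $Y$ inside the finite-dimensional subspace $V := \spann(X_1, \dots, X_n)$ of the real Hilbert space $L^2(\Omega, \mathcal{F}, \PP)$, exploiting the elementary fact that the Gram matrix of linearly independent vectors in an inner product space is invertible.

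First I would record that $X_1, \dots, X_n$, having finite second moments, are genuine vectors in $L^2 = L^2(\Omega, \mathcal{F}, \PP)$ equipped with the inner product $\langle U, W\rangle = \E[UW]$, and that their linear independence is to be understood in this space (i.e. modulo $\PP$-null sets). I would then form the Gram matrix $G = \big(\E[X_i X_j]\big)_{1 \le i, j \le n}$ and check that it is symmetric positive definite: for any $\alpha = (\alpha_1,\dots,\alpha_n) \in \R^n$ one has $\alpha^{T} G \alpha = \big\| \sum_{j} \alpha_j X_j \big\|_{L^2}^2 \ge 0$, and equality would force $\sum_j \alpha_j X_j = 0$ in $L^2$, hence $\alpha = 0$ by linear independence. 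In particular $G$ is non-singular.

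Next, given $c = (c_1, \dots, c_n) \in \R^n$, I would set $\alpha := G^{-1} c \in \R^n$ and define $Y := \sum_{j=1}^n \alpha_j X_j$. Being a finite linear combination of elements of $L^2$, $Y$ is a real-valued random variable of finite second moment, and for each $i$,
\[
\E[Y X_i] = \sum_{j=1}^n \alpha_j \E[X_j X_i] = (G\alpha)_i = c_i,
\]
which is precisely \eqref{E:covariance}.

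I do not expect any real obstacle here; the only point requiring care is that "linearly independent" must mean independence in $L^2$, since that is exactly what upgrades $G$ from positive semidefinite to positive definite. If one prefers to avoid inverting $G$ explicitly, the same computation shows that the linear map $L^2 \to \R^n$, $Y \mapsto (\E[Y X_i])_{i=1}^n$, restricts to an injective map on the $n$-dimensional subspace $V$, hence maps $V$ onto $\R^n$, which again yields the desired $Y \in V$.
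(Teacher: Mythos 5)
Your proof is correct and takes essentially the same approach as the paper: both seek $Y$ in $\spann(X_1,\dots,X_n)$ and rest on the fact that linear independence in $L^2$ makes the Gram matrix non-singular. The only cosmetic difference is that you invert the Gram matrix directly, whereas the paper first orthogonalizes the $X_i$ via Gram--Schmidt and then solves the resulting diagonal system.
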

\begin{proof}
Set
\[
A=(\mathbb{E}(X_iX_j))_{1\leq i,j\leq n}=
\mathbb{E}\bigg[\left(
\begin{array}{c}
X_1 \\
\vdots \\
X_n \\
\end{array}
\right)
\left(X_1 \, \cdots\, X_n
\right)
\bigg].
\]
Since $X_1,\cdots, X_n$ are linear independent, by Schmidt orthogonalization method, there exists a non-singular matrix $P$ such that the random variables $Z_i$'s defined by
\[
\left(
  \begin{array}{c}
    Z_1 \\
    \vdots \\
    Z_n \\
  \end{array}
\right)
=P\left(
    \begin{array}{c}
      X_1 \\
      \vdots \\
      X_n \\
    \end{array}
  \right)
\]
are orthogonal and $\E(Z_i^2)>0$ for all $i\in\{1, \cdots, n\}$. Note that the condition \eqref{E:covariance} can be written as
\begin{equation}\label{E:newcovergence}
\left(
  \begin{array}{c}
    c_1 \\
    \vdots \\
    c_n \\
  \end{array}
\right)
=\mathbb{E}
\bigg[
\left(
  \begin{array}{c}
    X_1 \\
    \vdots \\
    X_n \\
  \end{array}
\right)Y
\bigg].
\end{equation}
Since $P$ is non-singular, (\refeq{E:newcovergence}) is equivalent to
\begin{equation}\label{E:newcovergence02}
\left(
  \begin{array}{c}
    c_1' \\
    \vdots \\
    c_n' \\
  \end{array}
\right): =
P\left(
  \begin{array}{c}
    c_1 \\
    \vdots \\
    c_n \\
  \end{array}
\right)
=P\mathbb{E}
\bigg[
\left(
  \begin{array}{c}
    X_1 \\
    \vdots \\
    X_n \\
  \end{array}
\right)Y
\bigg]
=\mathbb{E}
\bigg[
\left(
  \begin{array}{c}
    Z_1 \\
    \vdots \\
    Z_n \\
  \end{array}
\right)Y
\bigg].
\end{equation}
If we set
\[
Y=\sum_{i=1}^n \frac{c_i' Z_i}{\E(Z_i^2)},
\]
then $Y$ satisfies the condition (\refeq{E:newcovergence02}). This completes the whole proof.
\end{proof}

\begin{proof}[Proof of Corollary \ref{prop-matrix}]
Since $A$ is a non-singular positive definite matrix, there exist real-valued square-integrable and linear independent random variables $X_1,\cdots,X_n$ such that
\[
\mathbb{E}(X_i)=0 \an A=(\mathbb{E}(X_iX_j))_{1\leq i,j\leq n}.
\]
By Lemma \ref{L:existence-rv}, there exists a real-valued square-integrable random variable $Y$ such that
\[
c_i=\mathbb{E}(X_iY),\quad 1\leq i\leq n.
\]
The convex cone in the associated Hilbert space of square-integrable random variables generated by $X_1, \cdots, X_n$ is
\[
\mathcal{C}(X_1,\cdots,X_n)=\Big\{\sum_{i=1}^n b_iX_i\Big|b_i\geq 0\Big\}.
\]
Since $\mathcal{C}(X_1,\cdots,X_n)$ is closed (cf. \cite[p. 236]{Zarantonello-1971} and \cite[p. 25]{Borwein-2006}),  there exists a unique  $Z\in \mathcal{C}(X_1,\cdots,X_n)$ closest to $Y$. Write
\[
Z=\sum_{i=1}^n b_iX_i=\sum_{i\in S}b_iX_i \quad \text{ with $b_i>0$ for all $i\in S$}.
\]
By Proposition \ref{prop-det-proj},  $S$ and  the coefficients $(b_i)_{i\in S}$ are uniquely determined by
\begin{align*}
\left\{
  \begin{array}{ll}
    b_i> 0, &  \hbox{for all $i\in S$,}\\
\displaystyle    \langle \sum_{i\in S} b_iX_i, X_j\rangle = \langle Y, X_j\rangle=c_j, & \hbox{for all $j\in S$,}
\\
\displaystyle     \langle \sum_{i\in S} b_iX_i, X_j\rangle \geq \langle Y, X_j\rangle=c_j,&  \text{for all $j \in \{1, 2,\cdots, n\}$}.
  \end{array}
\right.
\end{align*}
In other words, $S$ and $(b_j)_{j\in S}$ are uniquely determined  by
\begin{align*}
\left\{
  \begin{array}{ll}
    b_i> 0, &  \hbox{for all $i\in S$,}\\
   \displaystyle  \sum_{i\in S}a_{ij}b_i=c_j, & \hbox{for all $j\in S$,} \\
   \displaystyle  \sum_{i\in S}a_{ij}b_i\geq c_j, & \hbox{for all $j\in \{1, 2, \cdots, n\}$.}
  \end{array}
\right.
\end{align*}
By noting $a_{ij}=a_{ji}$, we complete the whole proof.
\end{proof}

\section{Applications in function theory}

\subsection{Power functions and signed power functions}

\begin{proof}[Proof of Proposition \ref{prop-h-alpha}]
Let $\alpha\in[0,2), m\in \mathbb{N}$ and  $(a_m,b_m)$ be defined as \eqref{a-b-m}.  By Proposition \ref{prop-det-proj}, for proving the equality \eqref{eq-met-proj}, it suffices to verify
\begin{equation}\label{E:inequlity}
\Big\langle a_mt^{2m}+b_mt^{2m+2}, t^j \Big\rangle_{L^2([-1,1])} \geq \Big\langle |t|^{2m+\alpha}, t^j\Big\rangle_{L^2([-1,1])}\quad \text{for all $j\in\mathbb{N}$}
\end{equation}
and
\begin{equation}\label{E:equality}
\Big\langle a_mt^{2m}+b_mt^{2m+2}, t^j\Big\rangle_{L^2([-1,1])} =\Big\langle |t|^{2m+\alpha}, t^j\Big\rangle_{L^2([-1,1])} \quad \text{for $j\in \{2m,2m+2\}$}.
\end{equation}
If $j$ is an odd number, then \eqref{E:inequlity} holds since both sides of \eqref{E:inequlity} vanish; the same is true for \eqref{E:equality}. So we now focus on even numbers $j  = 2k$ with $k \ge 0$.
Note that $(a_m,b_m)$ defined by \eqref{a-b-m} is in fact the solution of the linear equation
\begin{equation}\label{E:linearequation}
\left(
  \begin{array}{cc}
    \frac{2}{4m+1} & \frac{2}{4m+3} \vspace{2mm}\\
    \frac{2}{4m+3} & \frac{2}{4m+5}
  \end{array}
\right)
\left(
  \begin{array}{c}
    a_m \vspace{2mm}\\
    b_m
  \end{array}
\right)
=\left(
   \begin{array}{c}
     \frac{2}{4m+1+\alpha} \vspace{2mm}\\
     \frac{2}{4m+3+\alpha}
   \end{array}
 \right).
\end{equation}
Since for any even number $j= 2k$, we have
\[
\Big\langle |t|^\beta,t^{2k}\Big\rangle_{L^2([-1,1])}=\frac{2}{\beta+2k+1}\quad \text{for all $\beta\geq 0$}
\]
and
\[
\Big\langle t^{2\ell},t^{2k}\Big\rangle_{L^2([-1,1])}=\frac{2}{2 \ell+2k+1}\quad \text{for all $k, \ell \in \N$},
\]
the equality  \eqref{E:linearequation} is equivalent to the equality \eqref{E:equality}. It remains to show the inequalities \eqref{E:inequlity} for all even numbers $j=2k$ with $k\ge 0$. That is, we need to show
\[
\frac{2 a_m}{2m+2k+1}+\frac{ 2 b_m}{2m+2k+3}\geq \frac{2}{2m+2k+\alpha+1}\quad \text{for all $ k\in\mathbb{N}$}.
\]
Set
\begin{align*}
D_k:=(4m+1+\alpha)(4m+3+\alpha)(2m+2k+1)(2m+2k+3)(2m+2k+\alpha+1)\\
\times \bigg(\frac{2a_m}{2m+2k+1}+\frac{2b_m}{2m+2k+3}-\frac{2}{2m+2k+\alpha+1}\bigg).
\end{align*}
Then we only need to  show that
\begin{align}\label{D-k-pos}
D_k\geq 0 \quad \text{for all $k\in\mathbb{N}$}.
\end{align}
Write
\begin{align}\label{t-x-def}
\tau=4m+3  \an x=4k,
\end{align}
 then
\begin{align*}
D_k= & (2-\alpha)(4m+1)(4m+3)(2m+2k+3)(2m+2k+\alpha+1)\\
& + \alpha(4m+3)(4m+5)(2m+2k+1)(2m+2k+\alpha+1)\\
&-2(4m+1+\alpha)(4m+3+\alpha)(2m+2k+1)(2m+2k+\alpha+1)\\
=& (2-\alpha)(\tau-2)\tau\bigg(\frac{\tau+3}{2}+\frac{x}{2}\bigg)\bigg(\frac{\tau-3}{2}+\frac{x}{2}+\alpha+1\bigg)\\
&+\alpha \tau(\tau+2)\bigg(\frac{\tau-3}{2}+\frac{x}{2}+1\bigg)\bigg(\frac{\tau-3}{2}+\frac{x}{2}+\alpha+1\bigg)\\
&-2(\tau-2+\alpha)(\tau+\alpha)\bigg(\frac{\tau-3}{2}+\frac{x}{2}+1\bigg)\bigg(\frac{\tau-3}{2}+\frac{x}{2}+3\bigg)\\
= & \frac{1}{4}(Ax^2+Bx+C),
\end{align*}
where
\[
A=2\alpha(2-\alpha), B=4\alpha(\alpha-2)(\tau-1) \an C=2\alpha(2-\alpha)(\tau^2-2\tau-3).
\]
Therefore,
\begin{align*}
D_k=\frac{1}{2}\alpha (2-\alpha)[(x-(\tau-1))^2-4].
\end{align*}
By substituting \eqref{t-x-def} into the above equality, we have
\[
D_k=2\alpha(2-\alpha)[(2k-2m-1)^2-1].
\]
By observing
\[
 (2k-2m-1)^2-1\geq 0 \quad \text{for any $k,m\in\mathbb{N}$}
\]
 and using the assumption $\alpha \in [0, 2)$, we obtain the desired inequalities \eqref{D-k-pos}.  This completes the proof of the equality \eqref{eq-met-proj}.

Now we proceed to the proof of the equality \eqref{best-dist}. We have
\begin{align*}
[d(h_{2m+\alpha}, \mathcal{A}_{+})]^2   =&  \int_{[-1,1]} ( |t|^{2m+\alpha} - a_m t^{2m} - b_m t^{2m +2})^2 dt
\\
=& 2 \int_{[0,1]} ( t^{2m+\alpha} - a_m t^{2m} - b_m t^{2m +2})^2 dt
\\
= &  \frac{2}{4m + 2\alpha +1}   +  \frac{2 a_m^2}{4m+1} + \frac{2 b_m^2}{4m+5}
\\
& +  \frac{4 a_m b_m}{4m+3}  - \frac{4 a_m}{4m+\alpha + 1} - \frac{4 b_m}{4m+\alpha +3}.
\end{align*}
By substituting $a_m, b_m$ defined in \eqref{a-b-m} and by writing again $\tau= 4m+3$, we obtain
\begin{align*}
[d(h_{2m+\alpha}, \mathcal{A}_{+})]^2 =&  \frac{2}{\tau +2\alpha-2} + \frac{2}{\tau-2} \Big(\frac{(\tau-2)\tau}{(\tau+\alpha-2)(\tau+\alpha)}\frac{2-\alpha}{2}\Big)^2
\\
&  + \frac{2}{\tau+2} \Big( \frac{\tau(\tau+2)}{(\tau+\alpha-2)(\tau+\alpha)}\frac{\alpha}{2}\Big)^2
\\
& + \frac{4}{\tau} \Big(\frac{(\tau-2)\tau}{(\tau+\alpha-2)(\tau+\alpha)}\frac{2-\alpha}{2}\Big)  \Big( \frac{\tau(\tau+2)}{(\tau+\alpha-2)(\tau+\alpha)}\frac{\alpha}{2}\Big)
\\
&- \frac{4}{\tau+\alpha -2}  \Big(\frac{(\tau-2)\tau}{(\tau+\alpha-2)(\tau+\alpha)}\frac{2-\alpha}{2}\Big)   - \frac{4}{\tau+\alpha} \Big( \frac{\tau(\tau+2)}{(\tau+\alpha-2)(t+\alpha)}\frac{\alpha}{2}\Big)
\\
 = & \frac{2}{\tau +2\alpha-2} +   \frac{2\tau}{(\tau +\alpha -2)^2(\tau +\alpha)^2}  H(\alpha, \tau),
\end{align*}
where
\begin{align*}
H(\alpha, \tau) =& \frac{(2-\alpha)^2}{4} \tau(\tau-2)  + \frac{\alpha^2}{4} \tau(\tau+2) + \frac{\alpha(2 - \alpha)}{2} (\tau-2)(\tau+2)
\\
& - (2-\alpha)(\tau-2)(\tau+\alpha) - \alpha (\tau+2)(\tau + \alpha -2)
\\
 = &-\tau^2 + (2 - 2\alpha )\tau + 2 \alpha (2-\alpha) = -\tau(\tau + 2 \alpha -2) + 2 \alpha (2 - \alpha).
\end{align*}
Therefore, we have
\begin{align*}
[d(h_{2m+\alpha}, \mathcal{A}_{+})]^2 =&   \frac{2}{( \tau +2\alpha-2)(\tau +\alpha -2)^2(\tau+\alpha)^2} K(\alpha, \tau)
\end{align*}
with $K(\alpha, \tau)$ given by
\begin{align*}
K(\alpha, \tau) & = (\tau+\alpha -2)^2(\tau +\alpha)^2 + \tau (\tau+2\alpha-2) H(\alpha, \tau)
\\
& = [ \tau( \tau + 2\alpha -2) - \alpha (2-\alpha)]^2 - \tau^2(\tau + 2 \alpha -2)^2 + 2 \alpha (2 - \alpha) \tau (\tau + 2 \alpha -2)
\\
&= \alpha^2 (2 - \alpha)^2.
\end{align*}
Thus we obtain
\begin{align*}
[d(h_{2m+\alpha}, \mathcal{A}_{+})]^2 =&   \frac{2 \alpha^2 (2 - \alpha)^2}{(\tau +2\alpha-2)(\tau +\alpha -2)^2(\tau +\alpha)^2}
\\
=&  \frac{2 \alpha^2 (2 - \alpha)^2}{(4m +2\alpha+1)(4m +\alpha +1)^2(4m +\alpha+3)^2}.
\end{align*}
This completes the proof of the equality \eqref{best-dist}.
\end{proof}

\begin{proof}[Proof of Corollary \ref{cor-int}]
Fix $m\in\N$ and a positive Radon measure $\nu$  on $[0,2]$. Set
\[
g_{m,\nu}: = \int_{[0,2]} h_{2m+\alpha} d\nu(\alpha)
\]
and
\[
A_{m, \nu} := \int_{[0,2]} a_m(\alpha) d\nu(\alpha) \ge 0 , \, B_{m,\nu}: = \int_{[0,2]} b_m(\alpha) d\nu(\alpha) \ge 0.
\]
By integrating the inequalities \eqref{E:inequlity} and the equalities  \eqref{E:equality} against the measure $\nu$, we obtain
\begin{align*}
\Big\langle A_{m,\nu} t^{2m}+B_{m,\nu}t^{2m+2}, t^j \Big\rangle_{L^2([-1,1])} &\geq \Big\langle g_{m,\nu}, t^j\Big\rangle_{L^2([-1,1])}\quad \text{for all $j\in\mathbb{N}$}
\end{align*}
and
\begin{align*}
\Big\langle A_{m,\nu}t^{2m}+B_{m,\nu}t^{2m+2}, t^j\Big\rangle_{L^2([-1,1])} &=\Big\langle g_{m,\nu}, t^j\Big\rangle_{L^2([-1,1])} \quad \text{for $j\in \{2m,2m+2\}$}.
\end{align*}
Thus by Proposition \ref{prop-det-proj}, we obtain the desired equality
\[
P_{\mathcal{A}_{+}}(g_{m,\nu})= A_{m,\nu} t^{2m} + B_{m,\nu} t^{2m+2}.
\]
\end{proof}

\begin{proof}[Proof of Proposition \ref{prop-f-alpha}]
Let $\alpha\in[0,2), m\in \mathbb{N}$ and $(c_m,d_m)$ be defined as \eqref{c-d-m}.  By Proposition \ref{prop-det-proj}, for proving the equality \eqref{eq-sgn-proj}, it suffices to verify
\begin{equation}\label{E:ineq-sgn}
\Big\langle c_mt^{2m+1}+d_mt^{2m+3}, t^j \Big\rangle_{L^2([-1,1])} \geq \Big\langle  \sgn(t) |t|^{2m+1+\alpha}, t^j\Big\rangle_{L^2([-1,1])}\quad \text{for all $j\in\mathbb{N}$}
\end{equation}
and
\begin{equation}\label{E:eq-sgn}
\Big\langle c_mt^{2m+1}+d_mt^{2m+3}, t^j\Big\rangle_{L^2([-1,1])} =\Big\langle \sgn(t) |t|^{2m+1+\alpha}, t^j\Big\rangle_{L^2([-1,1])} \, \text{for $j\in \{2m+1,2m+3\}$}.
\end{equation}
If $j$ is an even number, then  \eqref{E:ineq-sgn} holds since both sides of \eqref{E:ineq-sgn} vanish; the same is true for \eqref{E:eq-sgn}. So we now focus on odd numbers $j  = 2k+1$ with $k\ge 0$.
Note that $(c_m,d_m)$ defined by \eqref{c-d-m} is in fact the solution of the linear equation
\begin{align}\label{linear-eq-sgn}
\left(
  \begin{array}{cc}
    \frac{2}{4m+3} & \frac{2}{4m+5}  \vspace{2mm}\\
    \frac{2}{4m+5} & \frac{2}{4m+7}\\
  \end{array}
\right)
\left(
  \begin{array}{c}
    c_m \vspace{2mm}
\\
    d_m
  \end{array}
\right)
=\left(
   \begin{array}{c}
     \frac{2}{4m+\alpha+3} \vspace{2mm}
 \\
     \frac{2}{4m+\alpha+5}
   \end{array}
 \right).
\end{align}
Since for any odd number $j= 2k+1$,
\[
\Big\langle \sgn(t) |t|^\gamma,t^{2k+1}\Big\rangle_{L^2([-1,1])}=\frac{2}{\gamma+2k+2}\quad \text{for all $\gamma \geq 0$}
\]
and
\[
\Big\langle t^{2\ell+1},t^{2k+1}\Big\rangle_{L^2([-1,1])}=\frac{2}{2 \ell+2k+3}\quad \text{for all $k, \ell \in \N$},
\]
the equality  \eqref{linear-eq-sgn} is equivalent to the equality \eqref{E:eq-sgn}. It remains to show the inequalities \eqref{E:ineq-sgn} for all odd numbers $j=2k+1$ with $k\ge 0$. That is, we need to show
\[
\frac{2 c_m}{2m+2k+3}+\frac{2 d_m}{2m+2k+5}\geq \frac{2}{2m+2k+\alpha+3}\quad \text{for all $ k\in\mathbb{N}$}.
\]
Set
\begin{align*}
T_k:=(2m+2k+3)(2m+2k+5)(4m+3+\alpha)(4m+5+\alpha)(2m+2k+3+\alpha)\\
\times \bigg(
\frac{2c_m}{2m+2k+3}+\frac{2d_m}{2m+2k+5}-\frac{2}{2m+2k+3+\alpha}
\bigg).
\end{align*}
Then we only need to  show
\begin{align}\label{T-k-pos}
T_k\geq 0 \quad \text{for all $k\in\mathbb{N}$}.
\end{align}
By using exactly the same arguments as those in dealing with $D_k$ in the proof of Proposition \ref{prop-h-alpha} (another simpler way  is to replace everywhere the pair $(m, k)$ in the definition of $D_k$ by the pair $(m+\frac{1}{2}, k+\frac{1}{2})$ to obtain a reduced form of $T_k$), we obtain
\begin{align*}
T_k=2\alpha(2-\alpha)[(2k-2m-1)^2-1] \quad \text{for all $k\in \N$}.
\end{align*}
By observing $(2k-2m-1)^2-1\geq 0$ for any $k,m\in\mathbb{N}$ and using the assumption $\alpha \in [0, 2)$, we obtain the desired inequalities \eqref{T-k-pos}.  This completes the proof of the equality \eqref{eq-sgn-proj}.

The verification of the equality \eqref{best-dist-sgn} is similar to that of the equality \eqref{best-dist}.
\end{proof}

\subsection{Indicator functions}
For any $n\in \N$, define an $(n+1)\times (n+1)$-matrix by
\[
M_n  := \Big(\langle t^i, t^j\rangle_{L^2([-1,1])}\Big)_{0\le i, j \le n}.
\]
By the linear independence of the functions $1, t, t^2, \cdots$ on $[-1,1]$, for  any $n\in \N$, the matrix $M_n$ is non-singular.

Note that for any $n\in \N$ and  any $a\in [-1,1)$, we have
\[
\langle \psi_a, t^n\rangle_{L^2([-1,1])} = \int_a^1 t^n dt = \frac{1 - a^{n+1}}{n+1}.
\]
Let $v_{a}^{(n)}\in \R^{n+1}$ be the column vector defined by
\begin{align*}
v_{a}^{(n)} & = (\langle \psi_a, 1\rangle_{L^2([-1,1])}, \, \langle \psi_a, t \rangle_{L^2([-1,1])}, \, \cdots,\, \langle \psi_a, t^n\rangle_{L^2([-1,1])} )^\top
\\
& = \Big(1 - a, \frac{1-a^2}{2}, \cdots, \frac{1- a^{n+1}}{n+1} \Big)^\top.
\end{align*}

Denote by $\R_{+}^* = (0, \infty)$ the set of all positive numbers.  Lemmas \ref{lem-33} and \ref{lem-inf-ineq} below will be used in the proof of Propositions \ref{prop-indicator} and \ref{prop-neg-a}.

\begin{lemma}\label{lem-33}
Assume that $a\in [-1, 1)$. Then the linear equation
\begin{align}\label{eq-33}
M_2 x = v_{a}^{(2)}
\end{align}
has a solution in $(\R_{+}^*)^3$ if and only if
\[
0< a < \frac{\sqrt{105}-5}{10}.
\]
\end{lemma}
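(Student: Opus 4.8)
The plan is to solve the $3\times 3$ linear system \eqref{eq-33} explicitly and to read off from the solution the conditions under which all three coordinates are strictly positive.

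First I would record the entries involved. Since $\langle t^i,t^j\rangle_{L^2([-1,1])}=\int_{-1}^1 t^{i+j}\,dt$ vanishes whenever $i+j$ is odd, the matrix and the right-hand side are
\[
M_2=\begin{pmatrix} 2 & 0 & 2/3\\ 0 & 2/3 & 0\\ 2/3 & 0 & 2/5\end{pmatrix},\qquad v_a^{(2)}=\Big(1-a,\ \tfrac{1-a^2}{2},\ \tfrac{1-a^3}{3}\Big)^{\top}.
\]
In particular the middle (``odd'') coordinate decouples from the first and third (``even'') coordinates. The second equation immediately gives $x_1=\tfrac34(1-a^2)$, and eliminating $x_0$ between the first and third equations gives
\[
x_2=\frac{15\,a(1-a^2)}{8},\qquad x_0=\frac{1-a}{2}-\frac13\,x_2=\frac{(1-a)\big(4-5a-5a^2\big)}{8}=\frac{4-9a+5a^3}{8}.
\]
One can check that these agree with the coefficients appearing in \eqref{P-C-psi-a}, which is a convenient consistency check.

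It then remains to analyze the signs. Since $a\in[-1,1)$ we have $1-a>0$ and $1-a^2\ge 0$, with $1-a^2>0$ exactly for $a\in(-1,1)$. Hence: (i) $x_1>0$ if and only if $a\in(-1,1)$, so $x_1$ never obstructs membership in $(\R_{+}^*)^3$ inside the candidate interval; (ii) $x_2>0$ if and only if $a(1-a^2)>0$, i.e.\ if and only if $a>0$, which produces the lower endpoint $0$; (iii) $x_0>0$ if and only if $4-5a-5a^2>0$, i.e.\ if and only if $a$ lies strictly between the two roots $\tfrac{-5\pm\sqrt{105}}{10}$ of $5a^2+5a-4$, and since $\tfrac{-5-\sqrt{105}}{10}<-1$ the only effective restriction on $[-1,1)$ is $a<\tfrac{\sqrt{105}-5}{10}$, which produces the upper endpoint. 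Intersecting (i), (ii) and (iii) shows that \eqref{eq-33} has a solution in $(\R_{+}^*)^3$ precisely when $0<a<\tfrac{\sqrt{105}-5}{10}$, which is the claim.

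There is no genuine obstacle here: the argument is a direct elimination followed by elementary sign analysis of one linear factor and one quadratic factor. The only point requiring a little care is the bookkeeping — tracking that the lower endpoint comes from positivity of $x_2$, the upper endpoint from positivity of $x_0$, and that positivity of $x_1$ is automatic on $(-1,1)$ — together with the observation that the negative root $\tfrac{-5-\sqrt{105}}{10}$ lies outside $[-1,1)$ and is therefore irrelevant.
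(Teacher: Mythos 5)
Your proposal is correct and follows essentially the same route as the paper: solve the $3\times 3$ system explicitly (your formulas for $x_0,x_1,x_2$ match \eqref{sol-x} exactly) and then determine positivity by elementary sign analysis, with the lower endpoint coming from $x_2>0$ and the upper endpoint from the quadratic factor $4-5a-5a^2$ in $x_0$. The paper compresses the sign analysis into ``a simple computation,'' so your write-up is simply a more detailed version of the same argument.
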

\begin{proof}
The solution $x= (x_0, x_1, x_2)$ of the linear equation \eqref{eq-33} is given by
\begin{align}\label{sol-x}
\left\{
\begin{array}{l}
x_0= \frac{1}{8} (4-9a+5a^3) = \frac{1}{8} (1 -a)(4-5a-5a^2)
\vspace{2mm}
\\
x_1= \frac{3}{4}(1 - a^2)
\vspace{2mm}
\\
x_2  =\frac{15}{8} (a-a^3) = \frac{15}{8}a (1-a^2)
\end{array}
\right..
\end{align}
A simple computation shows that,  under the assumption $a\in [-1, 1)$, the solution $x$ belongs to $(\R_+^*)^3$ if and only if $0< a < \frac{\sqrt{105}-5}{10}$.
This completes the proof of the lemma.
\end{proof}

\begin{lemma}\label{lem-inf-ineq}
Assume that $\rho \in [0,1)$. Then the condition
\begin{align}\label{rho-inf-n}
 (1 -3\rho+2\rho^{n+1})n  + 3 \rho^{n+1} \ge  3 \rho \quad  \text{for all integers $n \ge 1$}
\end{align}
holds if and only if   $\rho\in [0, \frac{1}{5}]$.
\end{lemma}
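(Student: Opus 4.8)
The plan is to analyze the single-variable function
\[
\phi_\rho(n) := (1-3\rho+2\rho^{n+1})n + 3\rho^{n+1} - 3\rho
\]
for $\rho\in[0,1)$ and integer $n\ge 1$, and determine exactly when $\phi_\rho(n)\ge 0$ for all $n\ge 1$. First I would check the boundary case $n=1$: there $\phi_\rho(1) = 1-3\rho+2\rho^2 + 3\rho^2 - 3\rho = 1 - 6\rho + 5\rho^2 = (1-\rho)(1-5\rho)$, which is $\ge 0$ (given $\rho\in[0,1)$) precisely when $\rho\le \tfrac15$. This already shows the condition \eqref{rho-inf-n} \emph{forces} $\rho\in[0,\tfrac15]$, giving one direction of the "if and only if" immediately.

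For the converse, assume $\rho\in[0,\tfrac15]$ and show $\phi_\rho(n)\ge 0$ for every integer $n\ge 2$ as well. I would split into two regimes according to the sign of the coefficient $1-3\rho+2\rho^{n+1}$ of $n$. When $\rho\le\tfrac13$, note $1-3\rho+2\rho^{n+1}\ge 1-3\rho\ge 0$, so every summand of $\phi_\rho(n)$ is nonnegative (using $3\rho^{n+1}-3\rho$ may be negative, so more care is needed); more precisely I would write $\phi_\rho(n) = (1-3\rho)n + 3\rho(\rho^{n}-1) + 2n\rho^{n+1}$ and bound $3\rho(1-\rho^n)\le 3\rho \le \tfrac35 < 1 \le (1-3\rho)n$ when $\rho\le\tfrac15$ and $n\ge 1$, since $1-3\rho\ge 1-\tfrac35 = \tfrac25$ gives $(1-3\rho)n\ge \tfrac25 n$, and for $n\ge 2$ this is $\ge \tfrac45 > \tfrac35\ge 3\rho$. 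The remaining term $2n\rho^{n+1}\ge 0$ only helps. Thus $\phi_\rho(n)>0$ for $n\ge 2$, and we already handled $n=1$.

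Actually the cleanest route is uniform: for $\rho\in[0,\tfrac15]$ and $n\ge 1$, write
\[
\phi_\rho(n) = (1-3\rho)n - 3\rho(1-\rho^{n}) + 2n\rho^{n+1} \ge (1-3\rho)n - 3\rho + 2n\rho^{n+1},
\]
and since $1-3\rho\ge\tfrac25$ and $3\rho\le\tfrac35$, we get $\phi_\rho(n)\ge \tfrac25 n - \tfrac35 + 2n\rho^{n+1}$, which is $\ge -\tfrac15 + 2\rho^2 > 0$ at $n=1$... wait, that lower bound $\tfrac25-\tfrac35 = -\tfrac15$ is negative, so the crude bound loses the $n=1$ case and one must use the exact factorization $(1-\rho)(1-5\rho)$ there. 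So the final writeup keeps the two cases $n=1$ (exact) and $n\ge 2$ (crude bound $\tfrac25 n - \tfrac35\ge\tfrac15>0$). I expect the main obstacle to be nothing deep — it is a routine estimate — but one must be careful that the crude linear-in-$n$ bound degenerates exactly at $n=1$, which is precisely the extremal case determining the threshold $\rho=\tfrac15$; handling $n=1$ by the exact factorization rather than an inequality is the key technical point.
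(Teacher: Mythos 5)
Your proposal is correct and follows essentially the same route as the paper: the necessity of $\rho\le\tfrac15$ comes from the exact factorization $(1-\rho)(1-5\rho)$ at $n=1$, and sufficiency for $n\ge 2$ is the same crude bound $\phi_\rho(n)\ge\bigl(1-\tfrac35\bigr)n-\tfrac35=\tfrac{2n-3}{5}\ge\tfrac15>0$. No gaps; the final writeup should just drop the exploratory detours and keep the two cases $n=1$ (exact) and $n\ge 2$ (linear lower bound).
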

\begin{proof}
Assume that \eqref{rho-inf-n} holds. Then by taking $n=1$, we have
\[
1 - 3 \rho + 2 \rho^2 + 3 \rho^2 \ge 3 \rho.
\]
This combined with the assumption $\rho \in [0,1)$ implies that $\rho \in [0,  1/5]$.

Conversely, assume that $\rho \in [0, 1/5]$. Then for $n=1$, we have
\[
1 -3\rho+2\rho^{2} + 3 \rho^{2} -  3 \rho  = (1-\rho)(1 - 5\rho)\ge 0.
\]
This implies the inequality \eqref{rho-inf-n} for $n=1$.
Now  assume that $n\ge 2$, we have
\begin{align*}
(1 -3\rho+2\rho^{n+1})n  + 3 \rho^{n+1} -  3 \rho \ge (1 - \frac{3}{5} ) n -  \frac{3}{5} = \frac{2n-3}{5} \ge \frac{1}{5} \ge 0.
\end{align*}
This implies the inequality \eqref{rho-inf-n} for all integers $n\ge 2$.
\end{proof}

\begin{proof}[Proof of Proposition \ref{prop-indicator}]
Let  $x_0, x_1, x_2$ be given as in \eqref{sol-x} and recall that $x$ is the solution to the linear equation \eqref{eq-33}. By the discussions in \S\ref{sec-scheme} and Proposition~\ref{prop-det-proj},  the equality  \eqref{gamma-set-3} holds if and only if the following conditions are all satisfied:
\begin{itemize}
\item $x\in (\R_{+}^*)^3$;
\item for  all $j\in \{0,1,2\}$,
\begin{align}\label{j-0-2}
\Big\langle \sum_{i=0}^2 x_i t^i, t^j\Big\rangle_{L^2([-1,1])} =\langle \psi_a, t^j\rangle_{L^2([-1,1])} =  \frac{1 - a^{j+1}}{j+1};
\end{align}
\item for all integers $j\ge 3$,
\begin{align}\label{j-ge-3}
\Big\langle \sum_{i=0}^2 x_i t^i, t^j\Big\rangle_{L^2([-1,1])} \ge \langle \psi_a, t^j\rangle_{L^2([-1,1])} =   \frac{1 - a^{j+1}}{j+1}.
\end{align}
\end{itemize}
Note that the system of linear equations \eqref{j-0-2} for $j\in \{0, 1, 2\}$ is equivalent to the linear equation \eqref{eq-33}. Thus by the definition of $x$, the equalities \eqref{j-0-2} hold  for  all $j\in \{0, 1, 2\}$. By Lemma \ref{lem-33}, $x\in (\R_{+}^*)^3$ if and only if $0< a < \frac{\sqrt{105}-5}{10}$.  Now let us analyze \eqref{j-ge-3}. If $j= 2n+1$ with $n \ge 1$, then
\begin{align*}
& \Big\langle \sum_{i=0}^2 x_i t^i, t^{2n+1}\Big\rangle_{L^2([-1,1])} - \langle \psi_a, t^{2n+1}\rangle_{L^2([-1,1])}
= \frac{2}{2n+3} \cdot \frac{3}{4} (1 - a^2) - \frac{1 -a^{2n+2}}{2n+2}
\\
& = \frac{(1 -3a^2+2a^{2n+2})n  + 3 a^{2n+2} - 3a^2}{2(2n+3)(n+1)}
\end{align*}
and if $j = 2n+2$ with $n \ge 1$, then
\begin{align*}
& \Big\langle \sum_{i=0}^2 x_i t^i, t^{2n+2}\Big\rangle_{L^2([-1,1])} - \langle \psi_a, t^{2n+2}\rangle_{L^2([-1,1])}
\\
=&  \frac{2}{2n+3}  \cdot \frac{1}{8}(4-9a + 5a^3)    + \frac{2}{2n+5} \cdot \frac{15}{8}(a-a^3)- \frac{1 -a^{2n+3}}{2n+3}
\\
= & \frac{a \Big[  (3-5a^2+2a^{2n+2}) n - 5 a^2 +5 a^{2n+2}\Big]}{(2n+3)(2n+5)}.
\end{align*}
Therefore, the equality  \eqref{gamma-set-3} holds if and only if the following conditions are all satisfied:
\begin{itemize}
\item $0 < a < \frac{\sqrt{105}-5}{10}$;
\item for all integers $n\ge 1$,
\[
(1 -3a^2+2a^{2n+2})n  + 3 a^{2n+2} - 3a^2  \ge 0;
\]
\item for all integers $n\ge 1$,
\[
 a \Big[  (3-5a^2+2a^{2n+2}) n - 5 a^2 +5 a^{2n+2}\Big] \ge 0.
\]
\end{itemize}
Observe that
\begin{align*}
(3-5a^2+2a^{2n+2})n  +5 a^{2n+2} - 5 a^2 =&  3\Big[ (1 -3a^2+2a^{2n+2})n  + 3 a^{2n+2} -  3a^2\Big]
\\
& + 4 a^2 ( n +1)(1 -  a^{2n})
\\
 \ge &  3\Big[ (1 -3a^2+2a^{2n+2})n  + 3 a^{2n+2} -  3a^2\Big].
\end{align*}
Thus by Lemma \ref{lem-inf-ineq} and  the inequality
$\frac{1}{\sqrt{5}} < \frac{\sqrt{105}-5}{10}$,
 the equality  \eqref{gamma-set-3} holds if and only if  $0< a \le \frac{1}{\sqrt{5}}$.

Finally, by Proposition \ref{prop-det-proj}, the above arguments imply that for $0< a \le \frac{1}{\sqrt{5}}$, we have
\[
P_{\mathcal{A}_{+}} (\psi_a) = \sum_{i=0}^2 x_i t^i.
\]
This is the desired equality \eqref{P-C-psi-a}.
\end{proof}

\begin{proof}[Proof of Corollary \ref{cor-monotone}]
The proof of Corollary \ref{cor-monotone} is similar to that of Corollary \ref{cor-int}.
\end{proof}

\begin{proof}[Proof of Proposition \ref{prop-neg-a}]
Set
\[
y_0 := \frac{1-a}{2}, \quad y_1 : = \frac{3}{4} (1 -a^2).
\]
Then the assumption $a\in [-1, 1)$ implies that $y= (y_0, y_1)\in (\R_{+}^*)^2$.  Therefore, by  Proposition~\ref{prop-det-proj},   the equality \eqref{P-neg-a} holds if and only if
\begin{align}\label{j-0-1}
\Big\langle \sum_{i=0}^1 y_i t^i, t^j\Big\rangle_{L^2([-1,1])} =\langle \psi_a, t^j\rangle_{L^2([-1,1])} =   \frac{1 - a^{j+1}}{j+1} \quad \text{for $j\in \{0,1\}$}
\end{align}
and
\begin{align}\label{j-ge-2}
\Big\langle \sum_{i=0}^1 y_i t^i, t^j\Big\rangle_{L^2([-1,1])} \ge \langle \psi_a, t^j\rangle_{L^2([-1,1])} =   \frac{1 - a^{j+1}}{j+1} \quad \text{for $j\ge 2$}.
\end{align}
The equalities \eqref{j-0-1} can be checked directly by using the definitions of $y_0$ and $y_1$.  Now let us analyze the inequalities \eqref{j-ge-2}. Note that if $j = 2n$ for $n\ge 1$, then
\begin{align*}
\Big\langle \sum_{i=0}^1 y_i t^{i}, t^{2n}\Big\rangle_{L^2([-1,1])}  -  \langle \psi_a, t^{2n}\rangle_{L^2([-1,1])} &=   \frac{2}{2n+1} \frac{1-a}{2} - \frac{1-a^{2n+1}}{2n+1}= \frac{a^{2n+1}-a}{2n+1}
\end{align*}
and if $j = 2n+1$ with $n\ge 1$, then
\begin{align*}
\Big\langle \sum_{i=0}^1 y_i t^{i}, t^{2n+1}\Big\rangle_{L^2([-1,1])}  -  \langle \psi_a, t^{2n+1}\rangle_{L^2([-1,1])} &=   \frac{2}{2n+3}  \cdot \frac{3}{4}(1-a^2) - \frac{1-a^{2n+2}}{2n+2}
\\
& = \frac{(1 -3a^2+2a^{2n+2})n  + 3 a^{2n+2} - 3a^2}{2(2n+3)(n+1)}.
\end{align*}
Therefore,    \eqref{j-ge-2} holds if and only if
\begin{align}\label{simple-comp}
\left\{
\begin{array}{l}
a^{2n+1} -a \ge 0
\vspace{2mm}
\\
(1 -3a^2+2a^{2n+2})n  + 3 a^{2n+2} \ge  3a^2
\end{array}
\right. \quad \text{for all integers $n\ge 1$}.
\end{align}
By Lemma \ref{lem-inf-ineq},  under the assumption $a\in [-1,1)$, the condition \eqref{simple-comp} holds  if and only if $-\frac{1}{\sqrt{5}}\le a \le 0$. 
This completes the proof of the proposition.
\end{proof}

Lemmas \ref{lem-44} and \ref{lem-two-ineq} below will be used in the proof of Proposition \ref{prop-big-a}.

\begin{lemma}\label{lem-44}
Assume that $a\in [-1, 1)$. Then the solution to the linear equation
\begin{align}\label{eq-44}
M_3 z = v_{a}^{(3)}
\end{align}
belongs to $(\R_{+}^*)^4$ if and only if
\[
\frac{1}{\sqrt{5}}< a < \frac{\sqrt{105}-5}{10}.
\]
\end{lemma}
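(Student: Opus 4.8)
The plan is to solve the linear equation \eqref{eq-44} in closed form and then read off, factor by factor, for which $a\in[-1,1)$ all four components of the solution are strictly positive. The structural point that makes the computation short is that the Gram matrix $M_3=(\langle t^i,t^j\rangle_{L^2([-1,1])})_{0\le i,j\le 3}$ has a zero entry whenever $i+j$ is odd, so after separating even-degree from odd-degree coefficients it decouples into two $2\times 2$ systems: the even block $\left(\begin{smallmatrix}2 & 2/3\\ 2/3 & 2/5\end{smallmatrix}\right)\!\left(\begin{smallmatrix}z_0\\ z_2\end{smallmatrix}\right)=\left(\begin{smallmatrix}1-a\\ (1-a^3)/3\end{smallmatrix}\right)$ and the odd block $\left(\begin{smallmatrix}2/3 & 2/5\\ 2/5 & 2/7\end{smallmatrix}\right)\!\left(\begin{smallmatrix}z_1\\ z_3\end{smallmatrix}\right)=\left(\begin{smallmatrix}(1-a^2)/2\\ (1-a^4)/4\end{smallmatrix}\right)$, where $z=(z_0,z_1,z_2,z_3)$ denotes the solution. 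Solving each block by Cramer's rule and simplifying with the identities $1-a^2=(1-a)(1+a)$, $1-a^3=(1-a)(1+a+a^2)$, $1-a^4=(1-a^2)(1+a^2)$ yields exactly the coefficients that appear in \eqref{P-big-a}:
\[
z_0=\tfrac18(1-a)(4-5a-5a^2),\quad z_1=\tfrac{15}{32}(1-a^2)(3-7a^2),\quad z_2=\tfrac{15}{8}a(1-a^2),\quad z_3=\tfrac{35}{32}(1-a^2)(5a^2-1).
\]
(Alternatively, one may simply verify that this quadruple solves \eqref{eq-44} and invoke the non-singularity of $M_3$, avoiding the derivation altogether.)

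It then remains to decide for which $a\in[-1,1)$ one has $z_0,z_1,z_2,z_3>0$. On $[-1,1)$ the factor $1-a$ is positive, and $1-a^2$ is positive on $(-1,1)$ but vanishes at $a=-1$; hence positivity of $z_1,z_2,z_3$ already forces $a\in(-1,1)$, and on that interval the sign of each $z_i$ is dictated by its remaining polynomial factor. The inequality $z_0>0$ is equivalent to $4-5a-5a^2>0$; since the smaller root $\tfrac{-5-\sqrt{105}}{10}$ of $5a^2+5a-4$ lies below $-1$, this reduces on $(-1,1)$ to $a<\tfrac{\sqrt{105}-5}{10}$. The inequality $z_2>0$ is $a>0$, and $z_3>0$ is $5a^2-1>0$, i.e.\ $|a|>\tfrac1{\sqrt5}$; together these give $a>\tfrac1{\sqrt5}$. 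Finally $z_1>0$ is $3-7a^2>0$, i.e.\ $|a|<\sqrt{3/7}$, and since $\tfrac{\sqrt{105}-5}{10}<\sqrt{3/7}$ this is automatically implied by $a<\tfrac{\sqrt{105}-5}{10}$. Intersecting all constraints gives precisely $\tfrac1{\sqrt5}<a<\tfrac{\sqrt{105}-5}{10}$; at $a=\tfrac1{\sqrt5}$ one has $z_3=0$ and at $a=\tfrac{\sqrt{105}-5}{10}$ one has $z_0=0$, so the interval is open, which is the assertion of the lemma.

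The whole argument is a bookkeeping computation; the only point that needs care is tracking which polynomial factor controls which one-sided inequality and checking the ordering of the three thresholds $\tfrac1{\sqrt5}$, $\tfrac{\sqrt{105}-5}{10}$ and $\sqrt{3/7}$, so that the four separate positivity conditions collapse to the single interval $(\tfrac1{\sqrt5},\tfrac{\sqrt{105}-5}{10})$ rather than to a strictly smaller set. No deeper obstacle is expected.
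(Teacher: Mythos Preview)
Your proposal is correct and follows the same approach as the paper: both obtain the explicit solution $z=(z_0,z_1,z_2,z_3)$ in the factored form \eqref{sol-z} and then determine for which $a\in[-1,1)$ all four components are strictly positive. The paper's proof is terser---it simply records the solution and asserts the equivalence---whereas you supply the helpful observation that the parity structure of $M_3$ decouples the system into two $2\times 2$ blocks, and you spell out the factor-by-factor positivity analysis (including the verification that $\tfrac{\sqrt{105}-5}{10}<\sqrt{3/7}$, which ensures the $z_1$ constraint is redundant); these details are implicit in the paper but not written out.
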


\begin{proof}
The solution $z= (z_0, z_1, z_2, z_3)$ of the linear equation \eqref{eq-44} is given by
\begin{align}\label{sol-z}
\left\{
\begin{array}{l}
z_0 = \frac{1}{8} (1 -a)(4-5a-5a^2)
\vspace{2mm}
\\
z_1= \frac{15}{32} (1 - a^2) (3-7a^2)
\vspace{2mm}
\\
z_2  = \frac{15}{8}a (1-a^2)
\vspace{2mm}
\\
z_3 = \frac{35}{32}(1 -a^2)(5a^2 -1)
\end{array}
\right..
\end{align}
Therefore,   under the assumption $a\in [-1, 1)$, the solution $z$ belongs to $(\R_+^*)^4$ if and only if
$
\frac{1}{\sqrt{5}}< a < \frac{\sqrt{105}-5}{10}.
$
This completes the proof of the lemma.
\end{proof}

\begin{lemma}\label{lem-two-ineq}
Suppose that $\frac{1}{\sqrt{5}}< a < \frac{\sqrt{105}-5}{10}$. Then for all integers $n\ge 2$, we have
\[
(3a -5a^3+2a^{2n+1})n + 3 a^{2n+1}-3a\ge 0
\]
and
\[
(-3 + 30a^2 -35 a^4 + 8 a^{2n+2})n^2 + (3 -35 a^4 +32 a^{2n+2})n +30 a^{2n+2} -30 a^2 \ge 0.
\]
\end{lemma}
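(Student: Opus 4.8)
The plan is to collapse both inequalities into statements about the sign of a handful of explicit quadratics in the single variable $\rho:=a^{2}$, using the trivial principle that a quadratic keeping one sign at both endpoints of an interval keeps it throughout. The first step is to pass to $\rho=a^{2}$: since $a>0$, the hypothesis $\frac{1}{\sqrt5}<a<\frac{\sqrt{105}-5}{10}$ is equivalent to
\[
\tfrac15\;<\;\rho\;<\;\Big(\tfrac{\sqrt{105}-5}{10}\Big)^{2}=\tfrac{13-\sqrt{105}}{10}\;<\;\tfrac{3}{10},
\]
the last inequality holding because $\sqrt{105}>10$; so the whole argument runs under the single constraint $\rho\in(\tfrac15,\tfrac3{10})$, and we also use $a^{2n+1}=a\rho^{n}$, $a^{2n+2}=\rho^{n+1}$. (For $n=1$ both expressions in the lemma vanish identically, so $n=2$ is the genuinely critical case and the cut-off $n\ge2$ is forced.)

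For the first inequality I would factor out the positive number $a$, reducing the claim to $(3-5\rho+2\rho^{n})n+3\rho^{n}-3\ge0$ for $n\ge2$. Since $\rho^{n}>0$ and $3-5\rho>3-\tfrac32=\tfrac32$, the left-hand side is at least $(3-5\rho)n-3>\tfrac32 n-3=\tfrac32(n-2)\ge0$, which disposes of it uniformly in $n\ge2$.

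For the second inequality, substituting $\rho=a^{2}$ puts it in the form
\[
P(\rho)\,n^{2}+Q(\rho)\,n-30\rho+\rho^{n+1}\big(8n^{2}+32n+30\big)\ge0,\qquad P(\rho):=-35\rho^{2}+30\rho-3,\quad Q(\rho):=3-35\rho^{2}.
\]
Since $\rho^{n+1}\big(8n^{2}+32n+30\big)>0$, it suffices to prove $\psi_{\rho}(n):=P(\rho)n^{2}+Q(\rho)n-30\rho\ge0$ for $n\ge2$, and I would do this by monotonicity in $n$: as $\psi_{\rho}'(n)=2P(\rho)n+Q(\rho)$, once $P(\rho)\ge0$ and $\psi_{\rho}'(2)=4P(\rho)+Q(\rho)\ge0$ are known, $\psi_{\rho}'$ is nonnegative on $[2,\infty)$, hence $\psi_{\rho}(n)\ge\psi_{\rho}(2)$ there, and it remains only to see $\psi_{\rho}(2)>0$. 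The three scalar inequalities left over,
\[
P(\rho)>0,\qquad 4P(\rho)+Q(\rho)=-175\rho^{2}+120\rho-9\ge0,\qquad \psi_{\rho}(2)=4P(\rho)+2Q(\rho)-30\rho=-6\big(35\rho^{2}-15\rho+1\big)>0,
\]
are each verified by endpoint evaluation on $(\tfrac15,\tfrac3{10})$: $P$ is concave with $P(\tfrac15)=\tfrac85>0$ and $P(\tfrac3{10})=\tfrac{57}{20}>0$; the quadratic $175\rho^{2}-120\rho+9$ has roots $\tfrac3{35}$ and $\tfrac35$, and $(\tfrac15,\tfrac3{10})$ lies between them; and $35\rho^{2}-15\rho+1$ is convex with $35(\tfrac15)^{2}-15\cdot\tfrac15+1=-\tfrac35<0$ and $35(\tfrac3{10})^{2}-15\cdot\tfrac3{10}+1=-\tfrac7{20}<0$.

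I do not expect a deep obstacle: the difficulty is purely bookkeeping. The one delicate point is the case $n=2$ of the second inequality, where after discarding the positive tail $\rho^{n+1}(8n^{2}+32n+30)$ the remaining margin is exactly $\psi_{\rho}(2)=-6(35\rho^{2}-15\rho+1)$, so one must check it stays positive throughout the hypothesis range — which works because the larger root $\tfrac{15+\sqrt{85}}{70}\approx0.35$ of $35\rho^{2}-15\rho+1$ still exceeds $\tfrac3{10}$ (and the smaller root lies below $\tfrac15$). This tightness at the top end is exactly to be expected, since $(\tfrac1{\sqrt5},\tfrac{\sqrt{105}-5}{10})$ is precisely the interval of $a$ for which the vector $z$ of Lemma~\ref{lem-44} has all coordinates strictly positive.
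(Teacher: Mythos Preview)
Your proof is correct and follows essentially the same route as the paper's: drop the positive $a^{2n+1}$ (resp.\ $a^{2n+2}$) tail terms, observe that the remaining expression is increasing in $n$ once the leading coefficients are shown positive on the given range, and reduce everything to the case $n=2$, which in both arguments comes down to $35\rho^{2}-15\rho+1<0$ for $\rho=a^{2}\in(\tfrac15,\tfrac3{10})$. The only cosmetic differences are your systematic substitution $\rho=a^{2}$ and your use of $\psi_\rho'(2)=4P+Q\ge 0$ in place of the paper's separate check that $Q(\rho)=3-35\rho^{2}>0$; the two conditions serve the same purpose and lead to the identical bottom line.
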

\begin{proof}
Assume that $\frac{1}{\sqrt{5}}< a < \frac{\sqrt{105}-5}{10}$. For the first inequality, note that, combined with the elementary inequality
\[
\Big( \frac{\sqrt{105}-5}{10}\Big)^2 < \frac{3}{10},
\]
the assumption $\frac{1}{\sqrt{5}}< a < \frac{\sqrt{105}-5}{10}$ implies
\[
3a - 5a^3 = a(3-5a^2) > 0 \an 3a -10 a^3 = a(3-10a^2)> 0.
\]
Thus for any $n\ge 2$, we have
\begin{align*}
(3a -5a^3+2a^{2n+1})n + 3 a^{2n+1}-3a &  \ge   (3a - 5a^3) n- 3a
\\
&  \ge 2(3a -5a^3) -3a  = 3a -10a^3>0.
\end{align*}

For the second inequality, note that, combined with the elementary inequality
\[
\Big( \frac{\sqrt{105}-5}{10}\Big)^4 < \frac{3}{35},
\]
the assumption $\frac{1}{\sqrt{5}}< a < \frac{\sqrt{105}-5}{10}$ implies
\[
3 - 35 a^4 > 0
\]
and
\[
-3 + 30 a^2 - 35 a^4 > -3  + 30 \Big(\frac{1}{\sqrt{5}}\Big)^2 -35 a^4 = 3 -35 a^4 > 0.
\]
Note also that
\[
-1 + 15 x^2 -35 x^4 \ge 0 \quad \text{provided that $\frac{15 - \sqrt{85}}{70}\le x^2\le \frac{15 + \sqrt{85}}{70}$}.
\]
Therefore, by noting
\[
 \frac{15 - \sqrt{85}}{70} \le \frac{1}{5} \le  a^2 \le \Big( \frac{\sqrt{105}-5}{10}\Big)^2 \le \frac{15 + \sqrt{85}}{70},
\]
we have
\[
-1 +15 a^2 -35 a^4\ge 0.
\]
It follows that for any integer $n\ge 2$, we have
\begin{align*}
& (-3 + 30a^2 -35 a^4 + 8 a^{2n+2})n^2 + (3 -35 a^4 +32 a^{2n+2})n +30 a^{2n+2} -30 a^2
\\
& \ge (-3 + 30a^2 -35 a^4)n^2 + (3 -35 a^4)n -30 a^2
\\
& \ge (-3 + 30a^2 -35 a^4)\times 4+ (3 -35 a^4)\times 2 -30 a^2
\\
& = 6 (-1 +15 a^2 -35 a^4) \ge 0.
\end{align*}

The lemma is  proved completely.
\end{proof}

\begin{proof}[Proof of Proposition \ref{prop-big-a}]
Let  $z = (z_0, z_1, z_2, z_3)$ be given as in \eqref{sol-z} and recall that $z$ is the solution to the linear equation \eqref{eq-44}. By the discussions in \S\ref{sec-scheme} and Proposition~\ref{prop-det-proj},  the equality  \eqref{gamma-set-4} holds if and only if the following conditions are all satisfied:
\begin{itemize}
\item $z \in (\R_{+}^*)^4$;
\item for $j\in \{0, 1, 2, 3\}$, we have
\begin{align}\label{j-0-3}
\Big\langle \sum_{i=0}^3 z_i t^i, t^j\Big\rangle_{L^2([-1,1])} =\langle \psi_a, t^j\rangle_{L^2([-1,1])} =   \frac{1 - a^{j+1}}{j+1};
\end{align}
\item for all integers $j \ge 4$, we have
\begin{align}\label{j-ge-4}
\Big\langle \sum_{i=0}^3 z_i t^i, t^j\Big\rangle_{L^2([-1,1])} \ge \langle \psi_a, t^j\rangle_{L^2([-1,1])} =   \frac{1 - a^{j+1}}{j+1}.
\end{align}
\end{itemize}
Note that the system of linear equations \eqref{j-0-3} for $j\in \{0, 1, 2, 3\}$ is equivalent to the linear equation \eqref{eq-44}. Thus by the definition of $z$, the equalities \eqref{j-0-3} hold  for  all $j\in \{0, 1, 2, 3\}$.
By Lemma \ref{lem-44}, $z\in (\R_{+}^*)^4$ if and only if
$
\frac{1}{\sqrt{5}}< a < \frac{\sqrt{105}-5}{10}.
$  Now let us analyze the inequalities \eqref{j-ge-4} for $j \ge 4$. For even numbers $j  = 2n$ with $n\ge 2$, we have
\begin{align*}
& \Big\langle \sum_{i=0}^3 z_i t^{i}, t^{2n}\Big\rangle_{L^2([-1,1])}  -  \langle \psi_a, t^{2n}\rangle_{L^2([-1,1])}
\\
&  = \frac{2}{2n+1} \cdot \frac{1}{8} (1 -a)(4-5a-5a^2) + \frac{2}{2n+3} \cdot  \frac{15}{8}a (1-a^2) - \frac{1  - a^{2n+1}}{2n+1}
\\
&  = \frac{(3a -5a^3+2a^{2n+1})n + 3 a^{2n+1}-3a}{(2n+1)(2n+3)}
\end{align*}
and for odd numbers $j = 2n +1$ with $n\ge 2$, we have
\begin{align*}
&\Big\langle \sum_{i=0}^3 z_i t^{i}, t^{2n+1}\Big\rangle_{L^2([-1,1])}  -  \langle \psi_a, t^{2n+1}\rangle_{L^2([-1,1])}
\\
& = \frac{2}{2n+3} \cdot \frac{15}{32} (1 - a^2) (3-7a^2) + \frac{2}{2n+5} \cdot \frac{35}{32}(1 -a^2)(5a^2 -1) - \frac{1  - a^{2n+2}}{2n+2}
\\
&=  \frac{(-3 + 30a^2 -35 a^4 + 8 a^{2n+2})n^2 + (3 -35 a^4 +32 a^{2n+2})n +30 a^{2n+2} -30 a^2 }{2(2n+3)(2n+5)(2n+2)}.
\end{align*}
Therefore, the equality  \eqref{gamma-set-4} holds if and only if the following conditions are all satisfied:
\begin{itemize}
\item $\frac{1}{\sqrt{5}}< a < \frac{\sqrt{105}-5}{10}$;
\item for all integers $n\ge 2$,
\[
(3a -5a^3+2a^{2n+1})n + 3 a^{2n+1}-3a \ge 0;
\]
\item for all integers $n\ge 2$,
\[
(-3 + 30a^2 -35 a^4 + 8 a^{2n+2})n^2 + (3 -35 a^4 +32 a^{2n+2})n +30 a^{2n+2} -30 a^2\ge 0.
\]
\end{itemize}
Thus by Lemma \ref{lem-two-ineq},  the equality  \eqref{gamma-set-4} holds if and only if  $\frac{1}{\sqrt{5}}< a < \frac{\sqrt{105}-5}{10}$.

Finally, by Proposition \ref{prop-det-proj}, the above arguments imply that if $\frac{1}{\sqrt{5}}< a < \frac{\sqrt{105}-5}{10}$, then
\[
P_{\mathcal{A}_{+}} (\psi_a) = \sum_{i=0}^3 z_i t^i.
\]
This is the desired equality \eqref{P-big-a}.
\end{proof}

\section{Appendix}

In this appendix, we show that the set
\[
\mathcal{C}_{L^2([-1,0])}[[ \{t^n\}_{n=0}^\infty]] = \bigg\{\sum_{n=0}^\infty a_nt^n\Big|a_n\geq 0,\text{ the series }\sum_{n=0}^\infty a_nt^n\text{ converges in }L^2([-1,0])\bigg\}
\]
is not closed. Or equivalently, we show that the set
\[
\mathcal{C}_{L^2([0, 1])}[[ \{(-t)^n\}_{n=0}^\infty]] = \bigg\{\sum_{n=0}^\infty a_n (-t)^n\Big|a_n\geq 0,\text{ the series }\sum_{n=0}^\infty a_n(-t)^n\text{ converges in }L^2([0,1])\bigg\}
\]
is not closed.  Indeed, set $\rho_k = 1 - \frac{1}{k+1}$ for any $k\in \N$. Let
\[
g_k(t): =  \frac{1}{(1 + \rho_k t)^2} = \sum_{n=0}^\infty   (n+1)  \rho_k^n (-t)^n, \quad t \in [0, 1).
\]
Then clearly, we have $g_k\in \mathcal{C}_{L^2([0, 1])}[[ \{(-t)^n\}_{n=0}^\infty]] $ and
\[
g_k(t) \xrightarrow[k\to\infty]{\text{in $L^2([0,1])$}}g_\infty(t)= \frac{1}{(1 + t)^2}.
\]
Now let us show that $g_\infty \notin \mathcal{C}_{L^2([0, 1])}[[ \{(-t)^n\}_{n=0}^\infty]]$. Otherwise, $g_\infty \in \mathcal{C}_{L^2([0, 1])}[[ \{(-t)^n\}_{n=0}^\infty]]$, then there exists  a sequence $(a_n)_{n = 0}^\infty$ of non-negative numbers such that
\[
g_\infty = \sum_{n=0}^\infty a_n (-t)^n,
\]
where the equality is understood as
\begin{align}\label{g-cv-series}
\lim_{N\to\infty} \Big \| \sum_{n=0}^N a_n(-t)^n - \frac{1}{(1 + t)^2} \Big\|_{L^2([0,1])} = 0.
\end{align}
The above convergence implies
\begin{align}\label{bdd-an}
 \frac{a_n^2 }{2n + 1}    = \| a_n (-t)^n \|_{L^2([0,1])}^2 \le \sup_n \| a_n (-t)^n \|_{L^2([0,1])}^2  = M<\infty.
\end{align}

Since the $L^2$-norm convergence implies the almost everywhere convergence along a subsequence, \eqref{g-cv-series} implies that, along a subsequence $(N_k)_{k=0}^\infty$ of positive numbers, we have
\[
\lim_{k\to\infty}\sum_{n=0}^{N_k} a_n (-t)^n = \frac{1}{(1 + t)^2} \quad \text{for Lebesgue a.e. $t\in [0,1]$}.
\]
Note that  \eqref{bdd-an} implies that the series $\sum_{n} a_n z^n$ has a radius of convergence not smaller than $1$, hence we have
\[
\lim_{k\to\infty}\sum_{n=0}^{N_k} a_n (-t)^n  =  \sum_{n=0}^{\infty} a_n (-t)^n \quad \text{for all $t \in [0,1)$}.
\]
Therefore, we obtain
\[
 \sum_{n=0}^{\infty} a_n (-t)^n = \frac{1}{(1 + t)^2} \quad \text{for Lebesgue a.e. $t\in [0,1]$}.
\]
By elementary results on analytic funtions, the above equality implies that $a_n =  n+1$ for all $n\in \N$.  Thus the limit relation \eqref{g-cv-series} now reads as
\[
\lim_{N\to\infty} \Big\|  \sum_{n = 0}^{N} (n+1) (-t)^n   - \frac{1}{(1 + t)^2} \Big\|_{L^2([0,1])} = 0.
\]
However, for any large integer $N$ and $t\in (0,1)$, we have
\begin{align*}
 \sum_{n = 0}^{N} (n+1) (-t)^n   - \frac{1}{(1 + t)^2}  = \sum_{n = N+1}^{\infty} (n+1) (-t)^n
 =  \frac{(N+2) (-t)^{N+1}}{1 + t} +  \frac{(-t)^{N+2}}{(1 + t)^2}.
\end{align*}
Since
\[
\Big\|   \frac{(N+2) (-t)^{N+1}}{1 + t}\Big\|_{L^2([0,1])}  \ge \frac{N+2}{2}  \| (-t)^{N+1}\|_{L^2([0,1])} = \frac{N+2}{2 \sqrt{2N+3}}
\]
and
\[
\Big\|  \frac{(-t)^{N+2}}{(1 + t)^2} \Big\|_{L^2([0,1])}  \le 1,
\]
we have
\[
\lim_{N\to\infty}\Big\|  \sum_{n = 0}^{N} (n+1) (-t)^n   - \frac{1}{(1 + t)^2} \Big\|_{L^2([0,1])} \ge  \lim_{N\to\infty}  \Big( \frac{N+2}{2 \sqrt{2N+3}} -1\Big) = \infty.
\]
Thus we obtain a contradiction. Hence  $\mathcal{C}_{L^2([0, 1])}[[ \{(-t)^n\}_{n=0}^\infty]]$ is not closed in $L^2([0,1])$.


\def\cprime{$'$} \def\cydot{\leavevmode\raise.4ex\hbox{.}}

\end{document}